\newtheorem{theorem}{Theorem}[section]
\newtheorem{lemma}[theorem]{Lemma}
\newtheorem{proposition}[theorem]{Proposition}
\newtheorem{corollary}[theorem]{Corollary}
\newtheorem{conjecture}[theorem]{Conjecture}
\newtheorem*{theorem*}{Theorem}
\theoremstyle{definition}
\newtheorem{definition}[theorem]{Definition}
\newtheorem{notation}{Notation}
\newtheorem{remark}[theorem]{Remark}
\newtheorem{exmp}{Example}
\newtheorem*{acknowledgements}{Acknowledgements}
\numberwithin{equation}{section}
\newcommand{\cA}{\mathcal{A}}
\newcommand{\cC}{\mathcal{C}}
\DeclareMathOperator{\cL}{\mathbin{Law}}
\DeclareMathOperator{\cME}{\mathtt{ExInHy}}
\DeclareMathOperator{\cMEH}{\mathtt{ExHier}}
\newcommand{\cK}{\mathcal{K}}
\newcommand{\dha}{d_{\mathbin{haus}}}
\newcommand{\cF}{\mathcal{F}}
\newcommand{\cH}{\mathcal{H}}
\newcommand{\hI}{\mathtt{I}}
\newcommand{\hT}{\mathtt{T}}
\newcommand{\ias}{\overset{a.s.}{\Rightarrow}}
\newcommand{\eas}{\overset{a.s.}{\Leftrightarrow}}
\newcommand{\hH}{\mathtt{H}}
\newcommand{\hP}{\mathtt{P}}
\newcommand{\cM}{\mathcal{M}}
\newcommand{\cB}{\mathcal{B}}
\DeclareMathOperator{\cMI}{\mathtt{ErInPr}}
\DeclareMathOperator{\cMB}{\mathtt{ErBTPr}}
\DeclareMathOperator{\cMS}{\mathtt{ErSTPr}}
\DeclareMathOperator{\cMC}{\mathtt{ExComp}}
\newcommand{\bP}{\mathbb{P}}
\DeclareMathOperator{\bIH}{\mathbin{InHy}}
\DeclareMathOperator{\bH}{\mathbin{Hier}}
\DeclareMathOperator{\bIP}{\mathbin{InPar}}
\DeclareMathOperator{\bIS}{\mathbin{InSy}}
\DeclareMathOperator{\bST}{\mathbin{STree}}
\DeclareMathOperator{\bSTi}{\mathbin{STree}(\infty)}
\DeclareMathOperator{\bBT}{\mathbin{BinTree}}
\DeclareMathOperator{\bBTi}{\mathbin{BinTree}(\infty)}
\DeclareMathOperator{\bISi}{\mathbin{InSy}(\infty)}
\DeclareMathOperator{\bIHH}{\mathbin{IHier}([0,1))}
\newcommand{\bR}{\mathbb{R}}
\newcommand{\cP}{\mathcal{P}}
\newcommand{\cS}{\mathcal{S}}
\newcommand{\cD}{\mathcal{D}}
\newcommand{\bS}{\mathbb{S}}
\newcommand{\vj}{\vec{j}}
\newcommand{\bL}{\mathbb{L}}
\newcommand{\bN}{\mathbb{N}}
\DeclareMathOperator{\PH}{\bIH(\bN)}
\pgfplotsset{compat=1.6}
\DeclareMathOperator{\unif}{\mathbin{unif}}
\DeclareMathOperator{\ex}{\mathbin{erg}}
\DeclareMathOperator{\Bin}{\mathbin{Bin}}
\DeclareMathOperator{\law}{\mathbin{Law}}
\DeclareMathOperator{\inte}{\mathbin{int}}
\DeclareMathOperator{\iso}{\mathbin{iso}}
\DeclareMathOperator{\graph}{\mathbin{graph}}
\newcommand{\diago}[1]{
	\begin{tikzpicture}[#1]
	\draw[color=black, thick] (0,0)--(0.3,0.3);
	\end{tikzpicture}
}
\newcommand{\recto}[1]{
	\begin{tikzpicture}[#1]
	\draw[fill=black!20]  (0,0) -- (0.3,0.3) -- (0,0.3) -- cycle;
	\draw[color=black]  (0,0) -- (0.3,0.3) -- (0,0.3) -- cycle;
	\end{tikzpicture}
}
\author[J. Gerstenberg]{Julian Gerstenberg}
\address{Julian Gerstenberg: Institut f\"ur Mathematische Stochastik, Leibniz Universit\"at Hannover, Welfengarten 1, 30167 Hannover, Germany}
\email{jgerst@stochastik.uni-hannover.de}
\keywords{exchangeability, interval hypergraph, de Finetti-type theorem, poly-adic filtration, simplex, hierarchy, Martin boundary, limits of discrete structures, Schr{\"o}der tree, binary tree, Hausdorff distance}
\subjclass[2010]{Primary 60G09, 60J10; secondary 60J50}
\begin{document}
	
	\title[exchangeable interval hypergraphs]{Exchangeable interval hypergraphs and limits of ordered discrete structures}
	
	\begin{abstract}
		A hypergraph $(V,E)$ is called an interval hypergraph if there exists a linear order $l$ on $V$ such that every edge $e\in E$ is an interval w.r.t. $l$; we also assume that $\{j\}\in E$ for every $j\in V$. Our main result is a de~Finetti-type representation of random exchangeable interval hypergraphs on~$\bN$ (EIHs): the law of every EIH can be obtained by sampling from some random compact subset $K$ of the triangle $\{(x,y):0\leq x\leq y\leq 1\}$ at iid uniform positions $U_1,U_2,\dots$, in the sense that, restricted to the node set $[n]:=\{1,\dots,n\}$ every non-singleton edge is of the form $e=\{i\in[n]:x<U_i<y\}$ for some $(x,y)\in K$. We obtain this result via the study of a related class of stochastic objects: erased-interval processes (EIPs). These are certain transient Markov chains $(I_n,\eta_n)_{n\in\bN}$ such that $I_n$ is an interval hypergraph on $V=[n]$ w.r.t. the usual linear order (called interval system). We present an almost sure representation result for EIPs. Attached to each transient Markov chain is the notion of Martin boundary. The points in the boundary attached to EIPs can be seen as limits of growing interval systems. We obtain a one-to-one correspondence between these limits and compact subsets $K$ of the triangle with $(x,x)\in K$ for all $x\in[0,1]$.
		
		Interval hypergraphs are a generalizations of hierarchies and as a consequence we obtain a representation result for exchangeable hierarchies, which is close to a result of Forman, Haulk and Pitman in \cite{fohapi}. Several ordered discrete structures can be seen as interval systems with additional properties, i.e. Schr{\"o}der trees (rooted, ordered, no node has outdegree one) or even more special: binary trees. We describe limits of Schr{\"o}der trees as certain tree-like compact sets. These can be seen as an ordered counterpart to real trees, which are widely used to describe limits of discrete unordered trees. Considering binary trees we thus obtain a homeomorphic description of the Martin boundary of R\'emy's tree growth chain, which has been analyzed by Evans, Gr{\"u}bel and Wakolbinger in \cite{egw2}. 
	\end{abstract}
	\maketitle

	\section{introduction}\label{sec:INTRO}
	
	The classical de Finetti representation theorem can be stated as follows: \emph{the law of every exchangeable $\{0,1\}$-valued stochastic processes can be expressed as a mixture of laws of iid processes}. More precisely, for any law $P$ of an exchangeable $\{0,1\}$-valued processes there exists a unique Borel probability measure $\mu$ on $[0,1]$ such that $P=\int_{[0,1]}\Bin(1,p)^{\otimes\bN}d\mu(p)$\footnote{\label{foot:2}$\Bin(n,p)$ denotes the binomial distribution and $Q^{\otimes\bN}=Q\otimes Q\otimes\cdots$ is the law of an iid sequence with marginal distribution $Q$.}. De Finetti's theorem has been generalized in many different directions. With the present paper we contribute to the growing list of de Finetti-type representation theorems by studying \emph{exchangeable interval hypergraphs on $\bN$}. Up to now the list of (combinatorial) structures whose attached exchangeability structure have been analyzed includes: sequences, partitions, graphs, general arrays (see \cite{kallenberg}) and more recently, hierarchies by Forman, Haulk and Pitman \cite{fohapi}. Our work provides a generalization of some of their results, since every exchangeable hierarchy on $\bN$ is an exchangeable interval hypergraph on $\bN$ as well. A more complete list concerning exchangeability in combinatorial objects can be found in \cite[Section 1.2.]{fohapi}.
	
	For any set $V$ let $\cP(V)$ be the power set of $V$. A \emph{hypergraph} is a tuple $\hH=(V,E)$ where $V$ is the set of nodes and $E\subseteq \cP(V)$ is the set of edges. We only consider hypergraphs that contain all singletons, i.e. with $\{j\}\in E$ for every $j\in V$. In such hypergraphs, it is not necessary to specify the set of nodes and therefore we can identify the hypergraph $\hH$ with its set of edges $E$. For $n\in\bN=\{1,2,\dots\}$ let $[n]:=\{1,\dots,n\}$. We now introduce the basic combinatorial structure considered in this paper:
	
	\begin{definition}\label{def:IntervalHypergraph}
		Let $n\in\bN$. A set $\hH\subseteq \cP([n])$ is called an \emph{interval hypergraph on $[n]$}, if 
		\begin{enumerate}
			\item[(i)] $\emptyset\in\hH~\text{and}~\{j\}\in\hH~\text{for all}~j\in[n]$,
			\item[(ii)] there exists a linear order $l$ on $[n]$ such that every $e\in \hH$ is an interval\footnote{\label{foot:1}If a linear order $l$ on a set $V$ is given, we write $x~l~y$ if $x$ is w.r.t. $l$ smaller then $y$. An interval with respect to $l$ is a subset $e\subseteq V$ such that for every $x,y\in e$ and $z\in V$ the implication $xlzly\Rightarrow z\in e$ holds.} with respect to $l$.
		\end{enumerate}
		Let $\bIH(n)$ be the set of all interval hypergraphs on $[n]$. If every $e\in\hH$ is an interval w.r.t. $l$, we will say that \emph{$\hH$ is an interval hypergraph w.r.t. $l$}. For any fixed linear order $l$ on $[n]$ let $\bIH(n,l)$ be the set of all interval hypergraphs on $[n]$ w.r.t. $l$. See \cite{Moore} for a slightly different definition and additional material concerning interval hypergraphs and Fig. \ref{fig:inhyexmp} for a visualization.
	\end{definition}

	\begin{wrapfigure}{r}{0.4\textwidth}
		\centering
		\includegraphics[width=0.4\textwidth]{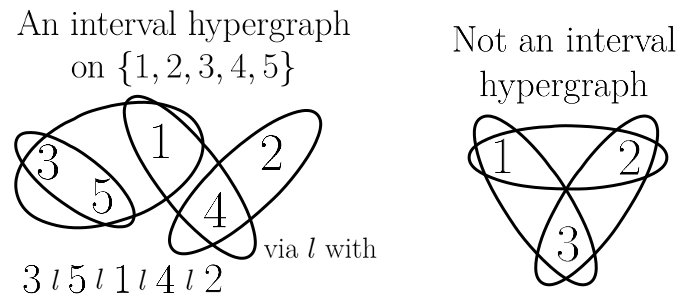}
		\caption{\label{fig:inhyexmp}}
	\end{wrapfigure}

	Next we define interval hypergraphs on $\bN$. Of course, one could replace the set of nodes $[n]$ in Definition \ref{def:IntervalHypergraph} by an arbitrary set $V$ and use the same requirements (i),(ii) to introduce interval hypergraphs on $V$. Instead we define an interval hypergraph on $\bN$ as a \emph{projective sequence of finite interval hypergraphs}, more concretely: Given some $k\leq n$ and some $\hH\in\bIH(n)$ we introduce the \emph{restricted} interval hypergraph
	\begin{equation}\label{eq:restriction}
	\hH_{|k}:=[k]\cap \hH=\left\{\{1,\dots,k\}\cap e:e\in\hH\right\}.
	\end{equation} This really yields an interval hypergraph: if $\hH$ is an interval hypergraph w.r.t. $l$, then $\hH_{|k}$ is an interval hypergraph w.r.t. $l_{|k}$, where $l_{|k}$ is the restriction of the linear order $l$ to the set $[k]$, see Fig. \ref{fig:restrelexmp}.
	\begin{definition}
		A sequence $\hH=(\hH_n)_{n\in\bN}$ such that for every $n$
		\begin{itemize}
			\item[-] $\hH_n\in\bIH(n)$
			\item[-] $\hH_n=(\hH_{n+1})_{|n}$
		\end{itemize}
		is called an \emph{interval hypergraph on $\bN$}. Let $\bIH(\bN)$ be the set of all interval hypergraphs on $\bN$. 
	\end{definition}
	
	\begin{remark}\label{rem:card}
		Consider subsets $\hH',\hH\subseteq \cP(\bN)$ that both satisfy (i) and (ii) as in Definition~\ref{def:IntervalHypergraph} but with $\bN$ instead of $[n]$. It is possible that $\hH\neq \hH'$ but $\hH_{|n}=\hH'_{|n}$ for all $n$, where $\hH_{|n}=\{[n]\cap e:e\in \hH\}$. The set of all $\hH\subseteq\cP(\bN)$ satisfying (i) and (ii) has a cardinality higher than that of the continuum. Thus it can not be equipped with a $\sigma$-field that turns it into a Borel space, which would be a desirable technical feature when dealing with exchangeable random objects. In \cite{fohapi} hierarchies on $\bN$ are defined as projective sequences of finite hierarchies as well. 
	\end{remark}
	
	 A \emph{random interval hypergraph on $\bN$} is a stochastic process $H=(H_n)_{n\in\bN}$ such that $H_n\in\bIH(n)$ and $H_n=(H_{n+1})_{|n}$ almost surely for all $n$. Let $\cL(H)$ be the law of a random interval hypergraph $H$ on $\bN$. To introduce exchangeability we first explain how to relabel interval hypergraphs: Let $\bS_n$ be the group of bijections $\pi:[n]\rightarrow[n]$. The \emph{one-line notation} of $\pi\in\bS_n$ is the vector $(\pi(1),\dots,\pi(n))$ and $\pi^{-1}$ is the \emph{inverse} of $\pi$. Given $\pi\in\bS_n$ and $\hH\in\bIH(n)$ we introduce the \emph{relabeled} interval hypergraph
	 \begin{equation}\label{eq:relabeling}
	 \pi(\hH):=\{\pi(e):e\in\hH\}.
	 \end{equation}
	 This really yields an interval hypergraph: if $\hH$ is an interval hypergraph w.r.t. $l$, then $\pi(\hH)$ is an interval hypergraph w.r.t. $l^{\pi}$, where $il^{\pi}j:\Leftrightarrow \pi^{-1}(i)l\pi^{-1}(j)$, see Fig. \ref{fig:restrelexmp}. Now we can introduce one of our main objects of interest:
	 \begin{definition}\label{def:ExIH}
	 	An \emph{exchangeable interval hypergraph on $\bN$} (EIH) is a random interval hypergraph $H=(H_n)_{n\in\bN}$ on $\bN$ such that all $H_n$ have \emph{exchangeable laws}, i.e. for every $n$ it holds that $\pi(H_n)\overset{\cD}{=}H_n$ for every $\pi\in\bS_n$. Let  
	 	$$\cME=\{\cL(H):\text{$H$ is an exchangeable interval hypergraph on $\bN$}\}$$
	 	be the collection of all possible laws of EIHs.
	 \end{definition}
 
 	We can now state our de Finetti-type representation result for EIHs, ignoring some measurability issues for the moment:
 	\begin{theorem}\label{thm:start}
 		For every exchangeable interval hypergraph $(H_n)_{n\in\bN}$ there exists a random compact set $K\subseteq\{(x,y)\in\bR^2:0\leq x\leq y\leq 1\}$ with $(x,x)\in K$ for every $x\in[0,1]$ such that, given an iid sequence $U_1,U_2,\dots$ of uniform RVs independent of $K$, it holds that $(H_n)_{n\in\bN}$ has the same distribution as $(H'_n)_{n\in\bN}$, where 
 		\begin{equation*}
 		H'_n:=\Big\{\big\{i\in[n]:x<U_i<y\big\}:(x,y)\in K\Big\}\cup\Big\{\{j\}:j\in[n]\Big\}\cup\big\{\emptyset\big\}.
 		\end{equation*}
 	\end{theorem}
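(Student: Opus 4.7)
The plan is to derive Theorem \ref{thm:start} from the Martin boundary representation of erased-interval processes (EIPs) developed earlier in the paper, using an independent iid uniform sequence to bridge the exchangeable setting and the natural-order setting. The forward direction is a sanity check: for any random compact $K\subseteq\{(x,y):0\le x\le y\le 1\}$ containing the diagonal and iid uniforms $(U_i)$ independent of $K$, the rank order of $(U_1,\dots,U_n)$ is a random linear order on $[n]$ with respect to which every set $\{i\in[n]:x<U_i<y\}$ is an interval; the consistency $(H'_{n+1})_{|n}=H'_n$ and the exchangeability of the joint law both follow from the iid property of $(U_i)$ and their independence of $K$.

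For the main direction, given an EIH $H=(H_n)$, I would couple with iid uniforms $(U_i)_{i\in\bN}$ independent of $H$ and exploit exchangeability of $H_n$ to identify the $U$-ranking with a witness order of $H_n$. Concretely, fix a deterministic rule selecting one witness order $l_n(\hH)$ for each $\hH\in\bIH(n)$ (e.g.\ the lexicographically minimal witness order); let $\pi_n\in\bS_n$ be the permutation carrying the natural order $1<2<\cdots<n$ to $l_n(H_n)$ and set $I_n:=\pi_n^{-1}(H_n)$, which then lies in $\bIH(n,l_{\text{nat}})$ and is thus an interval system. Exchangeability of $H_n$ implies that, conditionally on $I_n$, the permutation $\pi_n$ is uniformly distributed among the relabellings of $I_n$ producing a valid copy of $H_n$, and after enlarging the probability space one may identify $\pi_n$ with the $U$-ranking permutation. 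The identity $(H_{n+1})_{|n}=H_n$ then translates, under this coupling, into an interval-erasure dynamics on $(I_n)$, making $(I_n)_{n\in\bN}$ a Markov chain of the EIP type analysed in the body of the paper.

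Now invoke the Martin boundary theorem for EIPs: it furnishes a random compact set $K\subseteq\{(x,y):0\le x\le y\le 1\}$ containing the diagonal, measurable with respect to the tail of $(I_n)$, such that almost surely the non-singleton edges of $I_n$ are asymptotically parametrised by $K$ via the left/right rank-fractions of their endpoints in $[n]$. Translating back through $\pi_n$, the non-singleton edges of $H_n=\pi_n(I_n)$ become precisely $\{i\in[n]:x<U_i<y\}$ for $(x,y)\in K$, which yields the claimed representation. The main obstacle is the middle step: the naive rule ``lex-minimal witness order'' need not commute with restriction $H_{n+1}\mapsto H_n$, so some care is required in choosing the selection rule (or working with an equivalence class of witness orders) to ensure that $(I_n)$ is a genuine Markov chain whose transitions coincide with an EIP and that the coupling between $\pi_n$ and the $U$-ranking is consistent across $n$. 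Once this bridge is laid, Theorem \ref{thm:start} is a direct consequence of the Martin boundary analysis of EIPs.
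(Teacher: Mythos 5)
Your high-level reduction is correct and mirrors the paper: the theorem is deduced from the erased-interval process (EIP) machinery by relating an exchangeable interval hypergraph $H=(H_n)$ to an interval-system-valued Markov chain via a permutation (equivalently $U$-) process, then invoking the almost-sure representation of EIPs (Theorem \ref{thm:mainEIP}) and the formula $\pi(\hI) = \{\{i : x < u_i < y\} : (x,y)\in K\}\cup\dots$ to land on the stated form. Your forward sanity check is also fine.

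However, there is a genuine gap exactly where you flag it, and you do not close it. You propose choosing, for each $n$, a deterministic selection $\hH \mapsto (\pi_{\hH}, \hI_{\hH})$ with $\hH = \pi_{\hH}(\hI_{\hH})$, setting $I_n := \pi_{H_n}^{-1}(H_n)$, and then hoping to choose the rule (or an ``equivalence class of witness orders'') so that $(I_n)$ becomes a bona fide EIP consistently coupled with the $U$-ranking. You explicitly note that the naive (lex-minimal) rule does not commute with restriction, and you leave it at ``some care is required.'' That sentence is the heart of the proof: no such fix is offered, and in fact the paper does \emph{not} produce a consistent selection rule at all. Instead, the paper's proof of Theorem \ref{thm:mainMAP} sidesteps the consistency problem by a compactness argument: for each fixed $n$ it builds an auxiliary process $(I^n_k,\eta_k)_{k\in\bN}$ with $I^n_k := \phi^n_k(\hI_{H_n}, \vj^{S_n}_k)$ for $k\le n$ (and arbitrary for $k>n$), observes that each such law lies in the compact metrizable space $\cM_1\bigl(\prod_k \bIS(k)\times[k+1]\bigr)$, extracts a subsequential limit $L$, and verifies that $L\in\cMI$ (using that the first $n_k$ coordinates of $L_{n_k}$ have co-transition kernel $\theta$) and that $L$ maps to $\cL(H)$ under $\cL((I,\eta))\mapsto\cL(S_n(I_n))$. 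Without this subsequential-limit device (or some replacement for it) your argument cannot be completed: there is no reason a single selection rule should yield the required Markov-with-$\theta$-cotransitions structure across all $n$ simultaneously, and the conditional-uniformity claim about $\pi_n$ given $I_n$ does not by itself force the erasure dynamics $I_n = \phi^{n+1}_n(I_{n+1},\eta_n)$ with $\eta_n$ uniform and independent of the future.
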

 
 	Our aim is not only to describe the laws of EIHs as in Theorem~\ref{thm:start}, but also to describe some topological aspects of the space $\cME$. In particular, $\cME$ is naturally equipped with the structure of a \emph{metrizable Choquet simplex}, i.e. it is a compact convex set in which every point can be expressed as a mixture of extreme points in a unique way. We will not only describe its convexity structure, but also describe its topology. In particular, we will show that $\cME$ is a Bauer simplex, i.e. the extreme points (which are precisely the ergodic exchangeable laws) form a closed set. We will give further details concerning the simplex point of view at the end of this introduction. 
 	
 	\begin{wrapfigure}{r}{0.5\textwidth}
 		\centering
 		\includegraphics[width=0.5\textwidth]{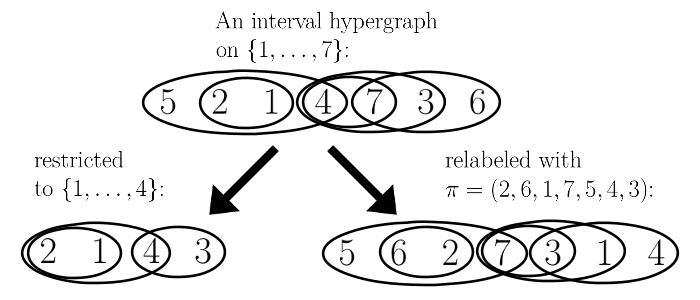}
 		\caption{\label{fig:restrelexmp}}
 	\end{wrapfigure}
 	
	As already mentioned, our main result for EIHs can be seen as an extension of the results concerning exchangeable hierarchies on $\bN$ that are presented in \cite{fohapi}, since every exchangeable hierarchy on $\bN$ is an EIH as well. We explain this connection in detail in Section \ref{sec:connections}. We think that our results for EIHs are interesting, not only because they are a generalization (and a topological refinement) of some results given in \cite{fohapi}, but also because our method of proof is different, as we now explain:
	
	We introduce a different class of stochastic objects, \emph{erased-interval processes} (EIPs), and we present a de Finetti-type representation for these: \emph{not only in law, but almost surely}. We derive our representation result concerning interval hypergraphs from the result for erased-interval processes. Such EIPs are certain transient Markov chains $(I_n,\eta_n)_{n\in\bN}$ where $I_n$ takes values in $\bIH(n,<)$ and $\eta_n$ takes values in $[n+1]$, satisfying a certain dependency structure, which we will define below. We introduce the \emph{Martin boundary attached to erased-interval processes} in Section $5$. The close connection to the work of Evans, Gr{\"u}bel and Wakolbinger stems from the fact that EIPs yield a generalization of \emph{infinite labeled R\'emy bridges} introduced in \cite{egw2}. These authors examined the \emph{Martin boundary of R\'emy's tree growth chain}, which is a transient Markov chain $(T_n)_{n\in\bN}$ such that $T_n$ is a uniform random binary tree with exactly $n$ leaves. To achieve a description of the Martin boundary associated to that particular Markov chain, the authors reduced the problem of describing the boundary to the task of examining the collection of laws of infinite labeled R\'emy bridges. The set of binary trees with $n$ leafs can be considered a subset $\bBT(n)\subsetneq\bIH(n,<)$ and infinite labeled R\'emy Bridges are basically those erased-interval processes $(I_n,\eta_n)_{n\in\bN}$ with the additional property that $I_n\in\bBT(n)$ for every $n$. Our results concerning erased-interval processes can be used to obtain homeomorphic descriptions of certain Martin boundaries and in particular, we present a homeomorphic description of the Martin boundary of R\'emy's tree growth chain. 
	In Section~\ref{sec:connections} we connect the contents of \cite{fohapi} and \cite{egw2} thereby explaining some of their similarities: in both of these works exchangeable random objects were represented by some sort of \emph{sampling from real trees}. We will will follow a different route: Our more general underlying discrete objects (interval hypergraphs and interval systems defined below) no longer bear tree-like structures and so, in general, trees do not appear in the associated representation results. Instead of a tree-like structure to work with, our discrete objects can be embedded into the upper triangle 
	$$\recto{}:=\Big\{(x,y)\in\bR^2:0\leq x\leq y\leq 1\Big\}$$
	in an almost canonical way. Ordered trees later appear as subclasses of interval systems and we obtain results for trees as corollaries to our main theorem concerning EIPs.
	
	Erased-type processes occur in the literature concerning \emph{poly-adic filtrations}, see \cite{laurent2016filtrations} and \cite{gerstenberg}. Certain results in these papers later yield an explanation for why we consider only hypergraphs in which all singleton sets $\{j\}$ are part of the edge set. Our almost-sure representation result for EIPs, as a by-product, also clarifies the isomorphism structure of the poly-adic backward filtrations generated by those processes: such a backward filtration $\cF=(\cF_n)_{n\in\bN}$ is of product-type iff it is Kolmogorovian, i.e. if the terminal $\sigma$-field $\cF_{\infty}=\bigcap_{n\in\bN}\cF_n$ is a.s. trivial.
	
	Recall that we use $<$ for the usual linear order of real numbers. For $n\in\bN$ we call an $\hI\in\bIH(n,<)$ an \emph{interval system} on $[n]$. In particular, every non-empty edge in an interval system is of the form $e=[a,b]:=\{i\in[n]:a\leq i\leq b\}$ for some $1\leq a\leq b\leq n$. Denote the space of all interval systems on $[n]$ by $\bIS(n):=\bIH(n,<)$, so $\bIS(n)\subseteq \bIH(n)$. See Fig. \ref{fig:eipexmp} for some visualizations of interval systems. The relation between interval systems and interval hypergraphs is not just that the former is more special than the latter: for \emph{every} interval hypergraph $\hH$ on $[n]$ there exists some permutation $\pi\in\bS_n$ such that the relabeled interval hypergraph $\pi(\hH)$ is an interval system. This property later allows to transfer our result concerning EIPs to exchangeable interval hypergraphs on $\bN$.
	
	The definition of exchangeable interval hypergraphs on $\bN$ was built upon restricting and relabeling. For erased-interval processes we introduce a different operation: \emph{removing elements from intervals and then relabeling them in a strictly monotone fashion}. Let $n\in\bN$, $[a,b]\subseteq[n+1]$ and $k\in[n+1]$. We define an interval $[a,b]-\{k\}\subseteq[n]$ by
	\begin{equation}\label{eq:intervalrestr}
	[a,b]-\{k\}:=\begin{cases}
	[a-1,b-1],&\text{if}~k<a\leq b\\
	[a,b-1],&\text{if}~a\leq k\leq b,\\
	[a,b],&\text{if}~a\leq b<k.
	\end{cases}
	\end{equation}
	This operation can be lifted to interval systems: If $\hI\in\bIS(n+1)$ is an interval system on $[n+1]$ and $k\in [n+1]$, then $\phi^{n+1}_n(\hI,k)$ is defined by removing $k$ from every $[a,b]\in\hI$ in the sense of (\ref{eq:intervalrestr}). So $\phi^{n+1}_n$ is formally defined as
	\begin{align*}
	\phi^{n+1}_n:\bIS(n+1)\times[n+1]&\longrightarrow\bIS(n),\\
	&\phi^{n+1}_n(\hI,k):=\Big\{[a,b]-\{k\}:[a,b]\in\hI\Big\}\cup\{\emptyset\}.
	\end{align*} 
	
	Now we are ready to introduce erased-interval processes:
	
	\begin{definition}\label{def:EIP}
		An \emph{erased-interval process} is a stochastic process $(I,\eta)=(I_n,\eta_n)_{n\in\bN}$ such that for every $n\in\bN$:
		\begin{enumerate}
			\item[(i)] $(I_n,\eta_n)$ takes values in $\bIS(n)\times[n+1]$ almost surely.
			\item[(ii)] The random variable $\eta_n$ is uniformly distributed on $[n+1]$ and independent of the $\sigma$-field $\cF_{n+1}$, where 
			$$\cF_n:=\sigma\left(I_m,\eta_m:m\geq n\right)$$
			is the $\sigma$-field generated by the future of $(I,\eta)$ after time $n$.
			\item[(iii)] $I_n=\phi^{n+1}_n(I_{n+1},\eta_n)$ almost surely. 
		\end{enumerate}
		Let $\cL((I,\eta))$ be the law of the erased-interval process $(I,\eta)$ and let
		\begin{equation*}
		\cMI:=\left\{\cL((I,\eta)):(I,\eta)~\text{is an erased-interval process}\right\}
		\end{equation*}
		be the space of all possible laws of erased-interval processes. See Fig. \ref{fig:eipexmp} for a visualization of property (iii).
	\end{definition}

	\begin{wrapfigure}{r}{0.46\textwidth}
		\centering
		\includegraphics[width=0.46\textwidth]{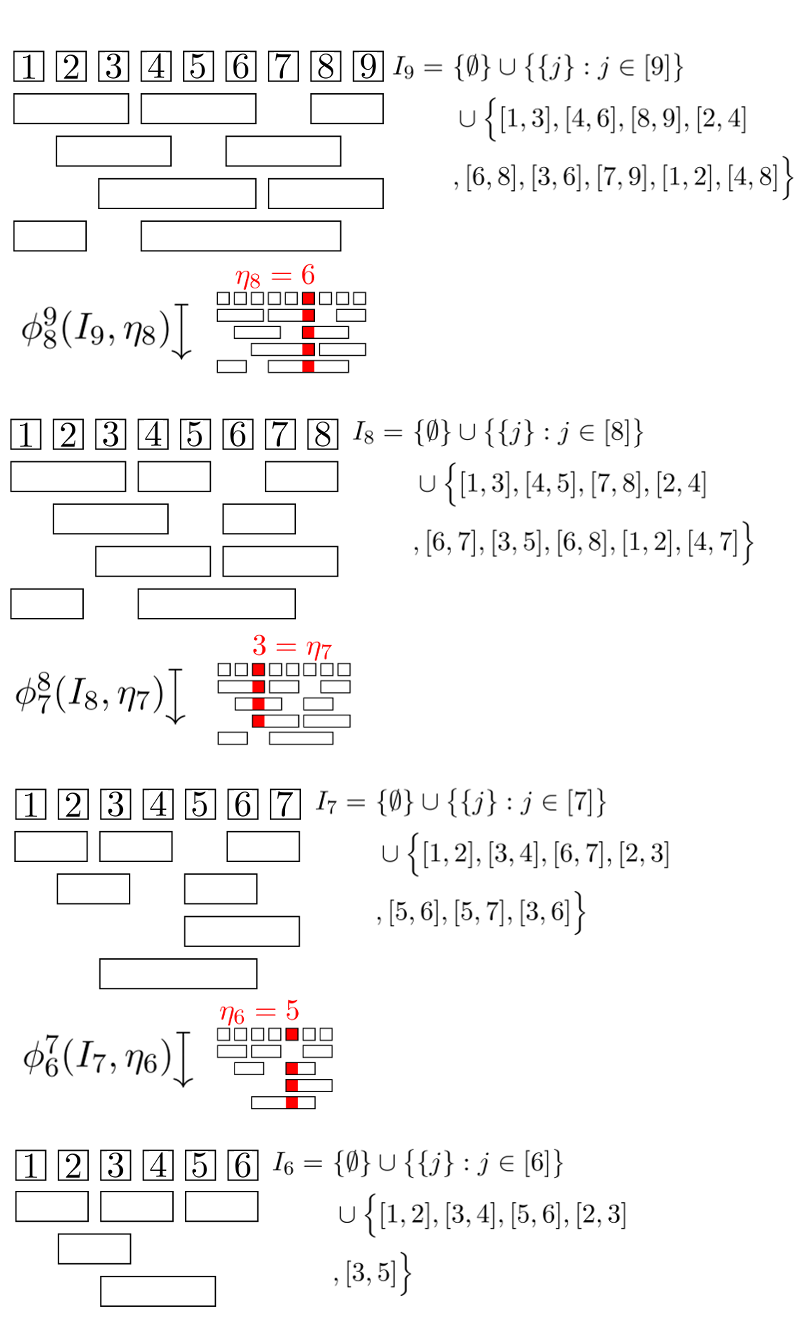}
		\caption{\label{fig:eipexmp}}
	\end{wrapfigure}

	In Section \ref{sec:mainresults} we present a de Finetti-type representation result for EIPs in a very 'strong' form: we not only describe every possible law of EIPs in a certain way, but we express \emph{any given} EIP in a certain \emph{almost sure} way. We also describe the whole space $\cMI$, which like $\cME$ is a metrizable Choquet simplex (see the end of this introduction), up to affine homeomorphism. We will deduce our representation result for exchangeable interval hypergraphs by providing an explicit affine surjective map $\cMI\rightarrow\cME$. 
	
	We think that topological aspects concerning exchangeability and related fields are interesting, since they generalize the following considerations: The exchangeability structure of $\{0,1\}$-valued processes is answered by the famous saying that every such process is a mixture of iid processes (de Finetti's theorem). The latter are parametrized by the set $[0,1]$, where the parameter $p\in[0,1]$ is the success probability. The unit interval can be considered, as usual, as a topological space and this topology is the right one: the map $p\mapsto \Bin(1,p)^{\otimes\bN}$ from the unit interval to the set of all laws of iid $\{0,1\}$-valued processes is not only bijective, but a \emph{homeomorphism}, if the latter space is equipped with its natural topology of weak convergence. This identifies the metrizable Choquet simplex of all laws of exchangeable $\{0,1\}$-valued processes as a Bauer simplex that is affinely homeomorphic to the simplex of Borel probability measures on the unit interval, $\cM_1([0,1])$, equipped with the topology of weak convergence. In the case of $\{0,1\}$-valued processes these topological considerations may not come as a surprise. But in more complex situations, the space $[0,1]$ used to describe the ergodic laws before is replaced by more complex spaces and accurate topological descriptions become a more challenging task.
	
	In Section \ref{sec:proof} we will prove our main theorems. Our proofs differ strongly from the proofs given in \cite{egw2} and \cite{fohapi}, since they both used the tree-like structures present in the discrete objects they considered. Because of that, they were able to introduce some certain indicator sequences or arrays attached to their exchangeable random structures that inherited exchangeability. Then they both used, in some sense, the classical de Finetti theorem to obtain almost sure convergences. Our proofs do not need such results, but only algorithmic features of certain constructions, the Glivenko-Cantelli theorem and a statement concerning the intersection behavior of random rectangles.

	In Section \ref{sec:CUP} we provide a tool which is used in our proof concerning the almost sure representation of EIPs at two crucial points. This tool can loosely be stated as \emph{'some random rectangles intersect a given compact set in its interior or not at all, almost surely'}. This statement is derived from a topological fact concerning the \emph{Sorgenfrey plane}\footnote{\label{foot:3}i.e. the topological space based on $\bR^2$ equipped with the topology generated by rectangles $[a,b)\times[c,d)$.}: Given any subset $A$ of the Sorgenfrey plane the set of isolated points of $A$ is contained in the union of at most countable many strictly decreasing functions. 
	
	In Section \ref{sec:connections} we discuss the relations to the already mentioned papers \cite{fohapi} and \cite{egw2} in detail and point out how to use our results in the situations considered there. We explain the similarities between these works from a structural point of view. We identify Martin boundaries corresponding to interval systems, Schr{\"o}der trees and binary trees. These Martin boundaries can be interpreted as \emph{limits of growing structures}: For $1\leq k\leq n$ and $\hI_k\in\bIS(k),\hI_n\in\bIS(n)$ we will define the \emph{density} of the 'small' object $\hI_k$ in the 'large' object $\hI_n$ by counting all the ways one can \emph{embed} $\hI_k$ in a strictly monotonic way into $\hI_n$. This count then gets divided by the total number of possible embeddings, which is given by $\binom{n}{k}$, so the density of $\hI_k$ in $\hI_n$ is thus some number $\gamma(\hI_k,\hI_n)\in[0,1]$. A (non-random) sequence $(\hI_n)_{n\in\bN}$ with $\hI_n\in\bIS(m_n)$ and $m_n\rightarrow\infty$ is called \emph{$\gamma$-convergent} iff $\gamma(\hI,\hI_n)$ converges for every $\hI\in\bIS(k), k\in\bN$. The function $\hI\mapsto \lim_n\gamma(\hI,\hI_n)$ is then considered to be \emph{the limit} of the $\gamma$-convergent sequence $(\hI_n)_{n\in\bN}$. We will describe the set of all possible limits in a homeomorphic way. Limits of discrete structures in this spirit are closely related to exchangeability in combinatorial objects and very often, these concepts are equivalent, see \cite{dija, austin}. The connection of Martin boundary theory and limits of discrete structures in the case of graphs has been pointed out in \cite{gruebel}. The classical de Finetti theorem has been connected to Martin boundary theory in \cite{GGH}. Further closely related concepts are discussed in \cite{hkmrs} (pattern densities in permutations), \cite{gerstenberg} (subsequence densities in words) and \cite{choi2017doob} (subsequence densities in words in which all letters of the alphabet occur equally often). In Section $5$ we will also briefly consider \emph{compositions}, an additional class of ordered discrete structures embedded in interval systems, and explain how to use our results concerning EIPs to obtain Gnedin's de Finetti-type representation result for \emph{exchangeable composition structures} (see \cite{gnedin}). 
	
	We finish this introduction with three subsections: First we describe the simplex point of view concerning the sets $\cME$ and $\cMI$, next we briefly explain the connection to ordinary (interval) graphs and then we introduce and briefly discuss the \emph{random exchangeable linear order on $\bN$}. The latter statements seem to be 'folklore'. We will relate to them a lot and therefore we think it pays to present them in a condensed form.
	
	\subsection{Choquet simplices}
	
	A metrizable Choquet simplex (just 'simplex' for short) is a metrizable compact convex set $\cM$ in which every point $x\in \cM$ can be expressed in a unique way as a mixture of extreme points. Let $\text{ex}(\cM)\subseteq\cM$ be the set of extreme points of $\cM$. Mixtures of extreme points are directed by Borel probability measures $\mu$ concentrated on $\text{ex}(\cM)$: To every $\mu$ corresponds a unique point $x_{\mu}=\int\nolimits_{\text{ex}(\cM)}y~d\mu(y)$ and the map $\mu\mapsto x_{\mu}$ is an affine surjective and bijective map from $\cM_1(\text{ex}(\cM))$ to $\cM$. We direct the reader to \cite{phelps, glasner2003ergodic, kallenberg} for an introduction and additional material concerning Choquet theory and simplices. Next we explain why and in what sense both $\cME$ and $\cME$ can be seen as simplices.
	
	Consider the already introduced set 
	$$\bIH(\bN)=\{\hH=(\hH_n)_{n\in\bN}:\hH_n\in\bIH(n)~\text{and}~\hH_n=(\hH_{n+1})_{|n}~\text{for all $n$}\}$$
	of interval hypergraphs on $\bN$. The set $\bIH(\bN)$ is a compact subset of the compact metrizable discrete product space $\prod\nolimits_{n}\bIH(n)$. Hence the space of all Borel probability measures on $\bIH(\bN)$, denoted by $\cM_1(\bIH(\bN))$, is a compact metrizable space under the topology of weak convergence. It is easily checked that $\cME\subseteq \cM_1(\bIH(\bN))$ is a compact and convex subset in that space. One can observe that $\cME$ is a simplex by use of some basic ergodic theoretic facts: Denote by $\bS_{\infty}$ the countable amenable group of finite bijections of $\bN$. One can introduce a group action from $\bS_{\infty}$ to $\PH$ such that $\cME$ is precisely the set of $\bS_{\infty}$-invariant laws on $\bIH(\bN)$:  For some $\pi\in\bS_{\infty}$ let $|\pi|:=\min\{n\in\bN:\pi(i)=i~\text{for all}~i\geq n\}$ be the size of $\pi$. Now given some $\hH=(\hH_n)_n\in\bIH(\bN)$ and some $\pi\in\bS_{\infty}$  with $m:=|\pi|$ define $\pi(\hH):=\hH'=(\hH'_n)_n$ by 
	$$\hH'_n:=\{\pi(e):e\in\hH_n\}~\text{for $n\geq m$~~~~~and}~~~~~\hH'_n:=(\hH'_{m})_{|n}~\text{for $n<m$}.$$
	It is easy to see that $\pi(\hH)\in\bIH(\bN)$ and that $\bS_{\infty}\times\bIH(\bN)\rightarrow\bIH(\bN), (\pi,\hH)\mapsto \pi(\hH)$ is a group action of $\bS_{\infty}$ on $\bIH(\bN)$ such that $\hH\mapsto \pi(\hH)$ is a homeomorphism on $\bIH(\bN)$ for every $\pi$. Now $\cME$ are precisely the $\bS_{\infty}$-invariant Borel probability measures on the compact metrizable space $\bIH(\bN)$ and so $\cME$ is a non-empty simplex (see \cite{glasner2003ergodic}, Chapter 4). Moreover, the extreme points of the convex set $\cME$ are precisely the ergodic $\bS_{\infty}$-invariant probability measures on $\PH$, where some $P\in\cME$ is called ergodic iff for every $\bS_{\infty}$-invariant event $E\subseteq \bIH(\bN)$ it holds that $P(E)\in\{0,1\}$. Introduce
	\begin{align*}
	\ex(\cME)&:=\Big\{\cL(H):~\text{$H\in\bIH(\bN)$ has a $\bS_{\infty}$-invariant ergodic law}\Big\}.
	\end{align*}
	The following is a well known fact:
	$$P\in\ex(\cME)~~\Longleftrightarrow~~P~\text{is an extreme point of the convex set}~\cME.$$
	The general theory concerning compact convex sets yields that $\ex(\cME)$ is a $G_{\delta}$-subset of $\cME$. The uniqueness of the well known ergodic decomposition can now be stated in the following explicit form: For every Borel probability measure $\mu$ on $\ex(\cME)$ the \emph{integration of $\mu$}, given by $P_{\mu}$ with
	\begin{align}\label{eq:decomposition}
		P_{\mu}(A):=\int\nolimits_{\ex(\cME)}Q(A)d\mu(Q)~~\text{for every event $A\subseteq \bIH(\bN)$,}
	\end{align}
	is the law of some exchangeable interval hypergraph, so $P_{\mu}\in\cME$ and the map 
	$$\cM_1(\ex(\cME))\rightarrow\cME,~\mu\mapsto P_{\mu}$$
	is an affine continuous bijection. The inverse of $\mu\mapsto P_{\mu}$ is given as follows: For $P\in\cME$ let $\mu^P$ be the law of the conditional distribution of $P$ given the $\bS_{\infty}$-invariant $\sigma$-field (law under $P$ itself). With this it holds that $\mu^P(\ex(\cME))=1, P_{\mu^P}=P$ and $\mu=\mu^{P_{\mu}}$ for every $\mu\in\cM_1(\ex(\cME))$. We will describe the ergodic laws not only as a set but give some insight into the intrinsic topology as well. In particular, we will see that $\ex(\cME)$ is a closed, hence compact, space and by that we identify $\cME$ as a so called \emph{Bauer simplex}. In view of Theorem \ref{thm:start} the ergodic exchangeable interval hypergraphs are precisely those that can be represented by some deterministic compact set $K$.
	
	As it is the case with the space of laws of exchangeable interval hypergraphs on $\bN$, the space $\cMI$ of laws of erased-interval processes is a simplex as well: Every $\cL((I,\eta))\in\cMI$ can be considered as a Borel probability measure on the path space $\prod\nolimits_{n\in\bN}\bIS(n)\times[n+1]$ with its product topology. Now $\cMI$ is a compact convex subset of the space of all Borel probability measures on that path space and in fact, it is a simplex: $\cMI$ is equal to the set of all \emph{Markov laws with prescribed co-transition probabilities} $\theta=(\theta^{n+1}_n,n\in\bN)$, where $\theta^{n+1}_n$ is given by 
	\begin{align}\label{eq:theta}
	\theta^{n+1}_n:\Big(\bIS(n)\times[n+1]\Big)\times&\Big(\bIS(n+1)\times[n+2]\Big)\longrightarrow[0,1],\\
	&\theta\Big((\hI_n,k_n),(\hI_{n+1},k_{n+1})\Big):=\frac{1}{n+1}1_{\{\hI_n\}}\left(\phi^{n+1}_n(\hI_{n+1},k_n)\right) \nonumber.
	\end{align}
	General theory implies that $\cMI$ is a metrizable Choquet simplex (see \cite{vershik2015equipped}) and that some $\cL((I,\eta))\in\cMI$ is an extreme point of the convex set $\cMI$ iff the terminal $\sigma$-field $\cF_{\infty}=\bigcap\nolimits_{n\in\bN}\sigma(I_m,\eta_m:m\geq n)$ generated by $(I,\eta)$ is trivial almost surely, that is every terminal event has probability either zero or one. Introduce: 
	\begin{align*}
	\ex(\cMI)&:=\Big\{\cL((I,\eta)):\text{$(I,\eta)$ is an EIP generating an a.s. trivial terminal $\sigma$-field}\Big\}.
	\end{align*}
	Like above some $P$ is an extreme point of the convex set $\cMI$ iff $P\in\ex(\cMI)$. 
	The ergodic decomposition for $\cMI$ can now be stated in the exact same form as in (\ref{eq:decomposition}) and the corresponding map $\cM_1(\ex(\cMI))\rightarrow \cMI, \mu\mapsto P_{\mu}$ is again continuous affine and bijective. We show that $\ex(\cMI)$ is compact by providing an explicit homeomorphism to a compact metric space. 
	
	\subsection{Connections to (interval) graphs} A graph $(V,E)$ is called an interval graph, if there exists a collection of intervals $I_v,v\in V$ with $I_v\subseteq (0,1)$ such that there is an edge  $\{i,j\}\in E$ iff $I_j\cap I_i\neq \emptyset$. In \cite{dhj} \emph{interval graph limits} have been studied. The upper triangle $\{(x,y):0\leq x\leq y\leq 1\}$ also appears in \cite{dhj} due to an identification of points and intervals. Nevertheless, the questions and answers concerning interval graph limits and exchangeable interval hypergraphs do not seem to overlap very much. This can be seen in the way one measures the sizes of the discrete objects: With interval hypergraphs the size equals the number of atoms, whereas in interval graphs the size equals the total number of intervals. 
	
	However, the set $\bIS(n)$ of interval systems on $[n]$ has a cardinality of precisely $2^{\binom{n}{2}}$ and thus we could choose bijections to ordinary graphs and think of erased-interval processes $(I_n,\eta_n)_{n\in\bN}$ as stochastic processes, where $(I_n)_{n\in\bN}$ is a sequence of randomly growing graphs. One possible bijection is given as follows: For $\hI\in\bIS(n)$ we define the graph $G_{\hI}$ on the set of nodes $[n]$ in which $\{a,b\}$ is an edge iff $[a,b]\in\hI$. The map $\hI\mapsto G_{\hI}$ clearly establishes a bijection from $\bIS(n)$ to the set of graphs on $[n]$. However, the operations we perform on interval systems (applying $\phi$) are (at least to us) unnatural when interpreting them as operations on graphs in the previously explained way. Therefore, we will not talk about ordinary graphs in the sequel any more. 
	
	\subsection{The exchangeable linear order} Consider the set $\bL$ of all linear orders on $\bN$. Given some $l\in\bL$ and some $n\in\bN$ let $l_{|n}$ be the restriction of $l$ to the set $[n]$. Equip $\bL$ with the $\sigma$-field generated by all these restriction maps. One defines a group action from $\bS_{\infty}$ to $\bL$ in the following way: If $l\in\bL$ and $\pi\in\bS_{\infty}$ then $\pi(l)\in\bL$ is defined by 
	$$i~\pi(l)~j~~:\Longleftrightarrow~~\pi^{-1}(i)~l~\pi^{-1}(j)~~~~\text{for all}~i,j\in\bN.$$
	 Let $L$ be a random linear order on $\bN$ with an exchangeable law, that is $\pi(L)\overset{\cD}{=}L$ for every $\pi\in\bS_{\infty}$. The law of such an object is unique, for every $n\in\bN$ the restriction $L_{|n}$ is uniform on the finite set of all possible linear orders on $[n]$. Such an exchangeable linear order $L$ naturally occurs in the context of exchangeability, but often not directly in the form of a linear order: There are other types of stochastic objects that are in some sense equivalent to an exchangeable linear order. We now introduce some notations that are needed throughout the whole paper. 
	\begin{notation}\label{notation:continous}
		For $k\in\bN$ we define
		\begin{enumerate}
			\item[-] The set $[0,1]^k_{\neq}$ consisting of all $(u_1,\dots,u_k)\in[0,1]^k$ with $u_i\neq u_j$ for all $i\neq j$. 
			\item[-] The set $[0,1]^k_{<}$ consisting of all $(u_1,\dots,u_k)\in[0,1]^k$ that are strictly increasing, so $0\leq u_1<u_2<\dots <u_{k-1}<u_k\leq 1$.
			\item[-] Given some $(u_1,\dots,u_k)\in[0,1]^k_<$ define $u_0:=-1$ and $u_{k+1}:=2$ (to avoid unpleasant case studies in some of the following definitions). 
			\item[-] For $(u_1,\dots,u_k)\in[0,1]^k_{\neq}$ let $\pi:=\pi(u_1,\dots,u_k)\in\bS_k$ be the unique permutation of $[k]$ such that $u_{\pi(1)}<\dots <u_{\pi(k)}$. Define $u_{i:k}:=u_{\pi(i)}$. In particular, $u_{0:k}=-1$ and $u_{k+1:k}=2$.
		\end{enumerate}
	\end{notation}
	
	\begin{definition}\label{def:processes}
		We define three types of stochastic processes indexed by $\bN$:
		\begin{itemize}
			\item A process $U=(U_i)_{i\in\bN}$ such that
			\begin{itemize}
				\item $U_1,U_2,\dots$ are independent identically distributed,
				\item each $U_i$ is uniform on the unit interval, $U_i\sim\unif([0,1])$,
			\end{itemize}
			is called an \emph{$U$-process}.
			\item A process $S=(S_n)_{n\in\bN}$ such that  for each $n$
			\begin{itemize}
				\item $S_n$ is a uniform random permutation of $[n]$, so $S_n\sim\unif(\bS_n)$,
				\item the one-line notation of $S_n$ is almost surely obtained from $S_{n+1}$ by erasing '$n+1$' in the one-line notation of $S_{n+1}$
			\end{itemize}
			is called a \emph{permutation process}.
			\item A process $\eta=(\eta_n)_{n\in\bN}$ such that 
			\begin{itemize}
				\item $\eta_1,\eta_2,\dots$ are independent,
				\item $\eta_n$ is uniformly distributed on the finite set $[n+1]=\{1,\dots,n+1\}$ for each $n$, 
			\end{itemize}
			is called an \emph{eraser process}.
		\end{itemize}
	\end{definition}

	If $(I,\eta)$ is an erased-interval process, then $\eta$ is an eraser process. Next we are going to explain in what sense the four introduced objects -- exchangeable linear order $L$, $U$-process, permutation process $S$, eraser process $\eta$ -- can be considered to be equivalent, that is given any one of the four types of stochastic objects one can pass to any other in an almost surely defined functional way without loosing probabilistic information:
	
	\underline{$U\rightarrow L$:}~~Given some $U$-process $U=(U_i)_{i\in\bN}$ one can define a random linear order $L$ on $\bN$ by $$i~L~j:\Longleftrightarrow U_i<U_j.$$ This random linear order is exchangeable by the exchangeability of $U$. 
	
	\underline{$L\rightarrow U$:}~~Given some exchangeable linear order $L$ and some $i\in\bN$, the previously introduced explicit construction of $L$ directly yields that the limit $$U_i=\lim\limits_{n\rightarrow\infty}\frac{\#\{k\in[n]:k~L~i\}}{n}$$ exists almost surely for all $i$ and yields an $U$-process $U=(U_i)_{i\in\bN}$. 
		
	\underline{$U\rightarrow S$:}~~Given some $U$-process and some $n\in\bN$ define the random permutation
	$$S_n:=\pi(U_1,\dots,U_n)$$ of $[n]$ that arranges the first $n$ $U$-values in increasing order. $S=(S_n)_{n\in\bN}$ is a permutation process, which follows from the exchangeability of $U$ and from the algorithmic construction. 
	
	\underline{$S\rightarrow U$:}~~Given some permutation process $S$ and some $i\in\bN$ the limit
	$$U_i=\lim\limits_{n\rightarrow\infty}\frac{S_n^{-1}(i)}{n}$$ exists almost surely and these limits form an $U$-process $U=(U_i)_{i\in\bN}$. This can be seen by representing $S$ as the permutation process corresponding to some $U$-process $U'$. One obtains $S_n^{-1}(i)=\#\{j\in[n]:U'_j\leq U'_i\}$. The strong law of large numbers yields $U'_i=\lim_nn^{-1}S_n^{-1}(i)$ almost surely.
	
	\underline{$S\rightarrow \eta$:}~~If $S$ is a permutation process then $\eta=(\eta_n)_{n\in\bN}$ with 
	$$\eta_n:=S_{n+1}^{-1}(n+1)$$ is an eraser process. This is due to the fact that $S_n$ is a uniform on $\bS_n$ and $\#\bS_{n+1}=(n+1)!=(n+1)\cdot \#\bS_{n}$.
	
	\underline{$\eta\rightarrow S$:}~~We introduce, for every $n\geq 2$, the bijection 
	\begin{equation}\label{eq:algperm}
	b_n:[2]\times[3]\times\cdots\times[n]\rightarrow\bS_n,
	\end{equation}
	where $b_n$ is defined inductively: $b_1$ is the unique permutation of $[1]$ and the one-line notation of $\pi=b_n(i_1,\dots,i_{n-1})$ is obtained from the one-line notation of $\pi'=b_{n-1}(i_1,\dots,i_{n-2})$ by placing '$n$' in the $i_{n-1}$-th gap of $\pi'=(~\framebox(5,5){}~\pi'(1)~\framebox(5,5){}~\pi'(2)~\framebox(5,5){}~\cdots~\framebox(5,5){}~\pi'(n-1)~\framebox(5,5){}~)$. Now given some eraser process $\eta$ we define $S_1:=(1)$ and for $n\geq 2$
	$$S_n:=b_n(\eta_1,\dots,\eta_{n-1}).$$ 
	$S=(S_n)_{n\in\bN}$ is a permutation process.

	One easily sees that $U\rightarrow L$ and $L\rightarrow U'$ implies $U=U'$ almost surely. The same holds for all other constructions: $A\rightarrow B$ and $B\rightarrow A'$ implies $A=A'$ almost surely in all possible situations defined above. One can connect the previously defined constructions to pass from any of the four objects to any other in an almost surely uniquely defined way. These can be expressed explicit:
	
	\underline{$L\rightarrow S$:}~~ For any $n\in\bN$ there is a unique random bijection $S_n$ of $[n]$ such that $iLj\Leftrightarrow S_n^{-1}(i)<S_n^{-1}(j)$ for all $i,j\in[n]$. The process $S=(S_n)_{n\in\bN}$ is a permutation process and is exactly the result of $L\rightarrow U\rightarrow S$.  
	
	\underline{$S\rightarrow L$:}~~ For any $i,j\in\bN$ the define random relation $L$ by $iLj:\Leftrightarrow S_n^{-1}(i)<S_n^{-1}(j)$ with $n=\max\{i,j\}$. One can show that $L$ is the exchangeable order and is exactly the result of $S\rightarrow U\rightarrow L$.  
		
	\underline{$U\rightarrow \eta$:} For $k\in\bN$ let $\eta_k$ be the relative rank of $U_{k+1}$ in the first $k+1$ $U$-values, so 
	$$\eta_k=\#\{i\in[k+1]:U_i\leq U_{k+1}\}.$$
	This $\eta$ is the result of $U\rightarrow S\rightarrow \eta$.
	
	\underline{$L\rightarrow \eta$:} For $k\in\bN$ let 
	$$\eta_k:=1+\#\{i\in[k]:i L k+1\}.$$
	This $\eta$ is the result of $L\rightarrow U\rightarrow \eta$.
	
	The missing two relations $\eta\rightarrow U$ and $\eta\rightarrow L$ are best understand by passing to $S$ first. If one starts with any of the four objects under consideration, there are almost surely uniquely defined objects of the other three types. We will relate to them as \emph{corresponding objects}. Fig. \ref{fig:eraserprocess} shows the first steps of some realization of a corresponding triple $(U,S,\eta)$. 
	
	\begin{wrapfigure}{r}{0.4\textwidth}
		\centering
		\includegraphics[width=0.4\textwidth]{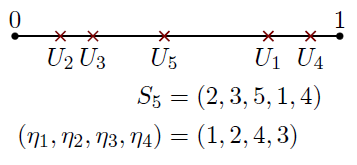}
		\caption{\label{fig:eraserprocess}}
	\end{wrapfigure}

	In particular, for any erased-interval process $(I,\eta)$ there are an $U$-process and a permutation process both corresponding to the eraser process $\eta$ and thus defined on the same probability space as $(I,\eta)$. These processes will play an important role in our representation result, since the corresponding $U$-process serves as the randomization used to \emph{sample from infinity} and the permutation process $S$ is used to pass from $(I,\eta)$ to an exchangeable interval hypergraph. 

	\section{main results}\label{sec:mainresults}
	
	Our first main theorem will be the characterization of erased-interval processes. At first we introduce a compact metric space that turns out to be homeomorphic to the space of ergodic EIPs, that is $\ex(\cMI)$. The elements of this space are \emph{limits of scaled interval systems as $n\rightarrow\infty$}. We need to recall some topological definitions: Given any metric space $(M,d)$ we introduce 
	$$\cK(M)=\{\text{all non-empty compact subsets of $M$}\}.$$ 
	On $\cK(M)$ we will consider the \emph{Hausdorff distance} defined by 
	\begin{equation}\label{def:hausdorffmetric}
	\dha\left(K_1,K_2\right):=\max\{\max\{d(x,K_2):x\in K_1\},\max\{d(x,K_1):x\in K_2\}\}.
	\end{equation}
	A well known fact is that if $(M,d)$ is a compact metric space, then so is $(\cK(M),\dha)$ (see \cite[Chapter 7]{burago}). If we talk about random compact sets, we always mean random variables taking values in the space $\cK(M)$ equipped with the Borel $\sigma$-field corresponding to $\dha$. We need the following characterization of convergence in $(\cK(M),\dha)$, see \cite[Exercise 7.3.4]{burago}:
	\begin{lemma}\label{lemma:convhausdorff}
		Let $(M,d)$ be a compact metric space and $K_n,K\in\cK(M)$ be such that $\dha(K_n,K)\rightarrow 0$. Then for every $x\in K$ there exists a sequence $(x_n)$ such that $x_n\in K_n$ for every $n$ and $d(x_n,x)\rightarrow 0$. If $(x_n)$ is a sequence with $x_n\in K_n$ for every $n$ and $d(x_n,x)\rightarrow 0$ for some $x\in M$, then $x\in K$. 
	\end{lemma}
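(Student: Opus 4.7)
The plan is to verify each of the two implications directly from the definition (\ref{def:hausdorffmetric}); the ambient compactness of $(M,d)$ is not actually required for the argument, only the non-emptiness and compactness of the individual sets $K_n$ and $K$.

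For the first implication, I would fix $x \in K$ and use that each $K_n$ is a non-empty compact set: the continuous map $y \mapsto d(x,y)$ attains its infimum on $K_n$, so there exists $x_n \in K_n$ with $d(x,x_n) = d(x, K_n)$. Since $x \in K$, the definition of $\dha$ yields $d(x, K_n) \leq \max_{z \in K} d(z, K_n) \leq \dha(K_n, K)$, which converges to $0$ by hypothesis. Hence $d(x_n,x) \to 0$, giving the desired sequence.

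For the second implication, assume $x_n \in K_n$ and $d(x_n,x) \to 0$. I would estimate, via the triangle inequality, $d(x,K) \leq d(x,x_n) + d(x_n,K)$, where $d(x_n,K) \leq \max_{y \in K_n} d(y,K) \leq \dha(K_n, K)$ by definition. Both summands tend to zero, so $d(x,K) = 0$; since $K$ is compact and therefore closed, this forces $x \in K$.

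There is no substantive obstacle: the statement is a direct unpacking of the definition of $\dha$, which is why the paper defers to \cite[Exercise 7.3.4]{burago}. The only subtlety worth flagging is the attainment of $d(x,K_n)$ by an actual element of $K_n$, guaranteed by compactness of $K_n$, which makes the construction of the approximating sequence canonical rather than merely $\varepsilon$-approximate.
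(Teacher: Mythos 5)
Your proof is correct and is the standard direct unpacking of the Hausdorff-distance definition; the paper itself does not supply a proof but cites \cite[Exercise 7.3.4]{burago}, so there is no in-paper argument to compare against, and your two-step verification (attainment of $d(x,K_n)$ on the compact $K_n$, then the triangle inequality $d(x,K)\leq d(x,x_n)+d(x_n,K)$ combined with closedness of $K$) is exactly what that exercise expects. Your side remark that compactness of the ambient $M$ is not needed, only non-emptiness and compactness of $K_n$ and $K$, is accurate.
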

	
	A major point for the intuition behind our constructions is that one can identify any open subinterval of the open unit interval $(0,1)$, which is a \emph{set} of the form $(x,y)=\{z\in(0,1):x<z<y\}$ with the \emph{point} in $\bR^2$ whose coordinates are given by the end points of that interval, $(x,y)\in\bR^2$ (note the present overloading of symbols). Consider the \emph{triangle} $\recto{}=\big\{(x,y)\in\bR^2:0\leq x\leq y\leq 1\big\}$ introduced before and let
	$$\diago{}:=\big\{(x,x)\in\bR^2:0\leq x\leq 1\big\}$$
	be the diagonal line from $(0,0)$ to $(1,1)$. Consider the metric induced by the $1$-norm on $\bR^2$, so $d((x_1,y_1),(x_2,y_2)):=|x_1-x_2|+|y_1-y_2|$. Define
	\begin{equation}
	\bISi:=\big\{K\subseteq\recto{}:~\text{$K$ is compact and}~\diago{}\subseteq K\big\}.
	\end{equation}
	The following lemma is very easy to prove, but nevertheless of great importance later on.
	\begin{lemma}
		$\bISi$ is a closed subset of $\cK(\recto{})$ and $(\bISi,\dha)$ is a compact metric space. In particular, $(\bISi,\dha)$ is complete. 
	\end{lemma}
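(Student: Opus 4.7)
The plan is to observe that this is essentially a one-step closedness argument. Once $\bISi$ is shown to be closed in the ambient space $\cK(\recto{})$, compactness and completeness follow immediately: the triangle $\recto{}$ is a closed bounded subset of $\bR^2$, hence compact, so by the cited fact (Burago, Chapter 7) $(\cK(\recto{}),\dha)$ is a compact metric space; any closed subset of a compact metric space is itself compact, and any compact metric space is complete.

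The content of the argument is therefore to verify closedness. First I would fix a sequence $(K_n)_{n\in\bN}$ in $\bISi$ together with $K\in\cK(\recto{})$ such that $\dha(K_n,K)\to 0$, and check that $K\in\bISi$. The condition $K\subseteq\recto{}$ is automatic from $K\in\cK(\recto{})$, so the only thing to verify is $\diago{}\subseteq K$.

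For this, I would fix an arbitrary $x\in\diago{}$ and use the hypothesis $\diago{}\subseteq K_n$ for every $n$ to conclude $x\in K_n$ for every $n$. Then the constant sequence $x_n:=x$ satisfies $x_n\in K_n$ and $d(x_n,x)=0\to 0$, so by the second assertion of Lemma \ref{lemma:convhausdorff} we get $x\in K$. Since $x\in\diago{}$ was arbitrary, $\diago{}\subseteq K$ and hence $K\in\bISi$.

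No serious obstacle is expected here; the whole point of the lemma is to record the stability of the inclusion $\diago{}\subseteq K$ under Hausdorff limits, and the second half of Lemma \ref{lemma:convhausdorff} is tailored for exactly this kind of check. The mild care needed is only to notice that the constant sequence trivially lies in each $K_n$ (which is where $\diago{}\subseteq K_n$ is used) and that completeness is not an independent claim but a consequence of compactness.
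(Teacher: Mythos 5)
Your proof is correct and follows the same route as the paper: show $\bISi$ is closed in $(\cK(\recto{}),\dha)$ via the constant-sequence argument and Lemma \ref{lemma:convhausdorff}, then deduce compactness and completeness from the general facts that closed subsets of compact metric spaces are compact and compact metric spaces are complete. Your write-up is slightly more explicit about the constant sequence, but the idea is identical to the paper's.
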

	\begin{proof}
		We only need to prove the first statement, since $(\cK(\recto{}),\dha)$ is known to be a compact metric space and closed subspaces of compact metric spaces are compact. Furthermore, every compact metric space is complete. So let $K_n\in \bISi$ and $K\in\cK(\recto{})$ be such that $\dha(K_n,K)\rightarrow 0$. Since $(x,x)\in K_n$ for every $x\in[0,1]$ and every $n\in\bN$, Lemma \ref{lemma:convhausdorff} yields that $(x,x)\in K$. Hence $\diago{}\subseteq K$ and $K\in\bISi$. 
	\end{proof}
	The space $(\bISi,\dha)$ will turn out to be homeomorphic to $\ex(\cMI)$. As the notation indicates, $\bISi$ can be seen, in various ways, as the analogue for interval systems $\bIS(n)$ as $n\rightarrow\infty$. Our main theorem says that \emph{every ergodic erased-interval process can be obtained by sampling from some unique $K\in\bISi$, even in a homeomorphic way}. We will now present the map that describes this 'sampling from infinity':

	Let $k\in\bN$ and define 
	\begin{align*}
	\phi^{\infty}_k:&\bIS(\infty)\times [0,1]^k_<\longrightarrow\bIS(k),\\
	&\phi^{\infty}_k(K,u_1,\dots,u_k):=\Bigg\{[a,b]:\begin{aligned}
	&1\leq a\leq b\leq k~\text{s.t. exists $(x,y)\in K$ with}\\
	&u_{a-1}<x<u_a\leq u_b<y<u_{b+1}\\
	\end{aligned}\Bigg\}\\
	&~~~~~~~~~~~~~~~~~~~~~~~~~~~~\bigcup\Big\{\{j\}:j\in[k]\Big\}\bigcup\Big\{\emptyset\Big\}.
	\end{align*}
	
	So let $K\in\bISi, (u_1,\dots,u_k)\in [0,1]^k_<$ and $1\leq a<b\leq k$. One directly obtains the following very useful description of $\phi^{\infty}_k$:
	\begin{equation}\label{eq:samplingREP}
	[a,b]\in\phi^{\infty}_k(K,u_1,\dots,u_k)~\Longleftrightarrow~K\cap (u_{a-1},u_a)\times(u_{b},u_{b+1})\neq\emptyset.
	\end{equation}
	
	One may wish to take a look at Fig.\ref{fig:phiinfty}.

\begin{figure}[h!]
	\centering
	\includegraphics[width=1.05\textwidth]{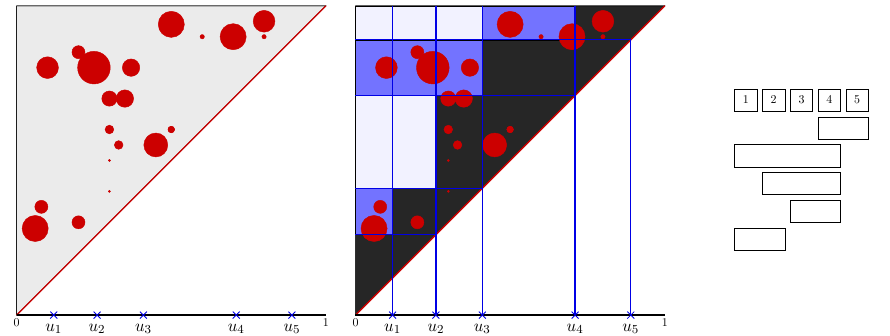}
	\caption{On the left a compact set $\color{red!80!black}K\color{black}\in\bISi$ and some points $(\color{blue!90!black}u_1\color{black},\color{blue!90!black}u_2\color{black},\color{blue!90!black}u_3\color{black},\color{blue!90!black}u_4\color{black},\color{blue!90!black}u_5\color{black})\in [0,1]^5_<$. These $5$ points divide the upper triangle into parts of which $\binom{5}{2}$ are relevant to the definition of $\phi^{\infty}_5(\color{red!80!black}K\color{black},\color{blue!90!black}u_1\color{black},\color{blue!90!black}u_2\color{black},\color{blue!90!black}u_3\color{black},\color{blue!90!black}u_4\color{black},\color{blue!90!black}u_5\color{black})\in\bIS(5)$. The latter can be seen on the right: an interval $[a,b]$ with $1\leq a<b\leq 5$ is present in the induced interval system iff the compact set $\color{red!80!black}K\color{black}$ intersects the the open rectangle $\color{blue!90!black}(u_{a-1},u_a)\times(u_{b},u_{b+1})\color{black}$, here again $\color{blue!90!black}u_0\color{black}:=-1$ and $\color{blue!90!black}u_6\color{black}:=2$.}
	\label{fig:phiinfty}
\end{figure}

	\begin{remark}
		Let $(u_1,\dots,u_k)\in[0,1]^k_<$. We used the conventions $u_0:=-1$ and $u_{k+1}:=2$ because we want points $(x,y)\in K\in\bISi$ with $x=0$ or $y=1$ to eventually have an effect on $\phi^{\infty}_k(K,u_1,\dots,u_k)$. Since we work with open rectangles (see (\ref{eq:samplingREP})) the choices $u_0:=0$ and $u_{k+1}:=1$ would have failed to achieve this. 
	\end{remark}
	
	We will prove that the map $\phi^{\infty}_k$ is measurable for every $k$ with respect to the Borel $\sigma$-field on $\bISi\times[0,1]^k_<$. Thus one can plug in random elements and obtain $\bIS(k)$-valued random elements. In particular, we will plug in the order statistics $(U_{1:k},\dots,U_{k:k})$ obtained from the $U$-processes $(U_i)_{i\in\bN}$ corresponding to an eraser process $\eta$ which stems from an erased-interval process $(I,\eta)$. 
	
	Let $(U_i)_{i\in\bN}$ be an $U$-process and let $\eta$ be the eraser process corresponding to $U$. We will prove that for every $K\in\bISi$ the stochastic process 
	\begin{equation}\label{def:sampledEIP}
	\Big(\phi^{\infty}_n(K,U_{1:n},\dots,U_{n:n}),\eta_n\Big)_{n\in\bN}
	\end{equation}
	is an ergodic erased-interval process and that \emph{every} ergodic erased-interval process is of this form; \emph{not only in law but almost surely}. Denote the law of the process in (\ref{def:sampledEIP}) with $\law(K)$. So in particular, we will show that $\law(K)\in\ex(\cMI)$ for every $K\in\bISi$. To prove this almost sure representation we need to explain how to obtain an appropriate compact subset $K\in\bISi$ when given an erased-interval process $(I,\eta)$. This desired interval system is obtained by scaling  $I_n$ and then letting $n\rightarrow\infty$. We now introduce this scaling procedure. For $n\in\bN$ and $\hI\in\bIS(n)$ let
	\begin{equation}
		n^{-1}\hI:=\Bigg\{\Big(\frac{a-1}{n},\frac{b}{n}\Big):\emptyset\neq[a,b]\in\hI\Bigg\}\cup\diago{}.
	\end{equation}
	In particular, $n^{-1}\hI\subset \recto{}$ and $\diago{}\subseteq n^{-1}\hI$ be definition and so $n^{-1}\hI\in\bISi$, since $n^{-1}\hI$ is obviously compact. See Fig. \ref{fig:scaling} for a visualization.
	\begin{figure}[h!]
		\centering
		\includegraphics[width=0.75\textwidth]{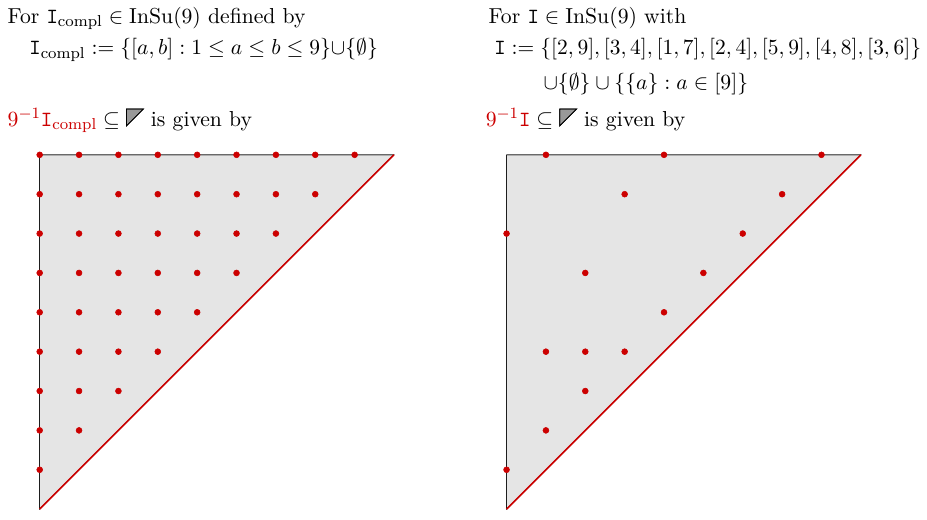}
		\caption{\label{fig:scaling}}
	\end{figure}
	We will prove that $n^{-1}I_n$ converges almost surely in the space $(\bISi,\dha)$ for every erased-interval process $(I_n,\eta_n)_{n\in\bN}$. 
	
	We are now ready to state our main theorems.
	\begin{theorem}\label{thm:mainEIP}
		For every $K\in\bISi$ one has $\law(K)\in\ex(\cMI)$ and the map $\bISi\rightarrow\ex(\cMI),K\mapsto \law(K)$ is a homeomorphism. One has the following almost sure representation: Let $(I,\eta)=(I_n,\eta_n)_{n\in\bN}$ be an erased-interval process. Then $n^{-1}I_n$ converges almost surely as $n\rightarrow\infty$ towards some $\bISi$-valued random variable $I_{\infty}$. Let $U=(U_i)_{i\in\bN}$ be the $U$-process corresponding to $\eta$. Then $I_{\infty}$ and $U$ are independent and one has the equality of processes
		\begin{equation*}
		(I_n,\eta_n)_{n\in\bN}~=~\Big(\phi^{\infty}_n(I_{\infty},U_{1:n},\dots,U_{n:n}),\eta_n\Big)_{n\in\bN}~~\text{almost surely.}
		\end{equation*}
		In particular, for every erased-interval process $(I,\eta)$ the conditional law of $(I,\eta)$ given the terminal $\sigma$-field $\cF_{\infty}$ is $\law(I_{\infty})$ almost surely and $I_{\infty}$ generates $\cF_{\infty}$ almost surely.
	\end{theorem}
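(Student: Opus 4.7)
The plan is to establish the theorem in two complementary directions: (a) for each $K \in \bISi$, verify that $\law(K) \in \ex(\cMI)$; (b) for an arbitrary EIP $(I,\eta)$, construct the limit $I_\infty$ and establish the almost sure representation. Direction (a) is largely computational: given $K$ and a $U$-process $U$ with corresponding eraser process $\eta$, set $J_n := \phi^{\infty}_n(K, U_{1:n}, \ldots, U_{n:n})$ and verify that $(J_n,\eta_n)_{n}$ meets Definition~\ref{def:EIP}. The crucial check is the coherence $J_n = \phi^{n+1}_n(J_{n+1}, \eta_n)$; by the rectangle characterization (\ref{eq:samplingREP}) this reduces to the elementary geometric fact that removing $U_{n+1}=U_{\eta_n:n+1}$ from the order statistics merges two adjacent open rectangles at level $n+1$ into one at level $n$, while leaving the others unchanged up to index shifts. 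The independence structure of $\eta_n$ follows from independence of the $U_i$'s.

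For direction (b), fix an EIP $(I,\eta)$ with corresponding $U$-process. The central step is almost sure convergence of $n^{-1}I_n$ in $(\bISi,\dha)$. I would introduce the auxiliary compact sets
\[
\tilde I_n := \bigl\{(U_{a-1:n},U_{b:n}) : \emptyset\neq[a,b]\in I_n\bigr\} \cup \diago{} \qquad (U_{0:n}:=0,\ U_{n+1:n}:=1),
\]
and exploit Glivenko--Cantelli ($\max_k|U_{k:n}-k/n|\to 0$ a.s.) to deduce $\dha(n^{-1}I_n,\tilde I_n)\to 0$; it then suffices to show that $(\tilde I_n)_n$ is Hausdorff-Cauchy a.s. The EIP coherence means that $\tilde I_{n+1}$ and $\tilde I_n$ differ only through the collapsing of the removed value $U_{n+1}$, and careful bookkeeping over these local changes yields the Cauchy property. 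Define $I_\infty := \lim_n n^{-1}I_n$, which lies in $\bISi$ by closedness.

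For the representation identity, I would verify for each $1 \leq a \leq b \leq n$ the equivalence
\[
[a,b]\in I_n \iff I_\infty \cap (U_{a-1:n},U_{a:n})\times(U_{b:n},U_{b+1:n}) \neq \emptyset.
\]
The forward direction follows from the construction of $I_\infty$, since $(U_{a-1:n},U_{b:n})$ lies in the interior of the rectangle a.s.\ and is an accumulation point of $\tilde I_m$ for $m\geq n$. The reverse direction is the delicate one and relies on the Section~\ref{sec:CUP} tool: the random rectangles in question intersect the compact set $I_\infty$ either strictly in their interior or not at all, almost surely, ruling out boundary-only contacts. Independence of $I_\infty$ and $U$ then follows from $\cF_\infty$-measurability of $I_\infty$ together with the symmetry of the erasure dynamics under relabeling of the $U$-values, which keeps the $U_i$'s iid uniform conditional on $\cF_\infty$.

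For the topological statements, injectivity of $K\mapsto\law(K)$ is immediate: under $\law(K)$ we have $I_\infty = K$ a.s., which also gives ergodicity. Continuity of $K\mapsto\law(K)$ reduces to the almost-sure continuity of $\phi^{\infty}_n$ in its first argument (again granted by the Section~\ref{sec:CUP} tool), while continuity of the inverse follows since the scaling $n^{-1}I_n$ recovers $K$ in Hausdorff distance. The main obstacle throughout is the boundary behavior of $I_\infty$ against the random grid $(U_{i:n})$: without the Section~\ref{sec:CUP} tool one cannot rule out sample paths where points of $I_\infty$ lie exactly on grid lines, and such pathologies would break both the Hausdorff-Cauchy argument for $(\tilde I_n)$ and the exact sampling identity. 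Once that tool is in place, the rest is a conceptually clean combination of Glivenko--Cantelli, the algorithmic coherence of EIPs, and standard simplex/ergodic-decomposition considerations.
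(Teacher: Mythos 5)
Your overall strategy is sound and closely parallels the paper's: verify $\law(K)\in\ex(\cMI)$ directly, prove a.s.\ convergence of the rescaled chain via a Glivenko--Cantelli type estimate, establish the sampling identity with the help of the Section~\ref{sec:CUP} rectangle lemma, and then read off the topological consequences. Two specific steps, however, contain real gaps.

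First, your Cauchy argument for $(\tilde I_n)_n$ is phrased as a one-step comparison between $\tilde I_n$ and $\tilde I_{n+1}$ followed by ``careful bookkeeping over these local changes.'' A one-step bound on $\dha(\tilde I_n,\tilde I_{n+1})$ is of the order of the largest spacing among $n+1$ iid uniforms, i.e.\ roughly $(\log n)/n$, and this is not summable; telescoping therefore does not yield a Cauchy sequence. What is actually needed, and what the paper supplies through Lemma~\ref{bound} combined with the identity $n^{-1}I_n = n^{-1}\phi^{\infty}_n(m^{-1}I_m, Y^m_n)$ from Lemma~\ref{lemma:technical}, is a bound on $\dha(n^{-1}I_n, m^{-1}I_m)$ that holds \emph{uniformly over all} $m\geq n$ and tends to zero with $n$. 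A uniform-in-$m$ bound of the same flavor does hold for your $\tilde I_n$ (every non-diagonal point of $\tilde I_m$ lies within one $n$-level spacing of a corresponding point of $\tilde I_n$, and conversely), but you must make that direct estimate explicit rather than compare consecutive terms.

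Second, you misplace where the Section~\ref{sec:CUP} tool is needed in the sampling equivalence $[a,b]\in I_n \Longleftrightarrow I_\infty \cap (U_{a-1:n},U_{a:n})\times(U_{b:n},U_{b+1:n}) \neq \emptyset$. Your justification of the forward implication, that ``$(U_{a-1:n},U_{b:n})$ lies in the interior of the rectangle a.s.'', is false: $(U_{a-1:n},U_{b:n})$ is the bottom-left \emph{corner} of that rectangle, hence on its boundary. What the limiting argument actually gives from $[a,b]\in I_n$ (via $I_n = \phi^{\infty}_n(m^{-1}I_m, Y^m_n)$ for all $m\geq n$, then $m\to\infty$) is only that $I_\infty$ meets the \emph{closed} rectangle $[U_{a-1:n},U_{a:n}]\times[U_{b:n},U_{b+1:n}]$; passing from the closed to the open rectangle is exactly where Lemma~\ref{lemma:ractanglecut} must be invoked. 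The reverse implication, where you invoke the tool, does not actually need it: if some point of $I_\infty$ lies strictly inside the open rectangle, any sequence of approximants in $m^{-1}I_m$ converging to it eventually lies strictly inside the corresponding $m$-level open rectangle, so $[a,b]\in I_n$ follows from the finite-level identity alone. This is precisely how the paper's Lemma~\ref{lemma:extensiontothelimit} is organized, and the same distinction matters in the continuity argument (Lemma~\ref{lemma:continuity}), which also uses the rectangle lemma to upgrade intersection of a closed rectangle to intersection of the open one.

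The remaining ingredients in your proposal --- coherence of the sampled process via the two-step consistency of $\phi$, independence of $I_\infty$ and $U$ via independence of $I_n$ and $S_n$ for each $n$, ergodicity via Hewitt--Savage, injectivity of $K\mapsto\law(K)$ via recovery of $K$ from $k^{-1}\phi^{\infty}_k(K,\cdot)$ by Lemma~\ref{bound}, and continuity in $K$ --- all match the paper's proof.
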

	
	\begin{theorem}\label{thm:mainMAP}
		Let $(I,\eta)=(I_n,\eta_n)_{n\in\bN}$ be an erased-interval process and let $(S_n)_{n\in\bN}$ be the permutation process corresponding to $\eta$. Let $(U_i)_{i\in\bN}$ be the $U$-process corresponding to $\eta$ and let $I_{\infty}=\lim\nolimits_{n\rightarrow\infty}n^{-1}I_n$. Define $H_n:=S_n(I_n)$. Then it holds that 
		\begin{equation}\label{eq:defehy}
			H_n=\Big\{\{j\in[n]:x<U_j<y\}:(x,y)\in I_{\infty}\Big\}\cup\{\{j\}:j\in[n]\}\cup\{\emptyset\}~\text{a.s. for every $n$}
		\end{equation}
		and $H=(H_n)_{n\in\bN}$ is an exchangeable interval hypergraph on $\bN$. The map 
		\begin{equation*}
		\cMI\longrightarrow\cME,~\cL((I,\eta))\longmapsto \cL(H)
		\end{equation*}
		is a continuous affine surjection.
	\end{theorem}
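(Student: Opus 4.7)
I would build the proof around the identity (\ref{eq:defehy}), from which parts (b) and the non-surjective halves of (c) then drop out. By Theorem~\ref{thm:mainEIP} we have, almost surely,
\[
I_n = \phi^\infty_n(I_\infty, U_{1:n}, \ldots, U_{n:n}),
\]
with $I_\infty$ independent of $U$. Since $S_n$ is the permutation sorting $U_1, \ldots, U_n$ in increasing order, one has $S_n([a,b]) = \{j \in [n] : U_{a:n} \leq U_j \leq U_{b:n}\}$ for every $1 \leq a \leq b \leq n$. For the $\subseteq$ direction of (\ref{eq:defehy}), pick a non-singleton $[a,b] \in I_n$; by the description (\ref{eq:samplingREP}) of $\phi^\infty_n$ there is a witness $(x,y) \in I_\infty$ in the open rectangle $R^n_{a,b} := (U_{a-1:n}, U_{a:n}) \times (U_{b:n}, U_{b+1:n})$, and the strict inequalities defining the rectangle force $\{j \in [n] : x < U_j < y\} = S_n([a,b])$. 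For $\supseteq$, take $(x,y) \in I_\infty$ producing a set of size $\geq 2$ and choose $a, b \in [n]$ by $U_{a-1:n} \leq x < U_{a:n}$ and $U_{b:n} < y \leq U_{b+1:n}$. Then $\{j : x < U_j < y\} = S_n([a,b])$ and $(x,y) \in \overline{R^n_{a,b}}$. Conditioning on $I_\infty$ and using its independence from $U$, the rectangle-intersection result of Section~\ref{sec:CUP} ensures that almost surely $I_\infty \cap \overline{R^n_{a,b}} \neq \emptyset$ implies $I_\infty \cap R^n_{a,b} \neq \emptyset$, so $[a,b] \in I_n$. Singletons and $\emptyset$ on both sides are trivially matched.

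With (\ref{eq:defehy}) in hand, part (b) is straightforward. $H_n = S_n(I_n)$ is an interval hypergraph since $I_n$ is one and $S_n$ is a permutation; by construction $\emptyset$ and every singleton of $[n]$ lie in $H_n$. The consistency $(H_{n+1})_{|n} = H_n$ follows by intersecting each defining set on the right-hand side of (\ref{eq:defehy}) for $H_{n+1}$ with $[n]$: for $(x,y) \in I_\infty$ one has $\{j \in [n+1] : x<U_j<y\} \cap [n] = \{j \in [n] : x<U_j<y\}$, and the singleton $\{n+1\}$ restricts to $\emptyset$. For exchangeability, given $\pi \in \bS_n$, iid-ness of $(U_i)$ together with the independence of $I_\infty$ and $U$ yields $(I_\infty, U_{\pi^{-1}(1)}, \ldots, U_{\pi^{-1}(n)}) \overset{\cD}{=} (I_\infty, U_1, \ldots, U_n)$; applying (\ref{eq:defehy}) to the former side produces exactly $\pi(H_n)$, so $\pi(H_n) \overset{\cD}{=} H_n$.

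For (c), affineness is automatic because $H$ is a deterministic measurable functional of $(I, \eta)$, so pushforwards of mixtures are mixtures of pushforwards. Continuity holds because on each finite horizon $H_n$ is a locally constant function of the discrete data $(I_n, \eta_1, \ldots, \eta_{n-1})$ on the product path space, which makes the induced map on laws continuous in the weak topologies on $\cMI$ and $\cME$.

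Surjectivity is the main obstacle. My plan is to reduce to the ergodic layer: by Theorem~\ref{thm:mainEIP}, $\cMI \cong \cM_1(\bISi)$, so every EIP law decomposes as $\int \law(K)\,d\mu(K)$, and thus $\Phi$ maps $\cMI$ onto $\{\int Q_K\,d\mu(K) : \mu \in \cM_1(\bISi)\}$, where $Q_K := \Phi(\law(K))$. Combining this with the ergodic decomposition of $\cME$, it suffices to produce, for each ergodic $P \in \cME$, some $K \in \bISi$ with $Q_K = P$. The strategy for this is to take an EIH $H$ with law $P$, enlarge the probability space by an independent eraser process $\eta$ (with associated $U$- and permutation-processes), and for each $n$ measurably select a permutation $\sigma_n$ from the nonempty finite set $\{\sigma \in \bS_n : \sigma^{-1}(H_n) \in \bIS(n)\}$ in a manner coherent with the erasure maps $\phi^{n+1}_n$, so that $(I_n, \eta'_n) := (\sigma_n^{-1}(H_n), \sigma_{n+1}^{-1}(n+1))$ forms an EIP. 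The hardest part of the whole argument will be arranging these selections coherently so that $\eta'$ inherits the required uniformity on $[n+1]$ and independence from the future $\sigma$-field; exchangeability of $H$ and ergodicity are what should enable this. Once the EIP is constructed, Theorem~\ref{thm:mainEIP} gives $I_\infty = K$ almost surely for some $K \in \bISi$, and parts (a)-(b) applied to this EIP yield $\Phi(\law(K)) = \cL(H) = P$, closing the loop.
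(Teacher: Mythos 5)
Your derivation of (\ref{eq:defehy}), and the consequences you draw from it (consistency of the $H_n$, exchangeability, affineness, and continuity of the induced map on laws), are correct. The use of the rectangle-intersection result in the ``$\supseteq$'' step is the right ingredient and is implicitly what makes (\ref{eq:defehy}) an almost-sure identity rather than a pointwise one. The paper obtains exchangeability more economically by noting $\pi(H_n)=(\pi\circ S_n)(I_n)$ with $\pi\circ S_n$ again uniform and independent of $I_n$, and derives consistency from Lemma~\ref{lemma:algorithmic} rather than from (\ref{eq:defehy}), but your route works just as well.

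The gap is in surjectivity, and you flag it yourself as ``the hardest part.'' Constructing an honest EIP from an ergodic exchangeable $H$ by choosing permutations $\sigma_n$ coherently so that the innovations $\eta'_n=\sigma_{n+1}^{-1}(n+1)$ come out uniform on $[n+1]$ and independent of the future is precisely the existence question that needs proving, and neither exchangeability of $H$ nor ergodicity hands you such a selection; already the cardinality of the candidate set $\{\sigma\in\bS_n:\sigma^{-1}(H_n)\in\bIS(n)\}$ depends on $H_n$, which threatens the uniformity of $\eta'_n$ before coherence across $n$ is even addressed. The paper sidesteps the coherence problem entirely by arguing at the level of laws: fix once and for all a deterministic section $\hH\mapsto(\hI_{\hH},\pi_{\hH})$ with $\hH=\pi_{\hH}(\hI_{\hH})$; take a permutation process $S$ independent of $H$; for each $n$ form the finite-horizon process $I^n_k:=\phi^n_k(\hI_{H_n},\vj^{S_n}_k)$ for $k\le n$ (with an arbitrary tail for $k>n$), whose law lives in the compact metrizable space $\cM_1(\prod_k\bIS(k)\times[k+1])$; pass to a convergent subsequence of these laws; and check that the limit $L$ is a Markov law with the co-transitions $\theta$ of (\ref{eq:theta}), hence $L\in\cMI$, and that Lemma~\ref{lemma:algorithmic} gives $S_k(I^n_k)=(S_n(\hI_{H_n}))_{|k}\overset{\cD}{=}H_k$ for $k\le n$, so $L$ maps to $\cL(H)$. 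This soft compactness argument on laws never needs to realize a single coherent path of $\sigma_n$'s, and it works directly for an arbitrary exchangeable $H$, so your reduction to ergodic $P$ is also unnecessary.
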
  
	
	We will shortly state some corollaries that follow easily from the previous two theorems: Let $K_1$ and $K_2$ be convex sets with extreme points $\text{ex}(K_1),\text{ex}(K_2)$ and let $f:K_1\rightarrow K_2$ be an affine surjective map. Then it holds that $f^{-1}(\text{ex}(K_2))\subseteq \text{ex}(K_1)$. This can be applied to $K_1=\cMI, K_2=\cME$ and $f$ as in Theorem \ref{thm:mainMAP}. One easily sees that the map $f$ in this situation maps extreme points to extreme points: every exchangeable interval hypergraph $H$ that is constructed as in (\ref{eq:defehy}) with some \emph{deterministic} $K=I_{\infty}\in\bISi$ is ergodic, due to the Hewitt-Savage zero-one law. Hence $f(\text{ex}(K_1))=\ex(K_2)$. One can summarize these considerations to the following
	
	\begin{corollary}
		Let $K\in\bISi$ and $U=(U_i)_{i\in\bN}$ be an $U$-process. Then the process 
		\begin{equation}\label{eq:defehyi}
			\Bigg(\Bigg\{\Big\{j\in[n]:x<U_j<y\Big\}:(x,y)\in K\Bigg\}\cup\Big\{\{j\}:j\in[n]\Big\}\cup\{\emptyset\}\Bigg)_{n\in\bN}
		\end{equation}
		is an ergodic exchangeable interval hypergraph on $\bN$ and the law of \emph{every} ergodic exchangeable interval hypergraph on $\bN$ can be expressed in this form. Denote the law of (\ref{eq:defehyi}) by $\law^{\text{ih}}(K)$. The map $\bISi\rightarrow\ex(\cME),K\mapsto \law^{\text{ih}}(K)$ is surjective and continuous.
	\end{corollary}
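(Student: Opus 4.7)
The plan is to identify $\law^{\text{ih}}(K)$ with $f(\law(K))$, where $f: \cMI \to \cME$ is the affine continuous surjection provided by Theorem~\ref{thm:mainMAP}, and then harvest the three conclusions from the properties of $\law$ (Theorem~\ref{thm:mainEIP}), of $f$, and from the Hewitt--Savage $0$--$1$ law.

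To make the identification, I would pick an EIP $(I,\eta)$ with $\cL((I,\eta))=\law(K)$. By the almost-sure representation in Theorem~\ref{thm:mainEIP}, $n^{-1}I_n \to K$ almost surely, so the random variable $I_\infty$ appearing in Theorem~\ref{thm:mainMAP} equals the \emph{deterministic} set $K$. Plugging $I_\infty=K$ into~(\ref{eq:defehy}) produces the process displayed in~(\ref{eq:defehyi}), so its law is $f(\law(K)) \in \cME$. This shows at once that the process~(\ref{eq:defehyi}) is an EIH and that $\law^{\text{ih}}(K) = f(\law(K))$.

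For ergodicity, I would invoke the Hewitt--Savage $0$--$1$ law on the iid sequence $U=(U_i)_{i\in\bN}$. Since $K$ is deterministic, the hierarchy~(\ref{eq:defehyi}) is entirely a measurable function of $U$, and one observes from~(\ref{eq:defehyi}) that permuting the coordinates of $U$ by a finite permutation $\pi\in \bS_\infty$ is intertwined with the natural $\bS_\infty$-action on $\bIH(\bN)$: namely, applying $\pi$ to the values $U_j$ has exactly the effect of applying $\pi$ to the labels inside each edge $\{j:x<U_j<y\}$. Hence every $\bS_\infty$-invariant event of the hierarchy pulls back to an exchangeable event of $U$, which has probability $0$ or $1$. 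This gives $\law^{\text{ih}}(K)\in\ex(\cME)$.

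Continuity of $K\mapsto \law^{\text{ih}}(K)$ is then the composition of the continuous maps $K\mapsto \law(K)$ (part of the homeomorphism statement in Theorem~\ref{thm:mainEIP}) and $f$. Surjectivity onto $\ex(\cME)$ follows from a standard Choquet-theoretic fact: for a continuous affine surjection $f$ between compact convex sets and any $y\in\ex(\cME)$, the fibre $f^{-1}(y)$ is a non-empty compact convex face of $\cMI$, and by Krein--Milman its extreme points are extreme in $\cMI$, so $\ex(\cME)\subseteq f(\ex(\cMI))$. Combined with the ergodicity result of the previous paragraph (which gives $f(\ex(\cMI))\subseteq \ex(\cME)$) and the surjection $K\mapsto \law(K)$ onto $\ex(\cMI)$ from Theorem~\ref{thm:mainEIP}, this yields $\{\law^{\text{ih}}(K):K\in\bISi\}=\ex(\cME)$. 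I do not foresee any serious obstacle; the only delicate point is the equivariance check underlying the Hewitt--Savage application, which amounts to reading off the label action from~(\ref{eq:defehyi}).
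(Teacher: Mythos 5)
Your proposal is correct and follows essentially the same route as the paper: identify $\law^{\text{ih}}(K)$ with $f(\law(K))$ via Theorem~\ref{thm:mainMAP} and the almost sure representation $I_\infty=K$ from Theorem~\ref{thm:mainEIP}, use Hewitt--Savage for $f(\ex(\cMI))\subseteq\ex(\cME)$, and a Choquet-theoretic argument for the reverse inclusion. One small remark: your surjectivity step via the fact that the fibre $f^{-1}(y)$ over an extreme $y$ is a non-empty compact convex face, hence by Krein--Milman contains an extreme point of $\cMI$, is actually the correct formulation; the paper's phrasing ``$f^{-1}(\text{ex}(K_2))\subseteq\text{ex}(K_1)$'' is literally false for general affine surjections (consider a coordinate projection of a square onto a segment), so your version repairs a minor slip in the paper's sketch while arriving at the same conclusion $f(\ex(\cMI))=\ex(\cME)$.
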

	
	The next corollary is about the structure of the spaces $\cMI$ and $\cME$ as simplices:
	
	\begin{corollary}
		The simplex $\cMI$ is a Bauer simplex affinely homeomorphic to the simplex of all Borel probability measures on $\bISi$ equipped with the topology of weak convergence. The simplex $\cME$ is also a Bauer simplex: its extreme points are a continuous image of the compact space $\bISi$. 
	\end{corollary}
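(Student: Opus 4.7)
The plan is to assemble the structural results already established. My first step is to observe that by Theorem~\ref{thm:mainEIP} the map $\Phi:\bISi\to\ex(\cMI)$, $K\mapsto\law(K)$, is a homeomorphism; since $\bISi$ is compact, this makes $\ex(\cMI)$ a compact and hence closed subset of $\cMI$. Recalling that a metrizable Choquet simplex whose extreme boundary is closed is by definition a Bauer simplex, this already shows $\cMI$ to be a Bauer simplex. The very same reasoning will dispose of $\cME$: the preceding corollary provides a continuous surjection $\bISi\to\ex(\cME)$, $K\mapsto\law^{\text{ih}}(K)$, so compactness of $\bISi$ together with the Hausdorff property of $\cME$ forces $\ex(\cME)$ to be compact, hence closed in $\cME$, making $\cME$ a Bauer simplex too.

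It remains to exhibit the affine homeomorphism $\cM_1(\bISi)\cong\cMI$. Here I would invoke the classical fact (see e.g.\ \cite{phelps}) that for any Bauer simplex $\cM$ the barycenter map $\beta_{\cM}:\cM_1(\ex(\cM))\to\cM$, $\mu\mapsto\int y\, d\mu(y)$, is an affine homeomorphism when $\cM_1(\ex(\cM))$ carries the topology of weak convergence: the ergodic decomposition formula~(\ref{eq:decomposition}) (transposed from $\cME$ to $\cMI$ as noted in the introduction) already furnishes $\beta_{\cMI}$ as a continuous affine bijection, and continuity of its inverse follows automatically from compactness of $\cM_1(\ex(\cMI))$ together with the Hausdorff property of $\cMI$. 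The homeomorphism $\Phi$ then lifts via pushforward to an affine homeomorphism $\Phi_\ast:\cM_1(\bISi)\to\cM_1(\ex(\cMI))$ — continuity in the weak topology is immediate from continuity of $\Phi$ — and the composition $\beta_{\cMI}\circ\Phi_\ast$ delivers the required affine homeomorphism $\cM_1(\bISi)\to\cMI$.

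There is no genuinely hard step in this plan: the real content of the corollary has already been distilled into Theorem~\ref{thm:mainEIP} and the preceding corollary, and the passage from those to Bauer-simplex language is routine once one recalls the classical topological characterization of Bauer simplices by closedness of the extreme boundary. If any step might resist, it is checking that $\beta_{\cM}$ is a topological homeomorphism (rather than merely a continuous bijection), but for Bauer simplices this is a standard Choquet-theoretic fact and not new mathematics of this paper.
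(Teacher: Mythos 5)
Your proposal is correct and follows essentially the same route the paper relies on: the paper states this corollary without a separate proof, precisely because the argument is the one you give — closedness of $\ex(\cMI)$ follows from the homeomorphism $\bISi\to\ex(\cMI)$ of Theorem~\ref{thm:mainEIP} and compactness of $\bISi$, the ergodic-decomposition/barycenter map is the affine continuous bijection already recorded in the introduction (hence a homeomorphism by compactness), and the $\cME$ half comes from the continuous surjection $\bISi\to\ex(\cME)$ of the preceding corollary.
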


	\subsection{Poly-adic filtrations} Now we shortly explain how one can easily deduce a statement concerning certain poly-adic backward filtrations generated by erased-interval processes and explain why all singletons $\{j\}$ are assumed to be part of any interval hypergraph. For an introduction to poly-adic filtrations and further references we refer the reader to \cite{Leuridan}. One should emphasize that the properties concerning (backwards) filtrations we are going to state are properties concerning \emph{filtered probability spaces}, so they are, in general, not stable under a change of measure. Given a probability space $(\Omega,\cA,\bP)$ and sub-$\sigma$-fields $\cB,\cC\subseteq\cA$, we say that $\cB\subseteq \cC$ holds almost surely iff for every $B\in\cB$ there is a $C\in\cC$ such that $\bP(B\Delta C)=0$. Consequently, $\cB=\cC$ almost surely iff both $\cB\subseteq\cC$ and $\cC\subseteq\cB$ hold almost surely.  
	
	Consider the backward filtration $\cF$ generated by an erased-interval process $(I,\eta)$, so $\cF=(\cF_n)_{n\in\bN}$ with 
	$$\cF_n=\sigma(I_m,\eta_m:m\geq n).$$
	Since $I_{n-1}=\phi^n_{n-1}(I_n,\eta_{n-1})$ holds almost surely for every $n\geq 2$, one has that 
	$$\cF_{n-1}=\cF_n\vee\sigma(\eta_{n-1})~~\text{almost surely for every}~n\geq 2.$$
	By definition, $\eta_{n-1}$ is independent of $\cF_n$ and uniformly distributed on the finite set $\{1,\dots,n\}$. The process $\eta$ is called a process of \emph{local innovations} for $\cF$ and the backward filtration $\cF$ is an example of a  \emph{poly-adic} (backward) filtration. Inductively applying the above almost sure equality of $\sigma$-fields yields 
	$$\cF_k=\sigma(\eta_k,\eta_{k+1},\dots,\eta_{n-1})\vee\cF_{n}~~\text{almost surely for every}~1\leq k<n.$$
	Now $\sigma(\eta_k,\eta_{k+1},\dots,\eta_{n-1})\vee\cF_{n}=\sigma(\eta_m:m\geq k)\vee\cF_n$ for all $1\leq k<n$ holds by definition of $\cF_n$. Via this one obtains
	$$\cF_k=\bigcap\limits_{n>k}[\sigma(\eta_m:m\geq k)\vee \cF_n]~~\text{almost surely for every $k\in\bN$.}$$
	Since $\sigma(\eta_m:m\geq k)$ does not depend on $n$, one may wonder if one can \emph{interchange the order of taking the supremum and taking the intersection} on the right hand side in the last equation. This is \emph{not always} allowed: In \cite{weizsacker1983exchanging} one can find a treatment of such questions in a very general setting. However, Theorem \ref{thm:mainEIP} shows that this interchange \emph{is allowed} in our concrete situation: For this one only needs to observe that 
	$$\sigma(\eta_k,\eta_{k+1},\eta_{k+2},\dots)=\sigma(U_{1:k},\dots,U_{k:k},U_{k+1},U_{k+2},\dots)~~\text{almost surely for every}~k\geq 1,$$
	where $U$ is the $U$-process corresponding to $\eta$. Of course, $I_{\infty}$ is $\cF_{\infty}$-measurable. The representation $I_k=\phi^{\infty}_k(I_{\infty},U_{1:k},\dots,U_{k:k})$ almost surely thus yields that 
	$$\sigma(I_k)\subseteq \sigma(U_{1:k},\dots,U_{k:k})\vee\sigma(I_{\infty})\subseteq \sigma(\eta_k,\eta_{k+1},\dots)\vee\cF_{\infty}~\text{almost surely for every}~k\geq 1.$$
	Hence one obtains  
	$$\cF_k=\sigma(\eta_m:m\geq k)\vee\bigcap\limits_{n>k}\cF_n~\text{almost surely for every}~k\geq 1.$$
	In particular, if $\cF_{\infty}$ is Kolmogorovian, that is if $\cF_{\infty}$ is a.s. trivial, then $\cF$ is almost surely generated by $\eta$ and so it is of product type. This yields
	
	\begin{corollary}
		Let $(I,\eta)=(I_n,\eta_n)_{n\in\bN}$ be an erased-interval process and let $\cF$ be the backwards filtration generated by $(I,\eta)$. Then $\cF$ is of product type iff it is Kolmogorovian and in particular, $\eta$ generates $\cF$ almost surely.  
	\end{corollary}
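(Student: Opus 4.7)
The plan is to leverage the derivation already carried out in the paragraph preceding the statement, namely the almost sure identity
\[
\cF_k = \sigma(\eta_m : m \geq k) \vee \cF_\infty \quad\text{almost surely for every } k \geq 1,
\]
which itself is a consequence of Theorem \ref{thm:mainEIP}: the representation $I_k = \phi^{\infty}_k(I_\infty, U_{1:k}, \dots, U_{k:k})$ expresses $\sigma(I_k)$ as a subfield of $\sigma(U_{1:k}, \dots, U_{k:k}) \vee \sigma(I_\infty)$, which in turn sits inside $\sigma(\eta_m : m \geq k) \vee \cF_\infty$; combined with the trivial inclusion the other way, this yields the displayed equality.

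The forward direction (product type implies Kolmogorovian) is the standard fact about backward filtrations: if $\cF$ is of product type, then it is almost surely generated by a sequence of independent random variables, and Kolmogorov's $0$--$1$ law applied to that sequence gives that the intersection $\cF_\infty$ is almost surely trivial.

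For the converse, assume $\cF_\infty$ is almost surely trivial. Substituting this into the displayed equality gives
\[
\cF_k = \sigma(\eta_m : m \geq k) \quad\text{almost surely for every } k \geq 1,
\]
so $\eta$ generates $\cF$ almost surely. Since the $\eta_n$ are independent with $\eta_n$ uniform on $[n+1]$, the local innovations needed for product type are precisely the $\eta_n$ themselves, hence $\cF$ is of product type.

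The only real work is encapsulated in Theorem \ref{thm:mainEIP}; without it one could not interchange the supremum and intersection and thereby isolate the contribution of $\cF_\infty$ from that of $\eta$. Once that theorem is available, the corollary is a direct reading-off, and the single minor subtlety is to note that $\sigma(U_{1:k}, \dots, U_{k:k}) \vee \sigma(U_{k+1}, U_{k+2}, \dots) = \sigma(\eta_m : m \geq k)$ almost surely, which is part of the $U \leftrightarrow \eta$ correspondence spelled out in the introduction.
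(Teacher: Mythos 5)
Your proposal is correct and follows essentially the same route as the paper: the key interchange of supremum and intersection, justified via the almost-sure representation $I_k = \phi^{\infty}_k(I_\infty, U_{1:k}, \dots, U_{k:k})$ from Theorem~\ref{thm:mainEIP} and the identity $\sigma(\eta_m : m \geq k) = \sigma(U_{1:k},\dots,U_{k:k},U_{k+1},\dots)$ a.s., gives $\cF_k = \sigma(\eta_m : m\geq k)\vee\cF_\infty$, from which the corollary is read off exactly as you describe. The only cosmetic difference is that you also spell out the forward direction (product type $\Rightarrow$ Kolmogorovian via Kolmogorov's $0$--$1$ law), which the paper leaves implicit as standard.
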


	In Section \ref{sec:connections} we explain in what sense every infinite labeled R\'emy bridge can be seen as an erased-interval process. The above statement concerning the filtrations was already formulated in \cite[Lemma 5.3.]{egw2}, but the proof they give contains errors (see the Annex in \cite{Leuridan}). However, our result shows that the lemma formulated in \cite{egw2} is correct.

	In \cite{gerstenberg} and \cite{laurent2016filtrations} different erased-type processes and their backward filtrations have been analyzed: \emph{(general) erased-word processes}. A general erased-word process over a finite alphabet $\Sigma$ is a stochastic process $(W_n,\eta_n)_{n\in\bN}$ that is almost like an erased-interval process, but with the following differences: $W_n=(W_{n,1},\dots,W_{n,n})$ is a random word of length $n$ over the alphabet $\Sigma$ and $W_{n-1}$ is obtained by erasing the $\eta_{n-1}$-th letter from $W_n$. In \cite{gerstenberg} it was shown that
	
	\begin{theorem*}
		The backward filtration generated by a general erased-word process over some finite alphabet $\Sigma$ is of product-type iff it is Kolmogorovian, \emph{but it is in this case not always generated by $\eta$}. 
	\end{theorem*}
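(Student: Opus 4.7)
The plan is to adapt the scaling-limit and sampling techniques of Theorem \ref{thm:mainEIP} to the word setting, carefully pinpointing where it diverges from the interval case. The structural difference is that the interval reduction $\phi^{n+1}_n(\hI,k)$ is deterministic, so $\eta_n$ alone captures every bit of information lost in passing from $I_{n+1}$ to $I_n$. For words, removing the $\eta_n$-th letter from $W_{n+1}$ also destroys the letter $\xi_n := W_{n+1,\eta_n} \in \Sigma$, so $\eta_n$ is no longer a complete process of local innovations; the right innovations are the pairs $(\eta_n,\xi_n)$.

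For the forward direction of the biconditional, product-type implies Kolmogorovian by the standard Kolmogorov $0$--$1$ law applied to the generating independent innovations. For the reverse direction, I would introduce a scaling $n^{-1}W_n$ living in a compact metric space of closed subsets of $[0,1] \times \Sigma$ (built so that $\Sigma$-labels are retained), prove almost-sure Hausdorff convergence $n^{-1}W_n \to W_\infty$ via a Glivenko--Cantelli-type argument in the spirit of Section \ref{sec:proof}, and establish a sampling representation that recovers $W_n$ from $W_\infty$ and the order statistics of the $U$-process corresponding to $\eta$. The analogue of the $\sigma$-field identity derived in the EIP argument should then read
\[
\cF_k = \sigma(\eta_m, \xi_m : m \geq k) \vee \cF_\infty \quad \text{almost surely}.
\]
Assuming $\cF_\infty$ is trivial, this exhibits $\cF$ as generated by the independent innovations $(\eta_n,\xi_n)_{n\in\bN}$, hence of product-type.

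For the second assertion---that $\eta$ alone is insufficient---the natural counterexample is the iid-letter case: let the letters be iid uniform on $\Sigma$ with $\eta$ independent of them. The tail is trivial (e.g.\ via Hewitt--Savage applied to the underlying iid letter sequence), yet $\sigma(\eta)$ is independent of the letters, so $\sigma(\eta) \subsetneq \cF$. The filtration is of product-type under the richer innovations $(\eta_n,\xi_n)$, but not under $\eta$ alone, in contrast to the erased-interval situation where the extra $\Sigma$-bookkeeping is absent.

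The main obstacle is setting up the scaling-limit space and the sampling map so that letter labels are tracked precisely enough to yield the splitting $\cF_k = \sigma(\eta_m,\xi_m : m \geq k) \vee \cF_\infty$. Unlike the interval case, where the geometric closure in $\recto{}$ is canonical, the correct compact target for words requires choices---how to handle ties, how to encode singleton letter labels---that must cooperate both with the almost-sure Hausdorff convergence and with continuity of the sampling map. Once those choices are made, the remaining steps parallel the EIP proof structurally, but the algebra of letter-bookkeeping throughout the almost-sure representation is where the genuine work lies.
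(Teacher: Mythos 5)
The theorem you are addressing is cited in the paper from \cite{gerstenberg} (with the iid-letter case due to \cite{laurent2016filtrations}); the present paper does not reprove it, and in fact warns in the sentence following the statement that ``an almost sure functional representation in the spirit of Theorem \ref{thm:mainEIP} would not have been possible and our method of proof would have failed.'' Your proposal is precisely that method, and the warning is substantive. The central error is the claim that $(\eta_n,\xi_n)_{n\in\bN}$ generates the filtration. Take your own counterexample (iid uniform letters) and build the chain forward: draw $W_1$ uniform on $\Sigma$, and at step $n$ draw $(\eta_n,\xi_n)$ uniform on $[n+1]\times\Sigma$, independent of the past, and insert $\xi_n$ into $W_n$ at slot $\eta_n$. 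This realizes a general erased-word process, and by construction $W_n$ is independent of the entire $\sigma$-field $\sigma(\eta_m,\xi_m:m\geq n)$: the letters of $W_n$ are exactly the ones never deleted, while $(\eta_m,\xi_m)_{m\geq n}$ records only the positions and values of the letters deleted at some time $\geq n$. Since $W_n$ is non-degenerate and $\cF_\infty$ is trivial in this example, the identity $\cF_k=\sigma(\eta_m,\xi_m:m\geq k)\vee\cF_\infty$ you want to establish fails strictly, and the sampling map you propose to build would have to output the letters of $W_k$ from data independent of them.

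Two further points. First, $\xi_n=W_{n+1,\eta_n}$ is already $\cF_{n+1}\vee\sigma(\eta_n)$-measurable, so appending it to $\eta_n$ adds nothing to the one-step innovation, and the pair $(\eta_n,\xi_n)$ is \emph{not} independent of $\cF_{n+1}$ (conditionally on $\eta_n=k$, $\xi_n$ equals the $\cF_{n+1}$-measurable variable $W_{n+1,k}$). The canonical local innovation for this poly-adic filtration is $\eta_n$, exactly as in the interval case; the theorem's nontrivial content is that the filtration is nevertheless of product type whenever it is Kolmogorovian, which is Laurent's result in the iid case and its extension in \cite{gerstenberg}, and this requires constructing a genuinely different generating independent sequence, not a naive enlargement of $\eta$. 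Second, your counterexample for the final clause (``not always generated by $\eta$'') is the correct one and works exactly as you say; only the claim that $(\eta_n,\xi_n)$ suffices instead is wrong.
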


	If one had defined interval hypergraphs such that singleton sets may or may not be part of the edge sets, then some erased-interval process $(I_n,\eta_n)_{n\in\bN}$ would have included non-trivial erased-word processes over the alphabet $\Sigma=\{0,1\}$: $W_{n,i}=1:\Leftrightarrow \{i\}\in I_n.$ Not only would the description of the ergodic laws have become a more challenging task, but an almost sure functional representation in the spirit of Theorem \ref{thm:mainEIP} would not have been possible and our method of proof would have failed. Some applications of Laurent's results concerning erased-word processes can also be found in \cite{Leuridan}.

	\section{proofs}\label{sec:proof}
	
	Now we will prove our main theorems, most of the effort lies in the proof of Theorem~\ref{thm:mainEIP}. We will first gather some lemmas and finally put them together. We need to introduce some notation, which are the finite analogues of the ones introduced in Notation \ref{notation:continous}:
	\begin{notation}\label{notation:discrete}
		Let $n\in\bN$ and $1\leq k\leq n$.
		\begin{enumerate}
			\item[-] $[k:n]$ is the set of all vectors $\vec{j}=(j_1,\dots,j_k)\in[n]^k$ that are strictly increasing, so $1\leq j_1<j_2<\dots<j_k\leq n$.
			\item[-] For $\vec{j}=(j_1,\dots,j_k)\in[k:n]$ we define $j_{0}:=-n$ and $j_{k+1}:=2n$ (to avoid unpleasant case studies in some of the following definitions).
			\item[-] Given some permutation $\pi\in\bS_n$ and some $1\leq k\leq n$ we define $\vj^{\pi}_{k}\in[k:n]$ to be the increasing enumeration of the set $\pi^{-1}([k])\subseteq [n]$. So $\vj^{\pi}_{k}$ is the unique vector $\vj^{\pi}_k=(j^{\pi}_{k,1},\dots,j^{\pi}_{k,k})\in[k:n]$ with $\{j^{\pi}_{k,1},\dots,j^{\pi}_{k,k}\}=\pi^{-1}([k])$. In particular, $j^{\pi}_{k,0}=-n$ and $j^{\pi}_{k,k+1}=2n$. 
			\item[-] Given some $\vj=(j_1,\dots,j_k)\in[k:n]$ we define $$n^{-1}\vj:=\left(\frac{2j_1-1}{2n},\dots,\frac{2j_k-1}{2n}\right)\in[0,1]^k_<.$$
		\end{enumerate} 
	\end{notation}
	
	Up to now, the restriction map $\phi^{n+1}_n$ has only been considered for successive numbers $(n,n+1)$. We will now present the multi-step restriction functions $\phi^{n}_k, 1\leq k\leq n$ and show that $\phi^{\infty}_k$ are, in some sense, the limiting analogies for fixed $k$ with $n\rightarrow\infty$. 
	
	Let $\hI_{n}\in\bIS(n)$ and $(i_{k},i_{k+1},\dots,i_{n-1})\in [k+1]\times[k+2]\times\cdots\times[n]$ be some sequence of erasers. Define inductively $\hI_{m}:=\phi^{m+1}_{m}(\hI_{m+1},i_m)$ for $m=n-1,\dots,k$. The resulting $\hI_k\in\bIS(k)$ does not depend on the full information contained in the sequence of erasers $(i_{k},\dots,i_{n-1})$ as one can interchange orders of erasing in certain senses and obtain the same result. The relevant information contained in $(i_{k},\dots,i_{n-1})$ is described by a vector from $[k:n]$: extend the eraser vector to some vector $(\star,\dots,\star,i_{k},\dots,i_{n-1})\in[2]\times\cdots\times[n]$ and use this vector to define a permutation $\pi\in\bS_{n}$ via $\pi:=b_n(\star,\dots,\star,i_{k},\dots,i_{n-1})$ (see (\ref{eq:algperm})). Now $\hI_k$ only depends on $\hI_{n}$ and $\vj^{\pi}_k\in[k:n]$. This is well defined, since $\vj^{\pi}_k$ does not depend on the choices of $\star$ that were used to produce $\pi$. This functional dependences are given by the following definition:
	\begin{definition}\label{def:finitesampling}
		For $n\in\bN$ and $1\leq k\leq n$ let $\vj=(j_1,\dots,j_k)\in[k:n]$. Define $\phi^n_k:\bIS(n)\times[k:n]\longrightarrow \bIS(k)$ via
		$$\phi^{n}_k(\hI,\vj):=\Bigg\{[a,b]:\begin{aligned}
		&1\leq a\leq b\leq k~\text{s.t. exists $[A,B]\in \hI$ with}\\
		&j_{a-1}<A\leq j_a\leq j_b\leq B<j_{b+1}\\
		\end{aligned}\Bigg\}\cup\Big\{\emptyset\Big\}.$$
	\end{definition}
	
	The overloading of symbols when dealing with open intervals $(x,y)$ and points $(x,y)$ in two dimensions can be carried out for finite interval systems $\hI\in\bIS(n)$ as well. Given some non-empty interval $[a,b]\in \hI$ we can map $[a,b]$ to the point $(a,b)\in[n]\times[n]$ and via this we can interpret each $\hI\in\bIS(n)$ as a subset $\hI\subset [n]\times[n]$ (ignoring the empty set $\emptyset\in\hI$). With this in mind, we can give a description of the map $\phi^n_k$ that is in direct analogy with the one given for $\phi^{\infty}_k$ in (\ref{eq:samplingREP}): Let $1\leq k\leq n, \vj=(j_1,\dots,j_k)\in[k:n], \hI\in\bIS(n)$ and some $1\leq a<b\leq k$. Then it holds that 
	\begin{equation}\label{eq:samplingREPfin}
	[a,b]\in\phi^n_k(\hI,\vj)~\Longleftrightarrow~\hI\cap [j_{a-1}+1,j_a]\times[j_{b},j_{b+1}-1]\neq \emptyset.
	\end{equation}
	See Fig. \ref{fig:finitesampling}.
	
	\begin{figure}
		\centering
		\includegraphics[width=0.9\textwidth]{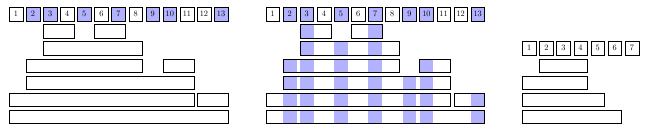}
		\caption{On the left some interval system $\hI\in\bIS(13)$. Highlighted in blue: the vector $\vj=(2,3,5,7,9,10,13)\in[7:13]$. On the right the interval system $\phi^{13}_7(\hI,\vj)$.}
		\label{fig:finitesampling}
	\end{figure}

	We notice that we have defined $\phi^{n+1}_n$ in two ways: at first in Section $1$ as a function $$\bIS(n+1)\times[n+1]\rightarrow\bIS(n)$$ and then in Definition \ref{def:finitesampling} as a function $$\bIS(n+1)\times[n:n+1]\rightarrow\bIS(n).$$ This 'overloading' of the function symbol $\phi^{n+1}_n$ is justified by noticing that $$[n+1]\rightarrow[n:n+1],~~k\mapsto (1,\dots,k-1,k+1,\dots,n+1)$$ is bijective and it holds that 
	\begin{equation}\label{eq:samedef}
		\phi^{n+1}_n(\hI,k)=\phi^{n+1}_n(\hI,(1,\dots,k-1,k+1,\dots,n+1))
	\end{equation}
	for any $n, \hI\in\bIS(n+1)$ and $k\in[n+1]$. Let $\pi\in\bS_n$ and $k\leq n$. 
	If one deletes $k+1,\dots,n$ from the one-line notation of $\pi$, one obtains the one-line notation of some permutation of $[k]$ which we will call $\pi_{|k}\in\bS_k$. This is consistent with building a permutation from a sequence of erasers: Let $(i_1,\dots,i_{n-1})\in[2]\times\cdots \times[n]$ be a sequence of erasers and $\pi=b_n(i_1,\dots,i_{n-1})$, then $\pi_{|k}=b_k(i_1,\dots,i_{k-1})$. The next lemma shows that $\phi^n_k$ really describes the multi-step deletion operations as claimed above and gives some further algorithmic properties. 
	\begin{lemma}\label{lemma:algorithmic}
		Let $n\in\bN$ and $\hI\in\bIS(n)$.
		\begin{enumerate}
			\item[a)] If $1\leq k\leq m\leq n$ and $\vec{j}=(j_1,\dots,j_m)\in[m:n], \vec{h}=(h_1,\dots,h_k)\in[k:m]$ then 
				$$\phi^m_k(\phi^n_m(\hI,\vec{j}),\vec{h})=\phi^n_k(\hI,(j_{h_1},\dots,j_{h_k})).$$
		\end{enumerate}
		Let $\pi\in\bS_n$. For $1\leq k\leq n-1$ let $i_k:=(\pi_{|k+1})^{-1}(k+1)\in[k+1]$. Define $(\hI_n,\hI_{n-1},\dots,\hI_1)$ inductively by $\hI_n:=\hI$ and $\hI_{k-1}:=\phi^{k}_{k-1}(\hI_k,i_{k-1})$ for $2\leq k\leq n$. Then for every $1\leq k\leq n$:
		\begin{enumerate}
			\item[b)] If $\vj^{\pi}_k$ is the enumeration of the set $\pi^{-1}([k])$ it holds that $\hI_k=\phi^n_k(\hI,\vj^{\pi}_k)$. 
			\item[c)] It holds that $\hI_k=\pi_{|k}^{-1}(\pi(\hI)_{|k})$, where $\pi(\hI)$ and $\hI_{|k}$ are the operations introduced for interval hypergraphs in Section~\ref{sec:INTRO}.
		\end{enumerate}
	\end{lemma}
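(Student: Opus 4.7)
My plan is to prove all three parts by directly unwinding the definitions of $\phi^n_k$, using the equivalent characterization in \eqref{eq:samplingREPfin}. The basic tool is that each non-empty $[A,B]\in\hI$ contributes to $\phi^n_k(\hI,\vj)$ at most one interval $[a,b]$, namely the unique one determined by $a=\min\{s\in[k]:j_s\geq A\}$ and $b=\max\{s\in[k]:j_s\leq B\}$ (with the convention that this contributes only if $a\leq b$); this is simply a re-reading of the defining condition $j_{a-1}<A\leq j_a\leq j_b\leq B<j_{b+1}$, once one uses the boundary conventions $j_0=-n$, $j_{k+1}=2n$ that guarantee $a\geq 1$ and $b\leq k$.

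For part (a) my plan is to show that both sides contain exactly the same intervals $[a,b]$. An interval $[a,b]$ lies in the right-hand side $\phi^n_k(\hI,(j_{h_1},\dots,j_{h_k}))$ iff there exists $[A,B]\in\hI$ with $j_{h_{a-1}}<A\leq j_{h_a}\leq j_{h_b}\leq B<j_{h_{b+1}}$ (setting $j_{h_0}:=-n$, $j_{h_{k+1}}:=2n$). For the left-hand side, I would first witness an $[A',B']\in\phi^n_m(\hI,\vj)$ with $h_{a-1}<A'\leq h_a\leq h_b\leq B'<h_{b+1}$, and then unfold $[A',B']$ using the $\vj$-characterization: there exists $[A,B]\in\hI$ with $j_{A'-1}<A\leq j_{A'}\leq j_{B'}\leq B<j_{B'+1}$. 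Substituting the inequalities on $A',B'$ into those on $A,B$ gives precisely the desired chain $j_{h_{a-1}}<A\leq j_{h_a}\leq j_{h_b}\leq B<j_{h_{b+1}}$. The converse direction goes the same way by choosing $A':=\min\{s:j_s\geq A\}$ and $B':=\max\{s:j_s\leq B\}$; these turn out to satisfy both pairs of inequalities. One just has to be careful to check the boundary cases $a=1$ and $b=k$ against the conventions for $j_0$, $j_{k+1}$, $h_0$, $h_{k+1}$.

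For parts (b) and (c) I would induct downward in $k$, starting from the trivial identities $\hI_n=\hI$, $\vj^{\pi}_n=(1,\dots,n)$ (so $\phi^n_n(\hI,\vj^{\pi}_n)=\hI$) and $\pi_{|n}^{-1}(\pi(\hI)_{|n})=\hI$. For (b) the induction step reduces, via \eqref{eq:samedef} and part (a) with $m=k+1$, to the combinatorial claim that $\vj^{\pi}_k$ is obtained from $\vj^{\pi}_{k+1}$ by deleting the $i_k$-th entry. This is a direct consequence of the definitions: $\pi^{-1}([k])=\pi^{-1}([k+1])\setminus\{\pi^{-1}(k+1)\}$, and $\pi^{-1}(k+1)$ is precisely the entry $j^{\pi}_{k+1,s}$ for $s=(\pi_{|k+1})^{-1}(k+1)=i_k$ (since $\pi_{|k+1}(s)=\pi(j^{\pi}_{k+1,s})$). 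For (c) I would trace a non-empty $[A,B]\in\hI$ through the operations: $\pi(e)\cap[k]=\{\pi(j):j\in[A,B],\,\pi(j)\leq k\}=\{\pi_{|k}(s):j^{\pi}_{k,s}\in[A,B]\}$, and applying $\pi_{|k}^{-1}$ yields the set $\{s\in[k]:j^{\pi}_{k,s}\in[A,B]\}$, which is an interval $[a,b]$ (or empty) since $\vj^{\pi}_k$ is strictly increasing. A direct comparison with the characterization from the first paragraph shows this is exactly the interval contributed to $\phi^n_k(\hI,\vj^{\pi}_k)$.

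The main technical obstacle will be the bookkeeping around the boundary conventions for the extended vectors and permutations, and the interplay between the different ways of encoding the same data (single-step erasers $i_k\in[k+1]$ versus index vectors $\vec{h}\in[k:k+1]$, and permutations $\pi$ versus their sequences of restrictions $\pi_{|k}$). Once one fixes a consistent notation, each individual verification is a one-line check, and part (a) really does all the heavy lifting for the other two parts.
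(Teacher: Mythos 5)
Your proposal is correct and, for parts a) and b), takes essentially the same route as the paper: part a) by chaining the rectangle-intersection characterization \eqref{eq:samplingREPfin} applied first to $\vec h$ and then to $\vec j$, and part b) by downward induction using \eqref{eq:samedef} together with part a) once one checks that $\vj^\pi_k$ is $\vj^\pi_{k+1}$ with its $i_k$-th entry deleted (the paper gives exactly this argument).

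For part c) your route is a direct computation while the paper's is a small reduction. The paper first observes that both $f^n_k(\hI,\pi):=\pi_{|k}^{-1}(\pi(\hI)_{|k})$ and $g^n_k(\hI,\pi):=\phi^n_k(\hI,\vj^\pi_k)$ satisfy the same composition rule $f^m_k(f^n_m(\hI,\pi),\pi_{|m})=f^n_k(\hI,\pi)$ (using part b) for $g$ and the definition for $f$), and then verifies the single-step case $k=n-1$ by tracking the deletion of $\pi^{-1}(n)=i_{n-1}$. You instead trace an arbitrary edge $[A,B]\in\hI$ through relabel-then-restrict-then-unlabel and match the result $\{s\in[k]:j^\pi_{k,s}\in[A,B]\}$ against the per-edge contribution to $\phi^n_k(\hI,\vj^\pi_k)$; this is valid and somewhat more explicit, at the cost of more bookkeeping (you need the identity $\pi_{|k}(s)=\pi(j^\pi_{k,s})$, which you correctly use). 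Either way the heavy lifting is in part a) and the boundary conventions, as you note.

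Two small points worth making precise when you write this up: in part a), when passing from the pair $([a',b'],[A,B])$ to the single rectangle $[j_{h_{a-1}}+1,j_{h_a}]\times[j_{h_b},j_{h_{b+1}}-1]$, the key fact is that the rectangles $[j_{a'-1}+1,j_{a'}]$ for $h_{a-1}+1\le a'\le h_a$ tile $[j_{h_{a-1}}+1,j_{h_a}]$ (and similarly for $b'$); and in part c) you should observe that the per-edge image $\{s:j^\pi_{k,s}\in[A,B]\}$ is automatically accounted for when it is empty or a singleton, since $\phi^n_k$ always adjoins $\emptyset$ and all singletons.
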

	\begin{proof}
		a):~Let $1\leq a<b\leq k$. Equation (\ref{eq:samplingREPfin}) yields 
		\begin{align*}
			[a,b]\in \phi^m_k(\phi^n_m(\hI,\vec{j}),\vec{h})&\Longleftrightarrow \phi^n_m(\hI,\vec{j})\cap [h_{a-1}+1,h_a]\times[h_b,h_{b+1}-1]\neq \emptyset.\\
			&\Longleftrightarrow \exists [a',b']\subseteq [m]: \hI\cap [j_{a'-1}+1,j_{a'}]\times[j_{b'},j_{b'+1}-1]\neq\emptyset\\
			&~~~~~~~~~~~~~\text{and}~h_{a-1}+1\leq a'\leq h_a<h_b\leq b'\leq h_{b+1}-1\\
			&\Longleftrightarrow \hI\cap [j_{h_{a-1}}+1,j_{h_a}]\times[j_{h_b},j_{h_{b+1}}-1]\neq \emptyset.\\
			&\Longleftrightarrow [a,b]\in \phi^n_k(\hI,(j_{h_1},\dots,j_{h_k})).
		\end{align*}
		b):~The vector $\vj^{\pi}_k$ is the enumeration of the set $\pi^{-1}([k])$ and $\vj^{\pi}_{k+1}$ is the enumeration of the set $\pi^{-1}([k+1])$. Now $i_k=(\pi_{|k+1})^{-1}(k+1)$ is that position in $\vj^{\pi}_{k+1}$, that stems from the preimage of '$k+1$' under $\pi$. So if one removes the $i_k$-th value in $\vj^{\pi}_{k+1}$, one obtains  precisely $\vj^{\pi}_k$. The first equality $\hI_k=\phi^n_k(\hI, \vj^{\pi}_k)$ now follows from (\ref{eq:samedef}) together with (1) by induction.
		
		c):~For the second equation we first argue that it is enough to prove it for $k=n-1$. Define $f^n_k(\hI,\pi):=\pi_{|k}^{-1}(\pi(\hI)_{|k})$ and $g^n_k(\hI,\pi):=\phi^n_k(\hI,\vj^{\pi}_k)$. For every $1\leq k\leq m\leq n$ one obtains 
		$$f^m_k(f^n_m(\hI,\pi),\pi_{|m})=f^n_k(\hI,\pi)~~\text{and}~~g^m_k(g^n_m(\hI,\pi),\pi_{|m})=g^n_k(\hI,\pi),$$
		where the first equality follows easily from the definition. The second equality follows from b). So it is enough to prove the second equality for the case $k=n-1$: First relabeling $\hI$ with $\pi$ and then restricting $\pi(\hI)$ to the set $[n-1]$ results in the deletion of $\pi^{-1}(n)=i_{n-1}$. Then relabeling $\pi(\hI)_{|n-1}$ with the inverse of $\pi_{|n-1}$ results in reordering the set $\{1,2,\dots,i_{n-1}-1,i_{n-1}+1,\dots,n\}$ in its usual order. So the result is precisely $\phi^n_{n-1}(\hI,i_{n-1})$. 
	\end{proof}
	
	In particular, if $(I,\eta)=(I_n,\eta_n)_{n\in\bN}$ is an erased-interval process and $S=(S_n)_{n\in\bN}$ is the permutation process associated to $\eta$, then 
	\begin{equation}\label{eq:samplingas}
		I_k=\phi^n_k(I_n,\vec{j}^{S_n}_k)~~\text{almost surely for all}~1\leq k\leq n.
	\end{equation}
	In the next lemma we will state and prove some technical features involving the maps $\phi^n_k$ and $\phi^{\infty}_k$. In particular, it shows that one can interpret the maps $\phi^{\infty}_k$ to be the \emph{extensions of $\phi^n_k$ as $n\rightarrow\infty$}. It also explains why we have chosen our particular method of scaling finite interval systems and vectors from $[k:n]$.
	
	\begin{lemma}\label{lemma:technical}
		Let $n\in\bN$ and $1\leq k\leq n$. 
		\begin{enumerate}
			\item[i)] For all $\vec{j}\in[k:n]$ and $\hI\in\bIS(n)$ 
			\begin{equation*}
			\phi^n_k\left(\hI,\vj\right)=\phi^{\infty}_k\left(n^{-1}\hI,n^{-1}\vj\right).
			\end{equation*}
			\item[ii)] For all $K\in\bISi$ and $(u_1,\dots,u_n)\in[0,1]^n_{\neq}$ one has with $\pi:=\pi(u_1,\dots,u_n)$ 
			\begin{equation*}
			\phi^{\infty}_k\left(K,u_{1:k},\dots,u_{k:k}\right)=\phi^n_k\left(\phi^{\infty}_n(K,u_{1:n},\dots,u_{n:n}),\vj^{\pi}_k\right).
			\end{equation*}
			\item[iii)] The map $\phi^{\infty}_k$ is measurable with respect to the Borel $\sigma$-field on $\bISi\times[0,1]^k_<$.
		\end{enumerate}
	\end{lemma}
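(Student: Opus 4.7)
\emph{Proof proposal.} For part (i), I would verify the set equality pointwise using the characterizations (\ref{eq:samplingREPfin}) and (\ref{eq:samplingREP}). Singleton intervals and $\emptyset$ appear in both sides by definition, so only the non-singleton case $1\leq a<b\leq k$ needs to be checked. Setting $u_i=(2j_i-1)/(2n)$, a point $((A-1)/n,B/n)$ coming from $[A,B]\in\hI$ lies in the open rectangle $(u_{a-1},u_a)\times(u_b,u_{b+1})$ iff
\[
u_{a-1}<\tfrac{A-1}{n}<u_a\quad\text{and}\quad u_b<\tfrac{B}{n}<u_{b+1},
\]
which by direct arithmetic unfolds to $A>j_{a-1}+\tfrac12$, $A<j_a+\tfrac12$, $B>j_b-\tfrac12$, $B<j_{b+1}-\tfrac12$, and by integrality to $j_{a-1}+1\leq A\leq j_a$ and $j_b\leq B\leq j_{b+1}-1$; this matches (\ref{eq:samplingREPfin}). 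The boundary indices $a=1,b=k$ agree because the conventions $j_0=-n,j_{k+1}=2n,u_0=-1,u_{k+1}=2$ make the outer inequalities vacuous on both sides. Points of $\diago{}\subseteq n^{-1}\hI$ never satisfy a non-singleton condition (they would force $u_a>u_b$, i.e.\ $a>b$), so they contribute nothing beyond the singletons built into the definition.

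For part (ii), the pivotal identity is $u_{i:k}=u_{j^{\pi}_{k,i}:n}$ for $1\leq i\leq k$, with the natural convention extensions $u_{0:k}=u_{0:n}=-1$ and $u_{(k+1):k}=u_{(n+1):n}=2$. This holds because $\pi^{-1}(\ell)$ is the rank of $u_\ell$ in $(u_1,\dots,u_n)$, so the increasing enumeration $\vj^{\pi}_k$ of $\pi^{-1}([k])$ lists precisely the ranks within the full sample of the subsample $u_1,\dots,u_k$ in increasing order. Via this identity, the ``big'' rectangle $R_{\text{big}}=(u_{(a-1):k},u_{a:k})\times(u_{b:k},u_{(b+1):k})$ equals $(u_{j^{\pi}_{k,a-1}:n},u_{j^{\pi}_{k,a}:n})\times(u_{j^{\pi}_{k,b}:n},u_{j^{\pi}_{k,b+1}:n})$. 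Applying (\ref{eq:samplingREP}) on the left and (\ref{eq:samplingREPfin}) to $\phi^n_k(\phi^{\infty}_n(K,\cdot),\vj^{\pi}_k)$ on the right reduces the claim to the statement: $K$ meets $R_{\text{big}}$ iff $K$ meets one of the ``small'' rectangles $R_{A,B}=(u_{(A-1):n},u_{A:n})\times(u_{B:n},u_{(B+1):n})$ with $A\in\{j^{\pi}_{k,a-1}+1,\dots,j^{\pi}_{k,a}\}$ and $B\in\{j^{\pi}_{k,b},\dots,j^{\pi}_{k,b+1}-1\}$. The backward direction is immediate since each $R_{A,B}\subseteq R_{\text{big}}$; for the forward direction, decompose $R_{\text{big}}$ as the disjoint union of the $R_{A,B}$ together with finitely many intermediate vertical and horizontal gridlines, and argue that any intersection point of $K$ with $R_{\text{big}}$ yields a point in some $R_{A,B}$.

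For part (iii), since $\bIS(k)$ is finite and discrete, measurability reduces to showing that each preimage is Borel, which by (\ref{eq:samplingREP}) reduces to showing each event
\[
E_{a,b}=\{(K,u)\in\bISi\times[0,1]^k_<:K\cap(u_{a-1},u_a)\times(u_b,u_{b+1})\neq\emptyset\}
\]
is Borel. I would use the rational approximation
\[
E_{a,b}=\bigcup_{\substack{p_1<p_2,\,q_1<q_2\\ p_i,q_i\in\mathbb{Q}}}\bigl\{K:K\cap[p_1,p_2]\times[q_1,q_2]\neq\emptyset\bigr\}\times\bigl\{u:u_{a-1}<p_1<p_2<u_a,\,u_b<q_1<q_2<u_{b+1}\bigr\},
\]
where the $K$-factor is closed in the Hausdorff topology (continuity of $K\mapsto d(K,F)$ for closed $F$) and the $u$-factor is open; hence each summand is Borel and so is $E_{a,b}$.

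The main obstacle I anticipate is in part (ii): the index bookkeeping is straightforward, but the forward direction of the rectangle equivalence requires controlling possibly pathological intersections of $K$ with $R_{\text{big}}$ that lie only on intermediate gridlines of the form $\{u_{A:n}\}\times[\cdot]$ or $[\cdot]\times\{u_{B:n}\}$; parts (i) and (iii) are then routine computations.
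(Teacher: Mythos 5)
Parts (i) and (iii) are fine: your verification of (i) is the same arithmetic unfolding the paper performs, and for (iii) your rational-rectangle decomposition of the event is an equivalent alternative to the paper's trick of shrinking the closed rectangle by $1/n$; both produce the event as a countable union of Borel sets. Your bookkeeping identity $u_{i:k}=u_{j^{\pi}_{k,i}:n}$ in (ii) is also correct, and the reduction of (ii) to "$K$ meets $R_{\text{big}}$ iff $K$ meets some $R_{A,B}$" is a genuinely different route from the paper's, which instead invokes Lemma~\ref{lemma:algorithmic} b)--c) together with a claimed identity for the relabeled interval system $\pi(\hI)$.

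The problem is that the obstacle you anticipate in the forward direction of the rectangle equivalence is not a technicality to be ironed out: as a deterministic statement over all $K\in\bISi$ and $(u_1,\dots,u_n)\in[0,1]^n_{\neq}$ it is \emph{false}, and so, strictly speaking, is part (ii). Take $n=5$, $k=3$, $(u_1,\dots,u_5)=(0.1,0.5,0.9,0.3,0.7)$ and $K=\diago{}\cup\{(0.05,0.7)\}$. The point $(0.05,0.7)$ has $y$-coordinate equal to $u_5=u_{4:5}$, a gridline of the $n$-mesh that is not a gridline of the $k$-mesh; it lies on the common boundary of the $[1,3]$- and $[1,4]$-rectangles, so $\phi^{\infty}_5(K,0.1,0.3,0.5,0.7,0.9)$ contains only singletons and hence the right-hand side of (ii) equals $\{\emptyset,\{1\},\{2\},\{3\}\}$; yet $(0.05,0.7)\in(-1,0.1)\times(0.5,0.9)$, so $[1,2]$ belongs to the left-hand side $\phi^{\infty}_3(K,0.1,0.5,0.9)$. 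It is worth noting that the paper's proof is not immune: its displayed equality $\pi(\hI)=\{\{i\in[n]:x<u_i<y\}:(x,y)\in K\}\cup\{\{j\}:j\in[n]\}\cup\{\emptyset\}$ is in general only the inclusion $\subseteq$, and the reverse inclusion fails in the example above (the set $\{i:0.05<u_i<0.7\}=\{1,2,4\}$ is not in $\pi(\hI)$). What makes the lemma serviceable throughout the paper is that wherever it is applied the $u_i$ are a realization of a $U$-process independent of $K$, so the bad event — a point of $K$ landing on a gridline without some other point of $K$ covering the relevant open sub-rectangle — has probability zero, which is precisely what Lemma~\ref{lemma:ractanglecut} and Proposition~\ref{thm:AlmostSureInterxectionInInterior} are built to control. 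Any purely deterministic proof of (ii), by your decomposition or by the paper's relabeling, has to either add a hypothesis excluding gridline coincidences or be downgraded to an almost-sure statement.
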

	
	\begin{proof}
		i):~For any $1\leq a<b\leq k$ it holds that 
		\begin{align*}
			[a,b]\in \phi^n_k\left(\hI,\vj\right)&\Longleftrightarrow \hI\cap [j_{a-1}+1,j_a]\times[j_{b},j_{b+1}-1]\neq \emptyset\\
												 &\Longleftrightarrow \text{there exists some~}[A,B]\in\hI~\text{such that}\\
												 &~~~~~~~~~~~~~~~~~~~~~~~~~j_{a-1}+1\leq A\leq j_a<j_b\leq B\leq j_{b+1}-1\\
												 &\Longleftrightarrow \text{there exists some~}[A,B]\in\hI~\text{such that}\\
												 &~~~~~~~~~~~~~~~\frac{2j_{a-1}-1}{2n}< \frac{A-1}{n}< \frac{2j_{a}-1}{2n}<\frac{2j_{b}-1}{2n}< \frac{B}{n}< \frac{2j_{b+1}-1}{2n}\\
												 &\Longleftrightarrow [a,b]\in \phi^{\infty}_k(n^{-1}\hI,n^{-1}\vj).
		\end{align*} 
		ii):~Let $\hI:=\phi^{\infty}_n(K,u_{1:n},\dots,u_{n:n})$. One obtains
		\begin{equation}\label{eq:orderedsamplingandrenaming}
			\pi(\hI)=\Big\{\{i\in[n]:x<u_i<y\}:(x,y)\in K\Big\}\cup\{\{j\}:j\in[n]\}\cup\{\emptyset\}.
		\end{equation}
		Thus restricting $\pi(\hI)$ to the set $\{1,\dots,k\}$ yields 
		\begin{equation}\label{eq:orderedsamplingandrenaming}
			\pi(\hI)_{|k}=\Big\{\{i\in[k]:x<u_i<y\}:(x,y)\in K\Big\}\cup\{\{j\}:j\in[k]\}\cup\{\emptyset\}.
		\end{equation}
		Hence the claimed equality follows from Lemma \ref{lemma:algorithmic}, b) and c).
		
		iii):~Fix some $1\leq a<b\leq k$. One needs to show that the set 
		$$A:=\{(K,u_1,\dots,u_k)\in\bISi\times[0,1]^k_<:K\cap (u_{a-1},u_a)\times(u_b,u_{b+1})\neq \emptyset\}$$
		is a Borel subset of $\bISi\times[0,1]^k_<$. For convenience we consider only the case $2\leq a<b\leq k-1$. Define 
		$$B:=\{(K,u_1,u_2,u_3,u_4)\in \bISi\times[0,1]^4_<:K\cap [u_1,u_2]\times[u_3,u_4]\neq \emptyset\}.$$
		This is a closed subset of $\bISi\times[0,1]^4_<$ and hence Borel. Let
		$$C:=\{(K,u_1,u_2,u_3,u_4)\in \bISi\times[0,1]^4_<:K\cap (u_1,u_2)\times(u_3,u_4)\neq \emptyset\}.$$
		One has that 
		$$(K,u_1,\dots,u_k)\in A~\Longleftrightarrow~(K,u_{a-1},u_a,u_{b},u_{b+1})\in C$$
		and so it is enough to argue that $C$ is Borel. For $n\in\bN$ define the set $C_n\subseteq \bISi\times[0,1]^4_<$ by 
		$$(K,u_1,u_2,u_3,u_4)\in C_n:\Leftrightarrow (K,u_1+n^{-1},u_2-n^{-1},u_3+n^{-1},u_4-n^{-1})\in B.$$
		Now every $C_n$ is closed and hence $B=\bigcup_{n\geq 1}C_n$ is Borel. 
	\end{proof}
	
	The next lemma shows that the Hausdorff distance between $K$ and $k^{-1}\phi^{\infty}_k(K,u_1,\dots,u_k)$ can be bounded uniformly in $K$ just depending on $(u_1,\dots,u_k)\in [0,1]^k_<$ in a non-trivial way:
	
	\begin{lemma}\label{bound}
		Let $k\in\bN$ and $u=(u_1,\dots,u_k)\in[0,1]^k_<$. Let $F_u$ be the empirical distribution function associated to $u$, so $F_u(x):=k^{-1}\sum\nolimits_{j=1}^k1(u_j\leq x)$ for $x\in[0,1]$ and let $F^{-}_u$ be the left-continuous version of $F_u$, so $F^{-}_u(x)=k^{-1}\sum\nolimits_{j=1}^k1(u_j<x)$. Let $K\in\bISi$. Then the sampled and normalized interval system $k^{-1}\phi^{\infty}_k(K,u_1,\dots,u_k)$ has the concrete representation
		$$k^{-1}\phi^{\infty}_k(K,u_1,\dots,u_k)=\Big\{\big(F_u(x),F^{-}_u(y)\big):(x,y)\in K, F_u(x)\leq F_u^{-}(y)\Big\}\cup \diago{}$$
		and the Hausdorff distance to $K$ can be bounded just in terms of $u$:
		$$\dha\Big(k^{-1}\phi^{\infty}_k(K,u_1,\dots,u_k),K\Big)\leq \sup\nolimits_{x\in[0,1]}|F_u(x)-x|+\sup\nolimits_{y\in[0,1]}|F^{-}_u(y)-y|.$$
	\end{lemma}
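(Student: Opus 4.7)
The proof reads off both claims directly from the definition of $\phi^{\infty}_k$ and the $k^{-1}$-scaling, while keeping careful track of the boundary behavior at the tie points $u_1,\dots,u_k$.

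First, I would verify the explicit representation. By the characterization (\ref{eq:samplingREP}), a non-singleton interval $[a,b]$ with $1\leq a<b\leq k$ lies in $\phi^{\infty}_k(K,u)$ iff $K\cap(u_{a-1},u_a)\times(u_b,u_{b+1})\neq\emptyset$; for any $(x,y)$ in this open rectangle a direct count gives $F_u(x)=(a-1)/k$ and $F_u^-(y)=b/k$, so after scaling the contributed point $((a-1)/k,b/k)$ equals $(F_u(x),F_u^-(y))$ and satisfies $F_u(x)\leq F_u^-(y)$. Conversely, given $(x,y)\in K$ with $F_u(x)\leq F_u^-(y)$ and $(F_u(x),F_u^-(y))$ not already on the diagonal, set $a:=kF_u(x)+1$ and $b:=kF_u^-(y)$; then $a\leq b$ together with $u_{a-1}\leq x<u_a$ and $u_b<y\leq u_{b+1}$, so $(x,y)$ sits in the closure of the rectangle associated to $[a,b]$, and a small perturbation exploiting $\diago{}\subseteq K$ moves $(x,y)$ into the open rectangle, yielding $[a,b]\in\phi^{\infty}_k(K,u)$ with scaled representative $(F_u(x),F_u^-(y))$.

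Second, I would deduce the Hausdorff bound by checking both directions. Set $\epsilon:=\sup_{x}|F_u(x)-x|+\sup_{y}|F_u^-(y)-y|$. For any $(p,q)\in k^{-1}\phi^{\infty}_k(K,u)$: either $(p,q)\in\diago{}\subseteq K$ (distance zero), or $(p,q)=(F_u(x),F_u^-(y))$ for some $(x,y)\in K$ by the representation, so the $1$-norm distance from $(p,q)$ to $(x,y)\in K$ is $|F_u(x)-x|+|F_u^-(y)-y|\leq\epsilon$. Conversely, for any $(x,y)\in K$: if $F_u(x)\leq F_u^-(y)$, then $(F_u(x),F_u^-(y))\in k^{-1}\phi^{\infty}_k(K,u)$ and the distance is again bounded by $\epsilon$; if instead $F_u(x)>F_u^-(y)$, then since $x\leq y$ the monotonicity of $F_u$ and $F_u^-$ forces $x=y$ with $x\in\{u_1,\dots,u_k\}$, hence $(x,x)\in\diago{}\subseteq k^{-1}\phi^{\infty}_k(K,u)$ and the distance is zero. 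Taking the supremum in each direction yields $\dha(k^{-1}\phi^{\infty}_k(K,u),K)\leq\epsilon$.

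The main technical obstacle is the asymmetric treatment of ties: the rectangles defining $\phi^{\infty}_k$ are open, while $F_u$ is right-continuous and $F_u^-$ is left-continuous, so transferring between the two leaves non-strict inequalities that must be matched with strict ones. The inclusion $\diago{}\subseteq K$ is what makes this work: on the diagonal one has $F_u(x)=F_u^-(x)$ outside the finite set $\{u_1,\dots,u_k\}$, which guarantees that any boundary case either perturbs cleanly into the interior of an appropriate rectangle or collapses to a diagonal point already present in both sides. Once this bookkeeping is in place, the two steps above combine to give the lemma.
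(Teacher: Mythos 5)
Your proof follows the same route as the paper's, and it breaks at exactly the place you flag as the ``main technical obstacle.'' The step ``a small perturbation exploiting $\diago{}\subseteq K$ moves $(x,y)$ into the open rectangle'' is not justified: if an off-diagonal $(x,y)\in K$ has $x=u_j$ for some $j$, then $(x,y)$ sits on the left edge of $(u_{a-1},u_a)\times(u_b,u_{b+1})$, a rightward perturbation need not stay in $K$, and the diagonal of $K$ is far from such an $(x,y)$, so $\diago{}\subseteq K$ cannot repair the argument. This is a genuine gap: once such ties occur, both claims of the lemma can actually fail. Take $k=4$, $u=(1/8,3/8,5/8,7/8)$, so that $\varepsilon:=\sup_x|F_u(x)-x|+\sup_y|F^-_u(y)-y|=1/4$, and $K=\diago{}\cup\{(1/8,1)\}\in\bISi$. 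Since $1/8=u_1$, no open rectangle $(u_{a-1},u_a)\times(u_b,u_{b+1})$ meets $K$, hence $\phi^{\infty}_4(K,u)$ is just the singletons and $4^{-1}\phi^{\infty}_4(K,u)=\diago{}\cup\{(0,\tfrac14),(\tfrac14,\tfrac12),(\tfrac12,\tfrac34),(\tfrac34,1)\}$. The point $(F_u(1/8),F^-_u(1))=(1/4,1)$ satisfies $F_u(1/8)\leq F^-_u(1)$, so it lies in the right-hand side of the claimed representation, but it is not in the left-hand side; and the nearest point of $4^{-1}\phi^{\infty}_4(K,u)$ to $(1/8,1)\in K$ is at $\ell^1$-distance $5/8>\varepsilon$, so the Hausdorff bound fails as well.

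To be fair, the paper's own proof is even more laconic at this spot---it simply asserts that ``the scaling by $k$ yields the stated representation'' without checking the reverse inclusion---and suffers from the same hole. The lemma becomes correct once one adds the hypothesis that no $u_j$ coincides with a coordinate of an off-diagonal point of $K$; in the two places the paper invokes it this can be justified (in Lemma~\ref{lemma:asconvergence} the $u$-entries are odd multiples of $1/(2m)$ while $K=m^{-1}I_m$ has coordinates at integer multiples of $1/m$, so there are no ties; in the injectivity step of Theorem~\ref{thm:mainEIP} one has almost-sure avoidance, using ideas as in Proposition~\ref{thm:AlmostSureInterxectionInInterior}). A separate, smaller inaccuracy you (and the paper) gloss over: even under a no-tie hypothesis the displayed representation is not an equality, because the singleton $[a,a]\in\phi^{\infty}_k(K,u)$ always contributes $((a-1)/k,a/k)$, which for $K=\diago{}$ belongs to neither $\diago{}$ nor $\{(F_u(x),F^-_u(y)):(x,y)\in K,\,F_u(x)\leq F^-_u(y)\}$. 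Those extra points are within $\varepsilon$ of $(u_a,u_a)\in\diago{}\subseteq K$, so they do not threaten the distance estimate, but the equality should not be read literally.
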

	\begin{proof}
		Since $[a,b]\in\phi^{\infty}_k(K,u_1,\dots,u_k)$ iff $K\cap (u_{a-1},u_a)\times(u_b,u_{b+1})\neq \emptyset$, every interval $[a,b]\in \phi^{\infty}_k(K,u_1,\dots,u_k)$ is of the form 
		$$\Big[1+\sum\nolimits_{j=1}^k1(u_j\leq x),\sum\nolimits_{j=1}^k1(u_j<y)\Big]~~\text{for some}~(x,y)\in K.$$
		Hence the scaling by $k$ yields the stated representation of $k^{-1}\phi^{\infty}_k(K,u_1,\dots,u_k)$. The distance bound is an easy consequence: For every $(x',y')\in K$ one has that 
		$$\min\limits_{(x'',y'')\in K}|x'-F_u(x'')|+|y'-F^{-}_u(y'')|\leq \sup\limits_{x\in[0,1]}|F_u(x)-x|+\sup\limits_{y\in[0,1]}|F^{-}_u(y)-y|,$$
		hence 
		$$d((x',y'),k^{-1}\phi^{\infty}_k(K,u_1,\dots,u_k))\leq \sup\limits_{x\in[0,1]}|F_u(x)-x|+\sup\limits_{y\in[0,1]}|F^{-}_u(y)-y|$$
		and thus 
		$$\max\limits_{(x',y')\in K}d((x',y'),k^{-1}\phi^{\infty}_k(K,u_1,\dots,u_k))\leq \sup\limits_{x\in[0,1]}|F_u(x)-x|+\sup\limits_{y\in[0,1]}|F^{-}_u(y)-y|.$$
		In the same way one can argue that 
		$$\max\limits_{(x',y')\in K}d((F_u(x'),F_u^{-}(y')),K)\leq \sup\limits_{x\in[0,1]}|F_u(x)-x|+\sup\limits_{y\in[0,1]}|F^{-}_u(y)-y|.$$
		This yields the distance bound. 
	\end{proof}
	
	We can now easily deduce that $n^{-1}I_n$ converges almost surely for every erased-interval process $(I_n,\eta_n)_{n\in\bN}$. Let $S$ be the permutation process corresponding to an eraser process $\eta$. For $1\leq k\leq n$ define 
	
	\begin{equation}\label{eq:defscaled}
		Y^n_k=(Y^n_{k,1},\dots,Y^n_{k,k}):=n^{-1}\vj^{S_n}_k,
	\end{equation}
	where $\vj^{S_n}_k\in[k:n]$ is the enumeration of the random $k$-set $S_n^{-1}([k])\subseteq[n]$. The scaled vector $Y^n_k$ takes values in $[0,1]^k_<$. Now let $(U_i)_{i\in\bN}$ be the $U$-process corresponding to $\eta$. One easily obtains that for every $k\in\bN$ 
	\begin{equation}\label{eq:almostsureconvergenceofsamplingsequence}
	Y^n_k=(Y^n_{k,1},\dots,Y^n_{k,k})\longrightarrow (U_{1:k},\dots,U_{k:k})~~\text{almost surely as $n\rightarrow\infty$}.
	\end{equation}
	
	Now we can prove the \emph{strong law of large numbers} for erased-interval processes:
	
	\begin{lemma}\label{lemma:asconvergence}
		Let $(I_n,\eta_n)_{n\in\bN}$ be an erased-interval process. Then $n^{-1}I_n$ converges almost surely in the space $(\bISi,\dha)$ towards some random variable $I_{\infty}$ as $n\rightarrow\infty$. 
	\end{lemma}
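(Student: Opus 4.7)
The plan is to show that $(n^{-1}I_n)_{n\in\bN}$ is almost surely a Cauchy sequence in the complete metric space $(\bISi,\dha)$, from which the claim follows by completeness. The central input is the bound in Lemma~\ref{bound}, fed by the almost sure identity $I_k = \phi^n_k(I_n,\vj^{S_n}_k)$ from (\ref{eq:samplingas}). Using Lemma~\ref{lemma:technical}~i) I would rewrite this as $I_k = \phi^{\infty}_k(n^{-1}I_n, Y^n_k)$ almost surely for $1\leq k\leq n$, where $Y^n_k$ is as in (\ref{eq:defscaled}). Dividing by $k$ and applying Lemma~\ref{bound} with $K:=n^{-1}I_n$ and $u:=Y^n_k$ then yields
\begin{equation*}
\dha(k^{-1}I_k,\, n^{-1}I_n) \leq \sup_{x\in[0,1]}|F_{Y^n_k}(x)-x| + \sup_{y\in[0,1]}|F^{-}_{Y^n_k}(y)-y| \quad\text{a.s.}
\end{equation*}

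Next I would fix $k$ and send $n\to\infty$. By (\ref{eq:almostsureconvergenceofsamplingsequence}) one has $Y^n_k \to (U_{1:k},\ldots,U_{k:k})$ almost surely, where $U=(U_i)_{i\in\bN}$ is the $U$-process corresponding to $\eta$. Writing $\sup_x|F_u(x)-x|$ as the maximum of the finitely many quantities $|j/k - u_{j:k}|$ and $|j/k - u_{j+1:k}|$ for $j=0,\ldots,k$ (and analogously for $F^-_u$), one sees that this functional depends continuously on $u$ at $k$-tuples with pairwise distinct entries. Since $(U_{1:k},\ldots,U_{k:k})$ has distinct entries almost surely, the right-hand side of the above bound converges a.s.\ as $n\to\infty$ to $\sup_x|F_U^k(x)-x|+\sup_y|F_U^{k,-}(y)-y|$, where $F_U^k$ denotes the empirical distribution function of $U_1,\ldots,U_k$. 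The classical Glivenko--Cantelli theorem applied to the iid uniform sample $(U_i)$ then drives this limiting quantity to $0$ almost surely as $k\to\infty$.

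To close the Cauchy argument, for almost every $\omega$ and any $\varepsilon>0$ I would first pick $k$ so large that the Glivenko--Cantelli bound is below $\varepsilon/3$, then $N\geq k$ so large that $\dha(k^{-1}I_k, n^{-1}I_n)<\varepsilon/2$ for all $n\geq N$; the triangle inequality then yields $\dha(m^{-1}I_m, n^{-1}I_n)<\varepsilon$ for all $m,n\geq N$. Completeness of $(\bISi,\dha)$ delivers the desired $\bISi$-valued limit $I_\infty$. I do not anticipate a serious obstacle here: the only step beyond direct assembly is the continuity of the empirical-deviation functional needed for the $n\to\infty$ limit with $k$ fixed, and this is immediate from the piecewise-constant structure of empirical distribution functions with a prescribed number of atoms.
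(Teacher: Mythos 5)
Your proof is correct and follows essentially the same route as the paper's: you reduce to an almost sure Cauchy argument in the complete space $(\bISi,\dha)$, feed the identity $I_k=\phi^{\infty}_k(n^{-1}I_n,Y^n_k)$ (from (\ref{eq:samplingas}) and Lemma~\ref{lemma:technical}~i)) into the uniform bound of Lemma~\ref{bound}, pass $n\to\infty$ with $k$ fixed via (\ref{eq:almostsureconvergenceofsamplingsequence}), and then use Glivenko--Cantelli in $k$. The only difference is presentational: you spell out the continuity of the empirical-deviation functional at tuples with distinct entries and the two-step choice of $k$ then $N$ in the triangle-inequality assembly, both of which the paper leaves implicit in the phrase ``converge almost surely towards zero as $n,m\to\infty$''.
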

	\begin{proof}
		We will prove that $(n^{-1}I_n)_{n\in\bN}$ is a Chauchy sequence almost surely, which is sufficient since $(\bISi,\dha)$ is complete. Let $n\leq m$ and $Y^m_n=n^{-1}\vj^{S_m}_n$ be defined like in (\ref{eq:defscaled}), where $(S_n)_{n\in\bN}$ is the permutation process corresponding to $\eta$. By (\ref{eq:samplingas}) and Lemma \ref{lemma:technical} i) one obtains
		$$n^{-1}I_n=n^{-1}\phi^{\infty}_n(m^{-1}I_m,Y^m_n)~\text{almost surely.}$$
		Let $F^{-}_{Y^m_n}$ and $F_{Y^m_n}$ be the functions associated to $Y^n_m$ like in Lemma \ref{bound} which then yields
		$$\dha(n^{-1}I_n,m^{-1}I_m)\leq \sup\nolimits_{x\in[0,1]}|F_{Y^m_n}(x)-x|+\sup\nolimits_{y\in[0,1]}|F^{-}_{Y^m_n}(y)-y|.$$
		Now for every fixed $n$ the vector $Y^m_n$ converges almost surely towards $(U_{1:n},\dots,U_{n:n})$ as $m\rightarrow\infty$, where $U=(U_i)_{i\in\bN}$ is the $U$-process associated to $\eta$, see  (\ref{eq:almostsureconvergenceofsamplingsequence}). Since the uniform distribution on $[0,1]$ is diffuse, the Glivenko-Cantelli theorem yields that both $\sup\nolimits_{x\in[0,1]}|F_{Y^m_n}(x)-x|$ and $\sup\nolimits_{y\in[0,1]}|F^{-}_{Y^m_n}(y)-y|$ converge almost surely towards zero as $n,m\rightarrow\infty$. 
	\end{proof}
	
	Next we present a lemma which we will state in a slightly more general form and prove in Section \ref{sec:CUP}. The proof of the theorem presented there is based on topological features of isolated points in the \emph{Sorgenfrey plane}.
	
	If $A,B$ are events, we will say that $A$ implies $B$ almost surely iff $A\cap B^C$ has probability zero. We will denote this by $A\ias B$. Consequently we will say that $A$ and $B$ are equivalent almost surely iff the symmetric difference $A\Delta B=A\cap B^C\cup A^C\cap B$ has probability zero and we will denote this by $A\eas B$, so $A\eas B~\Longleftrightarrow~(A\ias B~~\wedge~~ B\ias A)$.
	
	\begin{lemma}\label{lemma:ractanglecut}
		Let $U=(U_i)_{i\in\bN}$ be a $U$-process, $n\geq 2$ and $0\leq j_1<j_2<j_3<j_3<j_4\leq n+1$. Let $Y_i:=U_{j_i:n}$ for $i\in\{1,2,3,4\}$. Consider the random rectangle
		\begin{align*}
		R&:=~\left[Y_{1},Y_{2}\right]\times\left[Y_{3},Y_{4}\right]=\Big\{(x,y)\in\recto{}:Y_1\leq x\leq Y_2~\text{and}~Y_3\leq y\leq Y_4\Big\}\\
		\intertext{and the interior of of that random rectangle}
		\inte(R)&:=\left(Y_{1},Y_{2}\right)\times\left(Y_{3},Y_{4}\right)=\Big\{(x,y)\in\recto{}:Y_1< x< Y_2~\text{and}~Y_3< y< Y_4\Big\}.
		\end{align*}
		Let $I_{\infty}$ be a random variable with values in $(\bISi,\dha)$ and independent of $U$. Then the sets $\{I_{\infty}\cap R\neq \emptyset\}$ and  $\{I_{\infty}\cap\inte(R)\neq\emptyset\}$ are events such that
		$$I_{\infty}\cap R\neq\emptyset~\overset{a.s.}{\Longrightarrow}~I_{\infty}\cap \inte(R)\neq \emptyset.$$
		In words: The random rectangle $R$ almost surely intersects $I_{\infty}$ in its interior or not at all. 
	\end{lemma}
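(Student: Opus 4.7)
The plan is to condition on $I_\infty = K$ and, using independence from $U$, show via Fubini that it suffices to prove $P(K \cap R \neq \emptyset,\, K \cap \inte(R) = \emptyset) = 0$ for every deterministic $K \in \bISi$. Measurability of the two events follows from standard semicontinuity of intersection with closed/open sets under the Hausdorff topology, combined with the joint Borel dependence of $R$ and $\inte(R)$ on the order statistics $(Y_1, Y_2, Y_3, Y_4)$.

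The geometric heart of the argument is to locate a Sorgenfrey-isolated point of $K$ on $\partial R$ on the ``bad event'' $B := \{K \cap R \neq \emptyset,\, K \cap \inte(R) = \emptyset\}$, on which $K \cap R$ is a non-empty compact subset of $\partial R$. I would decompose $B$ into four sub-events according to which open edge of $R$ is hit by $K$, plus four corner sub-events $\{(Y_i, Y_j) \in K\}$ handled when no open edge is hit. For each edge-interior sub-event I pick an extremal $K$-point on that edge (e.g.\ the topmost $K$-point $(Y_1, y^*)$ on the left edge of $R$) and check, using extremality together with $K \cap \inte(R) = \emptyset$, that it is Sorgenfrey-isolated in $K$ in the appropriate orientation (NE for the left-edge-topmost extremal, with the analogous choices SE, NW, SW for the other edge-extremal pairs). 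For each corner sub-event I check that, in the absence of nearby edge-interior $K$-points, the corner itself is Sorgenfrey-isolated in $K$ in the unique orientation pointing from that corner into $\inte(R)$ (NE at the SW corner, NW at the SE corner, SE at the NW corner, SW at the NE corner).

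Once a Sorgenfrey-$\theta$-isolated point $p^*$ of $K$ with a coordinate equal to some $Y_i$ has been produced, I invoke the Sorgenfrey isolated-points theorem (stated in the introduction and to be proven in Section \ref{sec:CUP}): the $\theta$-isolated points of any subset of $\bR^2$ are contained in a countable union of graphs of strictly monotone functions (decreasing for NE or SW, increasing for NW or SE). Thus $p^*$ lies on some graph $G_k = \{(x, g_k(x)) : x \in D_k\}$, and the edge- or corner-specific condition gives e.g.\ $g_k(Y_1) \in [Y_3, Y_4]$. Combining this with $K \cap \inte(R) = \emptyset$ and the strict monotonicity of $g_k$ via a short neighborhood argument forces $Y_1$ to be a one-sided isolated point of the set $\tilde D_k := \{x \in D_k : (x, g_k(x)) \in K\}$; since one-sided isolated points of any subset of $\bR$ form a countable set and $Y_1$ has a continuous distribution, each $G_k$ contributes zero. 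A union bound over the countably many graphs $G_k$ and the eight sub-events yields $P(B) = 0$. The main obstacle will be the case analysis at the corners of $R$ and in edge-interior cases where the chosen extremal degenerates to a corner: the interplay between the corner, its two adjacent edges, and the right choice of Sorgenfrey orientation requires careful bookkeeping, and at most countably many jump discontinuities of $g_k$ per graph have to be handled separately in the reduction to one-sided isolation on $Y_i$.
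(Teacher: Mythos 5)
Your proposal follows the same overall architecture as the paper's proof: Fubini-reduce to a deterministic $K$ using independence of $I_\infty$ and $U$, handle measurability via closedness of the relevant sets, split the ``bad'' event into edge and corner contributions, and apply the Sorgenfrey isolated-points result (Proposition~\ref{thm:isolatedpoints}) together with the a.s.\ diffuseness of the conditional law of one order statistic given another. Your treatment of the corner sub-events essentially matches the paper's; the unaddressed boundary cases ($j_1=0$ or $j_4=n+1$, where $U_{0:n}=-1$ or $U_{n+1:n}=2$ make one rectangle side irrelevant) are a minor omission.

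The genuine gap is in the edge case. You want to take the topmost $K$-point $(Y_1,y^*)$ on the open left edge $\{Y_1\}\times(Y_3,Y_4)$ and show it is NE-Sorgenfrey-isolated. This extremal need not exist: for $K\in\bISi$ containing the whole vertical segment $\{Y_1\}\times[Y_3,Y_4]$ (which is compatible with $K\cap\inte(R)=\emptyset$), the supremum of the hit heights is $Y_4$ and is not attained in the open interval, no interior-edge $K$-point is vertically one-sided isolated (so none is NE- or SE-Sorgenfrey isolated in $K$), and the corner $(Y_1,Y_4)$ is not SE-isolated either, since segment points accumulate at it from below, nor is it forced to be isolated in any other orientation. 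Your machinery then produces no isolated point to plug into Proposition~\ref{thm:isolatedpoints}, and the sub-event falls through the case analysis you acknowledge as the ``main obstacle.'' The paper sidesteps this entirely: for the edges it uses the strictly weaker condition of being isolated \emph{only to the east} (the set $W$ in the proof of Proposition~\ref{thm:AlmostSureInterxectionInInterior}). Every $K$-point on the open left edge automatically lies in $W$ because a shrunken copy of $\inte(R)$ serves as the witness rectangle, and the projection $\pi_1(W)$ is shown to be at most countable by a direct rational-witness argument; then diffuseness of the marginal law of $Y_1$ alone gives probability zero. Sorgenfrey isolation and the graph-covering are invoked only for the four corner sets. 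To repair your proof, replace the edge-extremal step by such a one-sided projection-countability argument rather than a full Sorgenfrey isolation.
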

	\begin{proof}
		See Section \ref{sec:CUP}.
	\end{proof}
	
	The next Lemma states that one can extend the almost sure equality $I_k=\phi^n_k(I_n,\vj^{S_n}_k)$ that holds almost surely for every $1\leq k\leq n$ to the case $n=\infty$. Hence in some sense, the maps $\phi^{\infty}_k$ are not just the algorithmic extension in the sense of Lemma \ref{lemma:technical}, but also the \emph{continuous extension}, continuous with respect to the randomized dynamics given by $\eta$. 
	
	\begin{lemma}\label{lemma:extensiontothelimit}
		Let $(I,\eta)$ be an erased-interval process and $I_{\infty}$ be the a.s. limit of $n^{-1}I_n$ according to Lemma \ref{lemma:asconvergence}. Let $(S_n)_{n\in\bN}$ and $(U_i)_{i\in\bN}$ be the permutation process and the $U$-process corresponding to $\eta$. For $n\geq k$ let $Y^n_k=n^{-1}\vj^{S_n}_k$ be like in (\ref{eq:almostsureconvergenceofsamplingsequence}). Then almost surely for every $k\in\bN$
		\begin{equation*}
		I_k=\lim\limits_{n\rightarrow\infty}\phi^{\infty}_k\left(n^{-1}I_n,Y^n_k\right)=\phi^{\infty}_k\Big(\lim\limits_{n\rightarrow\infty}\left(n^{-1}I_n,Y^n_k\right)\Big)=\phi^{\infty}_k\Big(I_{\infty},U_{1:k},\dots,U_{k:k}\Big).
		\end{equation*}
	\end{lemma}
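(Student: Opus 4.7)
The statement is a chain of three equalities, and the first and third are essentially bookkeeping. For the first, combining the almost sure identity $I_k=\phi^n_k(I_n,\vj^{S_n}_k)$ from~(\ref{eq:samplingas}) with Lemma~\ref{lemma:technical}~i) gives $I_k=\phi^{\infty}_k(n^{-1}I_n,Y^n_k)$ a.s.\ for every $n\geq k$, so the sequence on the right is a.s.\ eventually constant equal to $I_k$. For the third, the joint limit $\lim_{n\to\infty}(n^{-1}I_n,Y^n_k)$ exists a.s.\ and equals $(I_\infty,U_{1:k},\dots,U_{k:k})$ by Lemma~\ref{lemma:asconvergence} and~(\ref{eq:almostsureconvergenceofsamplingsequence}). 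The content of the lemma therefore lies in the middle equality, which asserts that $\phi^{\infty}_k$, although only measurable in general (Lemma~\ref{lemma:technical}~iii), is a.s.\ continuous at the random limit point $(I_\infty,U_{1:k},\dots,U_{k:k})$ along the specific approximating sequence $(n^{-1}I_n,Y^n_k)$.

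Since both sides of the middle equality take values in the finite set $\bIS(k)$, it suffices to check, for each pair $1\leq a<b\leq k$, that the membership $[a,b]\in\phi^{\infty}_k(\cdot)$ is eventually constant along the sequence and agrees with membership in $\phi^{\infty}_k(I_\infty,U_{1:k},\dots,U_{k:k})$. By the rectangle description~(\ref{eq:samplingREP}), this reduces to comparing the intersections $I_\infty\cap R_\infty$ and $n^{-1}I_n\cap R_n$, where
\begin{equation*}
R_\infty:=(U_{a-1:k},U_{a:k})\times(U_{b:k},U_{b+1:k}),\qquad R_n:=(Y^n_{k,a-1},Y^n_{k,a})\times(Y^n_{k,b},Y^n_{k,b+1}).
\end{equation*}
If $I_\infty\cap R_\infty\neq\emptyset$, pick $(x,y)$ in the intersection and approximate it by $(x_n,y_n)\in n^{-1}I_n$ via Lemma~\ref{lemma:convhausdorff}; the strict ordering of the $U_{i:k}$ and the a.s.\ convergence $Y^n_{k,i}\to U_{i:k}$ force $(x_n,y_n)\in R_n$ for all large~$n$. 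Conversely, if $I_\infty$ is disjoint from the closure $\overline R_\infty=[U_{a-1:k},U_{a:k}]\times[U_{b:k},U_{b+1:k}]$, then $d(I_\infty,\overline R_\infty)>0$ by compactness, and the Hausdorff convergence $n^{-1}I_n\to I_\infty$ together with $Y^n_k\to(U_{1:k},\dots,U_{k:k})$ implies $n^{-1}I_n\cap R_n=\emptyset$ for all large~$n$.

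The remaining case is the boundary case $I_\infty\cap\overline R_\infty\neq\emptyset$ but $I_\infty\cap R_\infty=\emptyset$, which is precisely what Lemma~\ref{lemma:ractanglecut} excludes with probability one, provided $I_\infty$ and $U$ are independent. I would establish that independence first: Definition~\ref{def:EIP}~(ii) gives $\eta_{n-1}\perp\cF_n$, and combined with the a.s.\ identity $\cF_{n-1}=\sigma(\eta_{n-1})\vee\cF_n$ a short downward induction shows that $(\eta_1,\dots,\eta_{n-1})$ is jointly independent of $\cF_n$ for every~$n$, whence $\eta\perp\cF_\infty$; since $U$ is a function of $\eta$ and $I_\infty=\lim n^{-1}I_n$ is $\cF_\infty$-measurable, the required independence follows. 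Lemma~\ref{lemma:ractanglecut} then applied to each of the countably many rectangles, indexed by $k$ and $1\leq a<b\leq k$ (taking $j_1=a-1,j_2=a,j_3=b,j_4=b+1$ and $n=k$), disposes of the boundary case outside a null set; a countable union of null sets yields the a.s.\ statement ``for every~$k$''. The main obstacle is precisely this boundary case, where the failure of continuity of $\phi^{\infty}_k$ has to be quarantined by the Sorgenfrey-plane argument behind Lemma~\ref{lemma:ractanglecut}; everything else is a convergence exercise.
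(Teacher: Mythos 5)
Your proof is correct and follows essentially the same route as the paper: reduce to the rectangle description~(\ref{eq:samplingREP}) for each pair $1\leq a<b\leq k$, use Hausdorff convergence and the a.s.\ identity $I_k=\phi^\infty_k(n^{-1}I_n,Y^n_k)$ for the non-boundary cases, and invoke Lemma~\ref{lemma:ractanglecut} to kill the boundary case; the paper organizes this as two set inclusions $I_k\subseteq\widehat{I_k}$ and $\widehat{I_k}\subseteq I_k$ rather than your three-case split, but the mathematics is the same. You are in fact slightly more careful than the paper in one respect: you verify the independence of $I_\infty$ and $U$ needed to apply Lemma~\ref{lemma:ractanglecut} inside the proof of this lemma, whereas the paper only establishes this independence later, in the proof of Theorem~\ref{thm:mainEIP}, via the independence of $\vec{S_n}$ and $I_n$ for each fixed $n$.
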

	\begin{proof}
		Let $Y^n_k=(Y^n_{k,1},\dots,Y^n_{k,k})$. With $\widehat{I_k}:=\phi^{\infty}_k(I_{\infty},U_{1:k},\dots,U_{k:k})$ we need to prove that $I_k=\widehat{I_k}$ almost surely. Since both $I_k$ and $\widehat{I_k}$ are random sets, we show that $I_k\subseteq \widehat{I_k}$ and $\widehat{I_k}\subseteq I_k$ almost surely. Since singleton sets are part of every interval system by definition, we only need to consider intervals $[a,b]$ with $1\leq a<b\leq k$. 
		
		'$\subseteq$':~~We will show that $I_k\subseteq \widehat{I_k}$ almost surely. By Lemma \ref{lemma:technical} i) it holds that $I_k=\phi^{\infty}_k(n^{-1}I_n,Y^n_k)$ almost surely for every $n\geq k$ and hence by (\ref{eq:samplingREP}) one obtains
		$$[a,b]\in I_k~~\eas~~n^{-1}I_n~\cap~(Y^n_{k,a-1},Y^n_{k,a})\times(Y^n_{k,b},Y^n_{k,b+1})~\neq\emptyset~\text{for all}~~n\geq k.$$
		This yields
		$$[a,b]\in I_k~~\ias~~n^{-1}I_n~\cap~[Y^n_{k,a-1},Y^n_{k,a}]\times[Y^n_{k,b},Y^n_{k,b+1}]~\neq\emptyset~~\text{for all}~~n\geq k.$$
		Now $n^{-1}I_n\rightarrow I_{\infty}$ and $[Y^n_{k,a-1},Y^n_{k,a}]\times[Y^n_{k,b},Y^n_{k,b+1}]\rightarrow[U_{a-1:k},U_{a:k}]\times[U_{b-1:k},U_{b:k}]$ almost surely, where both convergences take place in the space $(\cK(\recto{}),\dha)$. One can easily check that if $(K_n)_{n\in\bN}$ and $(G_n)_{n\in\bN}$ are two sequences of compact sets converging towards $K$ and $G$ and if $K_n\cap G_n\neq \emptyset$ for all $n$, then also $K\cap G\neq \emptyset$. This yields
		$$[a,b]\in I_k~~\ias~~I_{\infty}~\cap~[U_{a-1:k},U_{a:k}]\times[U_{b:k},U_{b+1:k}]~\neq\emptyset.$$
		Now we apply Lemma \ref{lemma:ractanglecut} and obtain
		$$[a,b]\in I_k~~\ias~~I_{\infty}~\cap~(U_{a-1:k},U_{a:k})\times(U_{b:k},U_{b+1:k})~\neq\emptyset,$$
		so $[a,b]\in I_k\ias [a,b]\in\phi^{\infty}_k(I_{\infty},U_{1:k},\dots,U_{k:k})=\widehat{I_k}$ and thus $I_k\subseteq \widehat{I_k}$ almost surely.
		
		'$\supseteq$':~~We will show that $\widehat{I_k}\subseteq I_k$ almost surely. If $[a,b]\in \widehat{I_k}$ there is by definition some point $z=(x,y)\in I_{\infty}$ such that $U_{a-1:k}<x<U_{a:k}\leq U_{b:k}<y<U_{b+1:k}$, due to (\ref{eq:samplingREP}). Now since $I_{\infty}$ is the almost sure limit of $n^{-1}I_n$ the point $z$ is the limit of some sequence $z_n=(x_n,y_n)\in n^{-1}I_n$. Since $Y^n_k\rightarrow (U_{1:k},\dots,U_{k:k})$ almost surely one obtains that almost surely $Y^n_{a-1,k}<x_n<Y^n_{a,k}\leq Y^n_{b,k}<y_n<Y^n_{b+1,k}$ holds for all but finitely many $n\geq k$. In particular,
		$$[a,b]\in \widehat{I_k}~\ias~n^{-1}I_n\cap (Y^n_{a-1:k},Y^n_{a,k})\times(Y^n_{b:k},Y^n_{b+1,k})\neq \emptyset~\text{for some $n\geq k$}.$$
		And so again by (\ref{eq:samplingREP})
		$$[a,b]\in \widehat{I_k}~\ias~[a,b]\in\phi^{\infty}_k(n^{-1}I_n,Y^n_k)~\text{for some $n\geq k$}.$$
		But because $I_k=\phi^{\infty}_k(n^{-1}I_n,Y^n_k)$ almost surely for all $n\geq k$ by Lemma \ref{lemma:technical} i), one obtains 
		$$[a,b]\in\widehat{I_k}~\ias~[a,b]\in I_k$$
		and so $\widehat{I_k}\subseteq I_k$ almost surely. 
	\end{proof}

	The next lemma is used to obtain the topological description of the space of ergodic erased-interval processes. 
	
	\begin{lemma}\label{lemma:continuity}
		Let $(U_i)_{i\in\bN}$ be an $U$-process and $(K_n)_{n\in\bN}\subset\bISi$ be a sequence in $\bISi$ that converges towards some $K\in\bISi$. Then, for every $k\in\bN$, the sequence
		$\phi^{\infty}_k(K_n,U_{1:k},\dots,U_{k:k})$ converges almost surely towards $\phi^{\infty}_k(K,U_{1:k},\dots,U_{k:k})$ as $n\rightarrow\infty$.
	\end{lemma}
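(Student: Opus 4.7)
The plan is to exploit the fact that $\bIS(k)$ is a finite set, so convergence in this space simply means eventual equality. Hence it suffices to show that almost surely, for every $1\le a<b\le k$, the relation $[a,b]\in\phi^{\infty}_k(K_n,U_{1:k},\dots,U_{k:k})$ agrees with $[a,b]\in\phi^{\infty}_k(K,U_{1:k},\dots,U_{k:k})$ for all sufficiently large $n$. By the characterization (\ref{eq:samplingREP}), this reduces to controlling the random open rectangle
$$R:=\left(U_{a-1:k},U_{a:k}\right)\times\left(U_{b:k},U_{b+1:k}\right),$$
namely showing that a.s.\ $K_n\cap R\neq\emptyset$ iff $K\cap R\neq\emptyset$ for all but finitely many $n$. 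Since there are only finitely many pairs $(a,b)$, a union over null sets suffices.

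Next, I would split according to the behavior of $K$ with respect to the closed rectangle $\bar R$ and its interior $R$, giving two easy cases. If $K\cap\bar R=\emptyset$, then by compactness $d(K,\bar R)>0$, and since $\dha(K_n,K)\to 0$ we get $K_n\cap\bar R=\emptyset$, and a fortiori $K_n\cap R=\emptyset$, for all large $n$. If instead $K\cap R\neq\emptyset$, pick any $z\in K\cap R$; by Hausdorff convergence (Lemma \ref{lemma:convhausdorff}) there is $z_n\in K_n$ with $z_n\to z$, and since $R$ is open, $z_n\in R$ for large $n$, so $K_n\cap R\neq\emptyset$ eventually. In both cases, the indicator of $[a,b]\in\phi^{\infty}_k(K_n,\cdot)$ stabilizes to that of $[a,b]\in\phi^{\infty}_k(K,\cdot)$.

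The remaining scenario is $K\cap\bar R\neq\emptyset$ but $K\cap R=\emptyset$, i.e.\ $K$ meets $\bar R$ only along its topological boundary. This is the only delicate case, and it is precisely where Lemma \ref{lemma:ractanglecut} enters: applying it to the deterministic random variable $I_{\infty}:=K$ (which is trivially independent of $U$) yields
$$K\cap\bar R\neq\emptyset\ \overset{a.s.}{\Longrightarrow}\ K\cap R\neq\emptyset,$$
so this boundary case occurs only on a null set.

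The main obstacle is thus this boundary issue, and the key input is Lemma \ref{lemma:ractanglecut}; once that is invoked, the rest is a routine compactness/Hausdorff-convergence argument combined with finiteness of $\bIS(k)$.
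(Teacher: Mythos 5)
Your proof is correct and follows essentially the same strategy as the paper's: reduce to a single interval $[a,b]$ via finiteness of $\bIS(k)$, handle the generic cases by Hausdorff-convergence arguments on the open rectangle, and dispose of the boundary case (where $K$ meets $\bar R$ only on its topological boundary) by applying Lemma~\ref{lemma:ractanglecut} with $I_{\infty}=K$ deterministic. The only cosmetic difference is organizational: you use a three-way case split on the sample space with a positive-distance argument when $K\cap\bar R=\emptyset$, whereas the paper proves two directional implications and uses subsequence extraction plus compactness to land a limit point in $K\cap\bar R$; the key inputs are identical.
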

	\begin{proof}
		Since the RVs under consideration now take values in a discrete space, convergence of a sequence means that the sequence stays finally constant. We fix some $k\in\bN$ and some $1\leq a<b\leq k$. 
		
		If $[a,b]\in \phi^{\infty}_k(K,U_{1:k},\dots,U_{k:k})$, then $K\cap (U_{a-1:k},U_{a:k})\cap (U_{b:k},U_{b+1:k})\neq \emptyset$ by (\ref{eq:samplingREP}). Since $K_n\rightarrow K$, there is a sequence $z_n=(x_n,y_n)\in K_n$ that converges towards some point $z=(x,y)\in K$ with $U_{a-1:k}<x<U_{a:k}\leq U_{b:k}<y<U_{b+1:k}$. Then for all but finitely many $n$, the same inequality holds for $(x_n,y_n)$ instead of $(x,y)$ and so $[a,b]\in \phi^{\infty}_k(K_n,U_{1:k},\dots,U_{k:k})$ for all but finitely many $n$. Since interval systems always have finitely many elements, we have established that $\phi^{\infty}_k(K,U_{1:k},\dots,U_{k:k})\subseteq \phi^{\infty}_k(K_n,U_{1:k},\dots,U_{k:k})$ almost surely for all but finitely many $n$.
		
		Now let $[a,b]$ be such that $[a,b]\in\Phi^{\infty}_k(K_n,U_{1:k},\dots,U_{k:k})$ for infinitely many $n$. So there is a subsequence $(n_m)$ of $\bN$ and points $z_{n_m}=(x_{n_m},y_{n_m})\in K_{n_m}$ with 
		$U_{a-1:k}<x_{n_m}<U_{a:k}\leq U_{b:k}<y_{n_m}<U_{b+1:k}$ for all $m$. Now since $\recto{}$ is compact, the sequence $(z_{n_m})$ has a further subsequence that converges towards some $z=(x,y)$. Since $K_n\rightarrow K$, one has that $z\in K$ and furthermore $U_{a-1:k}\leq x\leq U_{a:k}\leq U_{b:k}\leq y\leq U_{b+1:k}$, so $K\cap [U_{a-1:k},U_{a:k}]\times[U_{b:k},U_{b+1:k}]\neq \emptyset$. Now with Lemma \ref{lemma:ractanglecut} one finally obtains 
		$$[a,b]\in \phi^{\infty}_k(K_n,U_{1:k},\dots,U_{k:k})~\text{for infinitly many $n$}~\ias~[a,b]\in\phi^{\infty}_k(K,U_{1:k},\dots,U_{k:k}).$$
		This completes the proof.
	\end{proof}
	
	Now we have all the ingredients we need to prove our first main theorem.
	
	\begin{proof}[Proof of Theorem \ref{thm:mainEIP}]
		Let $U=(U_i)_{i\in\bN}$ be an $U$-process and let $K\in\bISi$. Let $\eta$ be the eraser process corresponding to $U$. Then by the measurability of $\phi^{\infty}_k$ for every $k$ (Lemma \ref{lemma:technical}, iii)), the object 
		$$(I,\eta):=\Big(\phi^{\infty}_n(K,U_{1:n},\dots,U_{n:n}),\eta_n\Big)_{n\in\bN}$$
		introduced in (\ref{def:sampledEIP}) is a stochastic process. Let $\law(K)$ be its law. We will now argue that $\law(K)\in\ex(\cMI)$, so that the above defined process is an ergodic EIP. The first defining property of an erased-interval process is obvious. The third property follows from Lemma \ref{lemma:technical}, ii). So we need to show that $\eta_n$ is independent of $\cF_{n+1}=\sigma(I_m,\eta_m:m\geq n+1)$ for every $n$. By definition, $I_m$ is measurable with respect to $\sigma(U_{1:m},\dots,U_{m:m})$ and the latter is included in $\sigma(U_{1:n+1},\dots,U_{n+1:n+1},U_{n+2},U_{n+3},\dots)$ for every $m\geq n+1$. One has the almost sure equality of $\sigma$-fields
		$$\sigma(U_{1:n+1},\dots,U_{n+1:n+1})\vee\sigma(U_{n+2},U_{n+3},\dots)~~~\text{and}~~~\sigma(\eta_{n+1},\eta_{n+2},\dots).$$
		Since $\eta$ consists of independent RVs, $\eta_n$ is thus independent of $\cF_{n+1}$ for every $n\in\bN$, so $(I,\eta)$ really is an erased-interval process. 
		
		Now we will show that it is ergodic, so that $\cF_{\infty}=\bigcap\nolimits_{n\in\bN}\cF_n$ is trivial almost surely. By elementary arguments one can show that $\cF_{\infty}=\bigcap\nolimits_{n\in\bN}\sigma(I_m,\eta_m:m\geq n)$ is a.s. equal to $\bigcap\nolimits_{n\in\bN}\sigma(I_m:m\geq n)$. Since $I_m$ is measurable with respect to $\sigma(U_{1:m},\dots,U_{m:m})$ by construction, the latter $\sigma$-field is included in the exchangeable $\sigma$-field of $U$ and thus is trivial by Hewitt-Savage zero-one law. So we have proved that $\law(K)\in\ex(\cMI)$ for every $K\in\bISi$.
		
		Now let $(I,\eta)=(I_n,\eta_n)_{n\in\bN}$ be an arbitrary erased-interval process. By Lemma \ref{lemma:asconvergence} $n^{-1}I_n$ converges almost surely towards some $\bISi$-valued RV $I_{\infty}$ as $n\rightarrow\infty$. Let $U$ be the $U$-process corresponding to $\eta$ and $S$ the corresponding permutation process. For $n\in\bN$ define $\vec{S_n}:=n^{-1}(S_n^{-1}(1),S_n^{-1}(2),\dots,S_n^{-1}(n),0,0,\dots)\in[0,1]^{\bN}$, so $\vec{S_n}$ is considered to be a $[0,1]^{\bN}$-valued RV.  $\vec{S_n}$ converges almost surely towards $(U_1,U_2,\dots)$. Since $\vec{S_n}$ and $I_n$ are independent for every $n$, so are the a.s. limits $U$ and $I_{\infty}$.
		
		By Lemma \ref{lemma:extensiontothelimit} it holds that $I_k=\phi^{\infty}(I_{\infty},U_{1:k},\dots,U_{k:k})~~\text{almost surely for every $k\in\bN$}$. If $(I,\eta)$ is ergodic, then $I_{\infty}$ is almost surely constant. This yields that the map $\bISi\rightarrow\ex(\cMI), K\mapsto \law(K)$ is surjective. The map is also injective: For this it suffice to show that 
		$$\dha(k^{-1}\phi^{\infty}_k(K,U_{1:k},\dots,U_{k:k}),K)\rightarrow 0$$
		almost surely for $k\rightarrow\infty$. Since then, as limits are unique, $\law(K)$ and $\law(K')$ are clearly different for different $K,K'\in\bISi$. That the above Hausdorff distance tends to zero now follows easily with the general bound obtained in Lemma \ref{bound} and the Glivenko-Cantelli theorem. So we have proven that $K\mapsto \law(K)$ is bijective and Lemma \ref{lemma:extensiontothelimit} yields that every erased-interval process posses the described a.s. representation. 
		
		The last statement in Theorem \ref{thm:mainEIP} concerning the conditional distributions is immediate from the fact that the random objects $I_{\infty}$ and $U$ that occur in the a.s. representation are independent. 
		
		It remains to show that the map $K\mapsto \law(K)$ is a homeomorphism. Since it is bijective, $\bISi$ is compact and $\ex(\cMI)$ is Hausdorff, we only need to show that it is continuous. So we need to show that if $K_n\rightarrow K$ in $(\bISi,\dha)$ then $\law(K_n)\rightarrow \law(K)$. Fix some $U$-process and consider $(I^n,\eta)=(\phi^{\infty}_k(K_n,U_{1:k},\dots,U_{k:k}),\eta_k)_{k\in\bN}$, where $\eta$ is the eraser process corresponding to $U$. Now for every $n$ one has $\cL(I^n,\eta)=\law(K_n)$. By Lemma \ref{lemma:continuity} one has $(I^n,\eta)\rightarrow (I,\eta)$ almost surely as $n\rightarrow\infty$, where $(I,\eta)$ is constructed by sampling from $K$ via $U$. Now almost sure convergence implies convergence in law. 
	\end{proof}

	\begin{proof}[Proof of Theorem \ref{thm:mainMAP}]
		We first argue that $\cL(I,\eta)\mapsto \cL(H)$ is a surjective, affine and continuous map from $\cMI$ to $\cME$. 
		 
		So let $(I_n,\eta_n)_{n\in\bN}$ be an erased-interval process and let $S=(S_n)_{n\in\bN}$ be the permutation process corresponding to $\eta$. Let $H_n:=S_n(I_n)$. We first show that $H=(H_n)_{n\in\bN}$ is an exchangeable interval hypergraph on $\bN$. The exchangeability follows from the fact that $S_n$ is a uniform permutation independent of $I_n$. Now for every $\pi\in\bS_n$ one has $\pi(H_n)=\pi(S_n(I_n))=\pi\circ S_n(I_n)$. Now $\pi\circ S_n$ is again a uniform permutation independent of $I_n$, so $\pi(H_n)\sim H_n$ for every $n\in\bN$. Now we have that $I_n=\phi^{n+1}_n(I_{n+1},S_{n+1}^{-1}([n]))$ and the latter term is by Lemma \ref{lemma:algorithmic} equal to $(S_{n+1})_{|n}^{-1}((S_{n+1}(I_{n+1}))_{|n})$. Now since $S_n=(S_{n+1})_{|n}$  applying $S_n(\cdot)$ on both sides of $I_n=\phi^{n+1}_n(I_{n+1},S_{n+1}^{-1}([n]))$ almost surely yields $H_n=(H_{n+1})_{|n}$. Hence $H$ is an exchangeable interval hypergraph on $\bN$. The map $\cL(I,\eta)\mapsto \cL(H)$ is clearly affine and continuous, so we need to argue that it is surjective.
		
		Take some arbitrary exchangeable interval hypergraph $H=(H_n)_{n\in\bN}$ and perform the following steps:
		\begin{enumerate}
			\item As explained in Section \ref{sec:INTRO}, given some interval hypergraph $\hH\in\bIH(n)$ there exists some $\hI\in\bIS(n)$ and a permutation $\pi\in\bS_n$ such that $\hH=\pi(\hI)$. For every $\hH$ fix some $\hI_{\hH}\in\bIS(n)$ and a permutation $\pi_{\hH}\in\bS_n$ with $\hH=\pi_{\hH}(\hI_{\hH})$.
			\item Consider the sequence $\hI_{H_1},\hI_{H_2},\dots$ of random interval systems, so $\hI_{H_n}$ is a $\bIS(n)$-valued RV for every $n$. If $S_n$ is a uniform random permutation of $[n]$ independent of $H_n$, then $S_n(\hI_{H_n})$ has the same law as $H_n$: One has that $\hI_{H_n}=\pi_{H_n}^{-1}(H_n)$ and so $S_n(\hI_{H_n})=S_n\circ\pi_{H_n}^{-1}(H_n)$. The random permutation $S_n\circ \pi_{H_n}^{-1}$ is again uniform and independent of $H_n$ and the claim follows by exchangeability of $H_n$.
			\item Let $(S_n)_{n\in\bN}$ be a permutation process independent of $H$. For all $k,n\in\bN$ define
			$$I^n_k:=\begin{cases}
			\phi^n_k(\hI_{H_n},\vj^{S_n}_k),&\text{if $1\leq k\leq n$}\\
			\text{arbitrary element of $\bIS(k)$},&\text{else.}
			\end{cases}$$
			Now we have defined, for each $n$, a stochastic process $(I^n,\eta)=(I^n_k,\eta_k)_{k\in\bN}$ such that $I^n_k\in\bIS(k)$ for every $k$, where $\eta$ is the eraser process corresponding to $S$. The law of each process $(I^n,\eta)$ is a member of the compact metrizable space $\cM_1(\prod\nolimits_{k\in\bN}\bIS(k)\times[k+1])$. Denote the law of the $n$-th process by $L_n$. 
			\item The sequence $(L_n)_{n\in\bN}$ has a convergent subsequence $(L_{n_k})_{k\in\bN}$, let $L$ be its limit and $(I,\eta)=(I_k,\eta_k)_{k\in\bN}$ be a stochastic process with law $L$. 
			\item We claim that $(I,\eta)$ is an erased-interval process and that its law, namely $L$, serves as the desired preimage for $\cL(H)$ with respect to the map under consideration: 
			\begin{enumerate}
				\item $L\in\cMI$:~~For every fixed $n\in\bN$ the process $(\phi^n_k(\hI_{H_n},\vj^{S_n}_k),\eta_k)_{1\leq k\leq n}$ is a finite Markov chain with co-transition probabilities $\theta$ introduced in (\ref{eq:theta}). Now for every subsequence $n_k$ tending to infinity, the first $n_k$-component part of the law of $L_{n_k}$ is such a Markov chain with co-transitions given by $\theta$. Elementary arguments show that the limit law $L$ is thus in total the law of a Markov chain with co-transitions given by $\theta$, thus $L\in\cMI$. 
				\item By the algorithmic expression of $\phi^n_k$ presented in Lemma \ref{lemma:algorithmic} for every $1\leq k\leq n$ one obtains $S_k(I^n_k)=(S_n(\hI_{H_n}))_{|k}$. Now in (2) it was explained that $S_n(\hI_{H_n})$ has the same law as $H_n$ and so $S_k(I^n_k)$ has the same law as $H_k$. This proves that $\cMI\rightarrow\cME, \law(I,\eta)\mapsto \law(H)$ is surjective.
			\end{enumerate}
		\end{enumerate}
		The concrete representation of $H_n=S_n(I_n)$ follows directly from the definitions, it holds that $I_n=\phi^{\infty}_n(I_{\infty},U_{1:n},\dots,U_{n:n})$ almost surely and by that  
		\begin{align*}
		S_n(I_n)=\Bigg\{\Big\{j\in[n]:x<U_j<y\Big\}:(x,y)\in I_{\infty}\Bigg\}\cup\Big\{\{j\}:j\in[n]\Big\}\cup\{\emptyset\}
		\end{align*}
		almost surely. This completes the proof.
	\end{proof}
	
	\section{intersections of random sets}\label{sec:CUP}	
	
	In this section we will prove Lemma \ref{lemma:ractanglecut} which is used at two crucial points in the proof of Theorem \ref{thm:mainEIP}. For this we will establish the 
	following:
	
	\begin{proposition}\label{thm:AlmostSureInterxectionInInterior}
		Let $(Y_1,Y_2,Y_3,Y_4)$ be a $[0,1]^4_<$-valued random vector such that for all $i\neq j$ the conditional law of $Y_i$ given $Y_j$ is almost surely diffuse. Consider the random rectangle
		\begin{align*}
		R&:=~\left[Y_{1},Y_{2}\right]\times\left[Y_{3},Y_{4}\right]=\Big\{(x,y)\in\recto{}:Y_1\leq x\leq Y_2~\text{and}~Y_3\leq y\leq Y_4\Big\}\\
		\intertext{and the interior of of that random rectangle}
		\inte(R)&:=\left(Y_{1},Y_{2}\right)\times\left(Y_{3},Y_{4}\right)=\Big\{(x,y)\in\recto{}:Y_1< x< Y_2~\text{and}~Y_3< y< Y_4\Big\}.
		\end{align*}
		Let $K\subseteq\recto{}$ be any non-empty compact subset. Then the random rectangle $R$ almost surely intersects $K$ in its interior or not at all, more formally: For every $K\in\cK(\recto{})$ the set $\{K\cap R\neq \emptyset, K\cap\inte(R)=\emptyset\}$ is an event with 
		$\bP\big(K\cap R\neq \emptyset, K\cap\inte(R)=\emptyset\big)=0$.
	\end{proposition}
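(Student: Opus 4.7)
The aim is to show $\bP(B)=0$ for the event $B:=\{K\cap R\neq\emptyset,\,K\cap\inte(R)=\emptyset\}$; the strategy is to reduce $B$ to a statement about Sorgenfrey-isolated points of $K$, invoke the topological fact about such points from the introduction, and then use the diffuse-conditional hypothesis. Measurability of $B$ is routine: the map $(Y_1,\dots,Y_4)\mapsto R\in\cK(\recto{})$ is Hausdorff-continuous, the hitting functional $\{K\cap R\neq\emptyset\}$ is Borel, and similarly for $\inte(R)$.

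On $B$, the compact set $K\cap R$ lies in $\partial R$, so $K$ meets at least one of the four edges; by reflection symmetry it suffices to handle the left-edge sub-event $B_l$. On $B_l$, set $p=(Y_1,y^{\ast})$ with $y^{\ast}:=\max\{y\in[Y_3,Y_4]:(Y_1,y)\in K\}$, which exists by compactness. I would then establish that when $y^{\ast}\in(Y_3,Y_4)$, $p$ is NE-isolated in $K$ in the Sorgenfrey sense: for any $0<\epsilon<Y_2-Y_1$ and $0<\epsilon'<\min(Y_4-y^{\ast},\,y^{\ast}-Y_3)$ the half-open rectangle $[Y_1,Y_1+\epsilon)\times[y^{\ast},y^{\ast}+\epsilon')$ contains no $K$-point other than $p$. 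Indeed, maximality of $y^{\ast}$ forbids $K$-points on $\{Y_1\}\times(y^{\ast},y^{\ast}+\epsilon')\subseteq\{Y_1\}\times(y^{\ast},Y_4]$, and the open strip $(Y_1,Y_1+\epsilon)\times[y^{\ast},y^{\ast}+\epsilon')$ lies inside $\inte(R)$, hence is disjoint from $K$ by assumption. Applying the Sorgenfrey-isolated-points fact stated in Section~\ref{sec:INTRO} yields countably many strictly decreasing functions $f_n:D_n\to\bR$ whose graphs cover the NE-isolated points of $K$, so on this sub-event $y^{\ast}=f_n(Y_1)$ for some $n$.

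For each such $n$, the bad event forces $(Y_1,f_n(Y_1))\in K$ together with $Y_3\leq f_n(Y_1)\leq Y_4$ and, via the maximality condition, a further squeezing of $Y_4$ (or $Y_3$) against $f_n(Y_1)$; a conditioning-plus-Fubini argument based on the pairwise diffuse-conditional hypothesis then gives probability zero for each $n$. Summing over countably many $n$ and over the three remaining Sorgenfrey orientations NW, SE, SW (one per edge of $R$, obtained from the topological fact by axis reflections), and handling the corner cases $y^{\ast}\in\{Y_3,Y_4\}$ through the direct pairwise-diffuse pinning of $(Y_1,Y_3)\in K$ or $(Y_1,Y_4)\in K$, yields $\bP(B)=0$. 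The main technical obstacle will be making the pinning step precise: identifying, for each branch, which pair $(Y_i,Y_j)$ is driven onto a 1-dimensional deterministic curve by the combination of isolation and maximality, at which point the diffuse conditional hypothesis takes over routinely.
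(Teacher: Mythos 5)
Your setup is on the right track --- reduce $B$ to boundary intersections, locate the hit point, and combine Sorgenfrey-type isolation with the diffuse-conditional hypothesis --- and your NE-isolation claim for $p=(Y_1,y^{\ast})$ is correct. But the edge-interior branch $y^{\ast}\in(Y_3,Y_4)$ has a structural gap, not a ``make it precise'' issue. Proposition~\ref{thm:isolatedpoints} gives you $y^{\ast}=f_n(Y_1)$ for some decreasing $f_n$, but $y^{\ast}$ is \emph{not} one of the coordinates $Y_j$; it is a functional of $K$ and $(Y_1,Y_3,Y_4)$ through the max. The diffuse-conditional hypothesis only kills events that pin a pair $(Y_i,Y_j)$ onto a deterministic curve, and here the only constraint you actually obtain on the $Y_j$'s is $Y_3<f_n(Y_1)<Y_4$, which is not a null event. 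The ``further squeezing of $Y_4$ (or $Y_3$) against $f_n(Y_1)$'' you invoke is not forced by maximality: maximality of $y^{\ast}$ only rules out $K$-points on $\{Y_1\}\times(y^{\ast},Y_4]$; it yields no relation between $y^{\ast}$ and $Y_3$ or $Y_4$. Also, your corner-case remark is off on its own: $\bP((Y_1,Y_3)\in K)$ need not vanish (take $K=\recto{}$), so the ``direct pinning'' by membership in $K$ alone is not enough.

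The paper avoids this by reserving Proposition~\ref{thm:isolatedpoints} for the corners only, and using a \emph{simpler} countability device for the edge-interiors. It introduces the set $W$ of points $(x,y)\in K$ that admit witnesses $x<b<c<y<d$ with $(x,b)\times(c,d)\cap K=\emptyset$, and shows $\pi_1(W)$ is at most countable by a rational-triple injection (choose rationals $q\in(x,b)$, $r\in(c,y)$, $s\in(y,d)$; two distinct $x$-values cannot share a triple since the larger $x$ would land in the empty rectangle of the smaller). The bad edge-interior event forces $Y_1\in\pi_1(W)$ (take $(Y_2,Y_3,Y_4)$ as witnesses, the whole of $\inte(R)$ being disjoint from $K$), and $Y_1$ is diffuse, so this is null. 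For a corner hit at, say, $(Y_1,Y_3)$ with $K$ missing the interiors of both adjacent edges, the bad event does force genuine Sorgenfrey isolation of $(Y_1,Y_3)$, whence $Y_3=f_n(Y_1)$ and the diffuse conditional applies. So the correct decomposition is: countability of the four edge sets $W,E,S,N$ plus Proposition~\ref{thm:isolatedpoints} for the four corner sets --- precisely the claim $(\star)$ in the paper's proof. You should restructure your case split along those lines.
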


	See Fig. \ref{fig:RandomIntersection} for a visualization.

	\begin{figure}
		\centering
		\includegraphics[width=1.05\textwidth]{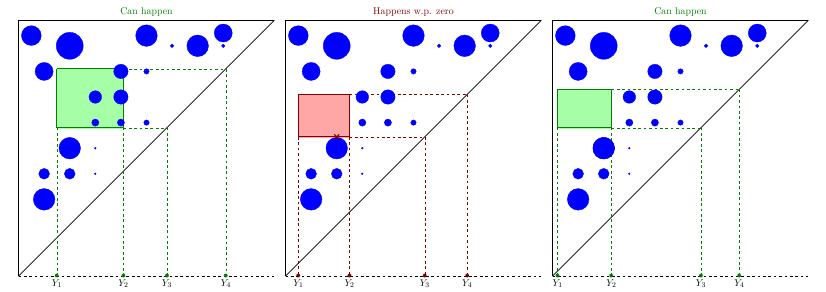}
		\caption{The compact set $K$ in blue. With probability one the left or the right case appears, provided the random rectangle has the above stated properties. The middle case, intersection only on the boundary, does not appear almost surely.}
		\label{fig:RandomIntersection}
	\end{figure}

	Our proof of this theorem relies on a topological feature of the \emph{Sorgenfrey plane}. The Sorgenfrey plane is a topological space $\cS^2$ on the set of points $\bR^2$ where we choose the set of rectangles of the form $[a,b)\times[c,d)$ with $a<b$ and $c<d$ as a basis for the topology. This topology really refines the usual topology on $\bR^2$, thus \emph{more subsets of $\bR^2$ are open in $\cS^2$}. In fact, the topological space $\cS^2$ is no longer 'nice': Although it is separable, it is not metrizable. Because there are more open sets, it could happen that in a given subset $A\subseteq \bR^2$ more points $x\in A$ are \emph{isolated} than in the euclidean case. A point $x\in A$ is called isolated, if there is an open set $U\subseteq \bR^2$ with $U\cap A=\{x\}$. Denote the set of isolated points of $A\subset \bR^2$ by $\iso(A)$. In the usual euclidean topology on $\bR^2$ sets of isolated points are at most countable. This feature is lost in $\cS^2$, in fact the set of isolated points may well be uncountable. For example in the uncountable set $\{(x,-x):x\in\bR\}\subseteq\bR^2$ \emph{every point} is isolated: for some point $(x,-x)$ just take the isolating and open neighborhood $[x,x+1)\times[-x,-x+1)$. However, we will show that the set of isolated points for any given subset in the Sorgenfrey plane is 'small enough for our purposes': The set $\{(x,-x):x\in\bR\}$, although uncountable, is just the graph of the strictly decreasing function $f(x)=-x$ and a similar feature holds for any given set of isolated points in the Sorgenfrey plane. We do not know if the following proposition is a new result, hence we prove it:
	
	\begin{proposition}\label{thm:isolatedpoints}
		For every subset $A\subseteq\bR^2$ of the Sorgenfrey plane the set of isolated points $\iso(A)$ can be covered by the union of countably many graphs of strictly decreasing functions $f_i:\bR\rightarrow\bR, i\in\bN$.
	\end{proposition}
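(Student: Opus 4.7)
The plan is to classify isolated points by a ``witness size'' and then perform a half-open grid refinement. A point $(x,y)\in A$ is Sorgenfrey-isolated iff there exists $\epsilon>0$ with $A\cap\bigl([x,x+\epsilon)\times[y,y+\epsilon)\bigr)=\{(x,y)\}$. For $n\in\bN$ set
\[
A_n:=\Bigl\{(x,y)\in\iso(A):A\cap\bigl([x,x+\tfrac1n)\times[y,y+\tfrac1n)\bigr)=\{(x,y)\}\Bigr\},
\]
so that $\iso(A)=\bigcup_{n\geq 1}A_n$. Next I refine each $A_n$ by the half-open $1/n$-grid: for $k,l\in\mathbb{Z}$ put
\[
A_{n,k,l}:=A_n\cap\bigl([l/n,(l+1)/n)\times[k/n,(k+1)/n)\bigr),
\]
which yields a countable family $\{A_{n,k,l}\}_{n\geq 1,\,k,l\in\mathbb{Z}}$ covering $\iso(A)$.

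The main combinatorial step is to show that each $A_{n,k,l}$ is the graph of a strictly decreasing partial function. Take distinct $(x_1,y_1),(x_2,y_2)\in A_{n,k,l}$ and assume without loss of generality $x_1\leq x_2$. By the grid one has $|x_1-x_2|<1/n$ and $|y_1-y_2|<1/n$. The isolation witness for $(x_1,y_1)$ forbids $(x_2,y_2)\in[x_1,x_1+\tfrac1n)\times[y_1,y_1+\tfrac1n)$; since $x_2\in[x_1,x_1+\tfrac1n)$, this forces $y_2\notin[y_1,y_1+\tfrac1n)$, and $|y_1-y_2|<1/n$ in turn rules out $y_2\geq y_1+\tfrac1n$, leaving $y_2<y_1$. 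In particular $x_1=x_2$ is impossible, so on $A_{n,k,l}$ one has $x_1<x_2\Rightarrow y_1>y_2$, the first-coordinate projection is injective, and the associated function $g_{n,k,l}\colon D_{n,k,l}\to\bR$ with $D_{n,k,l}:=\pi_1(A_{n,k,l})$ is strictly decreasing.

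It then remains to extend each $g_{n,k,l}$ to a strictly decreasing total function $f_{n,k,l}\colon\bR\to\bR$, which is a routine bookkeeping exercise: on each component of the complement of $D_{n,k,l}$ inside $[l/n,(l+1)/n)$ one interpolates strictly monotonically between the one-sided limits of $g_{n,k,l}$ (which exist because $g_{n,k,l}$ is monotone and bounded), and on the two outer tails $(-\infty,l/n)$ and $[(l+1)/n,\infty)$ one prescribes steep strictly decreasing affine pieces offset to lie strictly above resp.\ below the range of $g_{n,k,l}$; no continuity is required. The heart of the argument, and the only point of real interest, is the directional rigidity of the Sorgenfrey basic neighborhoods: their ``up-and-right'' shape is exactly what forces two nearby isolated points to lie on a strictly decreasing arc rather than on an arbitrary curve, which is what makes the grid partition deliver strictly decreasing cells. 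The extension step presents no essential obstacle, although some care is needed because $D_{n,k,l}$ may be an uncountable subset of complicated topology within its $1/n$-interval.
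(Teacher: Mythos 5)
Your plan---stratify $\iso(A)$ by witness size $1/n$, subdivide by a countable covering of the plane at that scale, and use the ``up-and-right'' shape of Sorgenfrey basic neighborhoods to force a strictly decreasing arrangement within each cell---is exactly the paper's argument. The paper covers $\bR^2$ by overlapping closed squares of side $\varepsilon/2$ where you partition by a half-open $1/n$-grid; this is a cosmetic difference, and your rigidity computation for two points in a common cell is correct.

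The gap is the final extension step, which you dismiss as routine bookkeeping. Interpolating strictly monotonically between one-sided limits is \emph{not} always possible, because those limits can coincide. Take $\delta\in(0,1/(2n))$ and
$$A:=\{(0,\delta)\}\cup\bigl\{(d,2\delta-d):\delta<d<2\delta\bigr\}\subseteq[0,1/n)^2.$$
Every point of $A$ is Sorgenfrey-isolated in $A$ with witness size $1/n$, so in your notation $A_{n,0,0}=A$, $D_{n,0,0}=\{0\}\cup(\delta,2\delta)$, $g_{n,0,0}(0)=\delta$ and $g_{n,0,0}(d)=2\delta-d$. Any strictly decreasing $f\colon\bR\to\bR$ with $\graph(g_{n,0,0})\subseteq\graph(f)$ would have to satisfy $f(\delta)<f(0)=\delta$ and $f(\delta)>f(d)=2\delta-d$ for every $d\in(\delta,2\delta)$, hence $f(\delta)\geq\delta$---a contradiction. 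So $A_{n,0,0}$ cannot be covered by a single graph of a strictly decreasing total function, and your $f_{n,k,l}$ does not exist for this cell. (The same example also defeats the paper's unproved assertion that each $\iso^\varepsilon(A)\cap U(z,\varepsilon)$ is covered by the graph of a single strictly decreasing function on a closed interval, so this is a genuine shared gap rather than a divergence from the paper.) The proposition is nonetheless true, and at the only place it is used (Proposition~\ref{thm:AlmostSureInterxectionInInterior}) one only needs the $f_i$ to be monotone, hence Borel; the non-increasing extension $\bar g(x):=\inf\{g_{n,k,l}(d):d\in D_{n,k,l},\,d\le x\}$ always exists, already has $\graph(g_{n,k,l})\subseteq\graph(\bar g)$, and suffices there. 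If one insists on strict monotonicity as the statement demands, one must explicitly handle the at most countably many flat stretches of $\bar g$ and split each problematic cell further; this is a real argument and the ``no essential obstacle'' claim does not hold up.
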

	
	\begin{proof}
		Let $\varepsilon>0$ and define the subset $\iso^{\varepsilon}(A)$ of $A$ by 
		$$z\in \iso^{\varepsilon}(A)~:\Longleftrightarrow~z=(x,y)\in A~\text{and}~[x,x+\varepsilon)\times[y,y+\varepsilon)\cap A=\{z\}.$$
		Now $z\in A$ is isolated iff there is some $\varepsilon>0$ such that $z\in\iso^{\varepsilon}(A)$, in particular 
		$$\iso(A)=\bigcup\nolimits_{n\in\bN}\iso^{1/n}(A).$$
		Fix $\varepsilon>0$. For some point $z=(x,y)\in\bR^2$ define 
		$$U(z,\epsilon):=\Big[x-\frac{\varepsilon}{4},x+\frac{\varepsilon}{4}\Big]\times\Big[y-\frac{\varepsilon}{4},y+\frac{\varepsilon}{4}\Big],$$
		so $U(z,\epsilon)$ is a closed square that has $z$ as its midpoint and whose edges are of length $\varepsilon/2$. With this definitions one obtains that for every $z\in\bR^2$ and every two different points $(a_1,a_2),(b_1,b_2)\in\iso^{\varepsilon}(A)\cap U(z,\varepsilon)$: 
		$$a_1<b_1~\text{and}~a_2>b_2~~~~~\text{or}~~~~~b_1<a_1~\text{and}~b_2>a_2.$$
		So the set $\iso^{\varepsilon}(A)\cap U(z,\varepsilon)$ can be covered by the graph of some strictly decreasing function $f_{z,\varepsilon}:[x-\frac{\varepsilon}{4},x+\frac{\varepsilon}{4}]\rightarrow[y-\frac{\varepsilon}{4},y+\frac{\varepsilon}{4}]$.
		Now for every $\varepsilon>0$ one can choose countably many $z^{\varepsilon}_1,z^{\varepsilon}_2,\dots\in\bR^2$ such that $\bR^2=\bigcup\nolimits_{i\in\bN}U(z^{\varepsilon}_i,\varepsilon).$ This implies that $\iso^{\epsilon}(A)$ is covered by the union of the graphs of the functions $f_{z^{\varepsilon}_i,\varepsilon}, i\in\bN$. Consequently, the whole of $\iso(A)$ is contained in the union of all graphs of the functions $f_{z^{1/n}_i,1/n}, n,i\in\bN$.
	\end{proof}

	\begin{wrapfigure}{r}{0.15\textwidth}
		\centering
		\includegraphics[width=0.15\textwidth]{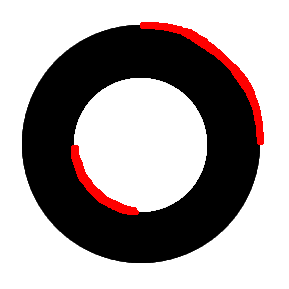}
		\caption{\label{fig:isolatedpoints}}
	\end{wrapfigure}

	We could introduce 'tilted' Sorgenfrey planes by choosing rectangles of the form $(a,b]\times (c,d]$ or $[a,b)\times (c,d]$ or $(a,b]\times[c,d)$ as a basis for the topology. Of course, the analogue statement of Proposition~\ref{thm:isolatedpoints} would be true for these as well, one would only need to interchange 'strictly decreasing' with 'strictly increasing' in the latter two cases. In Fig. \ref{fig:isolatedpoints} one can see a subset $K\subseteq\bR^2$ where the isolated points of $K$ are highlighted in red. Here two strictly decreasing functions are sufficient to cover the isolated points. The union of the isolated points w.r.t. to all four tilted Sorgenfrey planes would cover the whole (euclidean) boundary of this set $K$.

	\begin{proof}[Proof of Proposition \ref{thm:AlmostSureInterxectionInInterior}]
		Fix some $K\in\cK(\recto{})$. We first prove that $\{K\cap R\neq \emptyset, K\cap\inte(R)=\emptyset\}$ is an event. For this we introduce
		$$A:=\Big\{u=(u_1,u_2,u_3,u_4)\in[0,1]^4_<:K\cap [u_1,u_2]\times[u_3,u_4]\neq \emptyset\Big\}$$
		and 
		$$B:=\Big\{u=(u_1,u_2,u_3,u_4)\in[0,1]^4_<:K\subseteq ((u_1,u_2)\times(u_3,u_4))^C\Big\}.$$
		Then 
		$$\{K\cap R\neq \emptyset, K\cap\inte(R)=\emptyset\}=\{(Y_1,Y_2,Y_3,Y_4)\in A\cap B\}.$$
		Now both $A$ and $B$ are closed subsets of $[0,1]^4_<$: Suppose $u^n=(u^n_1,u^n_2,u^n_3,u^n_4)$ is a sequence in $A$ converging towards some $u=(u_1,u_2,u_3,u_4)\in[0,1]^4_<$. By definition of $A$ for every $n$ there is a point $y^n=(y^n_1,y^n_2)\in K$ such that $u^n_1\leq y^n_1\leq u^n_2$ and $u^n_3\leq y^n_2\leq u^n_4$. Since $K$ is compact there exists a converging subsequence $y_{n_k}$ with limit $y=(y_1,y_2)\in K$. Since $u^n\rightarrow u$ it holds that $u_1\leq y_1\leq u_2$ and $u_3\leq y_2\leq u_4$. So $y\in K\cap [u_1,u_2]\times[u_3,u_4]$ and thus $u\in A$. With the same basic considerations one can prove that $B$ is closed. Hence $\{K\cap R\neq \emptyset, K\cap\inte(R)=\emptyset\}$ is an event. 
		
		We will now detect the points in $K$ that can be hit by rectangles on the boundary but not in the interior. Let us introduce the subset $W$ of $K$ as the set of all points $(x,y)\in K$ that are 'isolated on the west', meaning there exists some open rectangle $r$ where the closure of the 'west' side of $r$ contains $(x,y)$ away from its corners and that is disjoint from $K$. Formally,
		$$W:=\Bigg\{(x,y)\in K:~\begin{aligned}&\text{there are}~(b,c,d)~\text{s.t.}~0\leq x<b<c<y<d\leq 1\\ &\text{and}~(x,b)\times(c,d)\cap K=\emptyset
		\end{aligned}\Bigg\}.$$
		In the same way we introduce the set of points of $K$ that are isolated on the east, north or to the south (denoted by $E,N$ and $S$). The points in $K$ that can be hit by rectangles on the interior of the four boundary sides but not in the interior of the rectangle are given by $E\cup W\cup N\cup S$.
		
		Rectangles could hit points in $K$ on the corners but not on the interior. We define the set of points that could be hit by the south-west corner of a rectangle but not in its interior to be $SW\subseteq K$, formally:
		$$SW:=\Bigg\{(x,y)\in K:~\begin{aligned}&\text{there are}~(b,d)~\text{s.t.}~0<x<b<y<d<1\\ &\text{and}~[x,b)\times[y,d)\cap K=\{(x,y)\}
		\end{aligned}\Bigg\}.$$
		In the same way we introduce the sets $SE,NW$ and $NE$. Now we can characterize the event under consideration: for that we introduce the projections $\pi_1:\recto{}\rightarrow [0,1]$ and $\pi_2:\recto{}\rightarrow[0,1]$ by $\pi_1((x,y))=x$ and $\pi_2((x,y))=y$. Now we claim:
		\\
		
		($\star$) For every $(u_1,u_2,u_3,u_4)\in[0,1]^4_<$ such that $[u_1,u_2]\times[u_3,u_4]\cap K\neq \emptyset$ and $(u_1,u_2)\times(u_3,u_4)\cap K=\emptyset$ at least one of the following eight statements is true:
		\begin{align*}
		&u_1\in\pi_1(W),u_2\in\pi_1(E),u_3\in\pi_2(S),u_4\in\pi_2(N),\\
		&(u_1,u_3)\in SW,(u_2,u_3)\in SE,(u_1,u_4)\in NW,(u_2,u_4)\in NE.
		\end{align*}
		
		The sets $\pi_1(W),\pi_1(E),\pi_2(S)$ and $\pi_2(N)$ are all at most countable infinite, which can be seen quite easily.
		
		Now observe that $SW\subseteq \iso(K)$ so by Proposition \ref{thm:isolatedpoints} there are countable many strictly monotone functions $f^{sw}_i:\bR\rightarrow\bR$ such that $SW\subseteq\iso(K)\subseteq \bigcup\nolimits_{i\in\bN}\graph(f^{sw}_i)$. The sets $SE,NW$ and $NE$ are contained in the isolated points of $K$ with respect to the above mentioned tilted Sorgenfrey planes, so in each case there are countable many strictly monotone functions $f^{se}_i,f^{nw}_i,f^{ne}_i,\in\bN$ whose graphs cover the corresponding sets. 
		Since any monotone function $f:\bR\rightarrow\bR$ is measurable, the graph $\graph(f)$ of any monotone function $f$ is a Borel subset of $\bR^2$. Thus by using $(\star)$ and the union bound for probabilities we arrive at the following upper bound for the probability we are interested in:
		\begin{align*}
		\bP\big(K\cap R&\neq \emptyset, K\cap\inte(R)=\emptyset\big)\leq \\
		&\bP\big(Y_1\in \pi_1(W)\big)+\bP\big(Y_2\in \pi_1(E)\big)+\bP\big(Y_3\in \pi_2(S)\big)+\bP\big(Y_4\in \pi_2(N)\big)\\
		&+\sum\nolimits_{i=1}^{\infty}\Big[\bP\big(Y_3=f^{sw}_i(Y_1)\big)+\bP\big(Y_3=f^{se}_i(Y_2)\big)\\
		&~~~~~~~~~~~~~~~~~~~~~~~~~~~~~~~~~~~+\bP\big(Y_4=f^{nw}_i(Y_1)\big)+\bP\big(Y_4=f^{ne}_i(Y_2)\big)\Big].
		\end{align*}
		Now our assumptions on the law of $(Y_1,Y_2,Y_3,Y_4)$ are needed to conclude that each of the probabilities occurring above is zero: Since the conditional law of $Y_i$ given $Y_j$ is almost surely diffuse, the unconditional law of each $Y_i$ is diffuse. Since the projection sets are countable, the first four probabilities are zero. Now we take a look at $\bP(Y_3=f^{sw}_i(Y_1))$:
		$$\bP(Y_3=f^{sw}_i(Y_1))=\int_{[0,1]}\bP(Y_3=f^{sw}_i(y)|Y_1=y)d\bP^{Y_1}(y)=0,$$
		since the conditional law of $Y_3$ given $Y_1=y$ is diffuse for $\bP^{Y_1}$-almost all $y\in[0,1]$. The same reasoning holds for every other remaining term.
	\end{proof}
	
	The lemma used in Section \ref{sec:proof} was a little different from Proposition \ref{thm:AlmostSureInterxectionInInterior}, but can now be easily deduced from it. 
	
	\begin{proof}[Proof of Lemma \ref{lemma:ractanglecut}]
		First assume that $n\geq 4$ and $1\leq j_1<j_2<j_3<j_4\leq n$. In this case the random vector $(Y_1,Y_2,Y_3,Y_4)$ satisfies the assumptions of Proposition \ref{thm:AlmostSureInterxectionInInterior}. Now the only difference is that the compact set under consideration may be random, so we need to make sure that we really deal with an event. Let
		$$A:=\Big\{(K,u_1,u_2,u_3,u_4)\in\cK(\recto{})\times[0,1]^4_<:K\cap [u_1,u_2]\times[u_3,u_4]\neq \emptyset\Big\}$$
		and 
		$$B:=\Big\{(K,u_1,u_2,u_3,u_4)\in\cK(\recto{})\times[0,1]^4_<:K\cap (u_1,u_2)\times(u_3,u_4)\neq \emptyset\Big\}.$$
		As in the proof of Theorem \ref{thm:AlmostSureInterxectionInInterior} one easily obtains that both $A$ and $B^C$ are closed subsets of $\cK(\recto{})\times[0,1]^4_<$, hence we really deal with events and the result follows easily from Theorem \ref{thm:AlmostSureInterxectionInInterior} and the assumed independence of $I_{\infty}$ and $U$, using Fubinis theorem.
		
		Now to the case $j_1=0$ and/or $j_4=n+1$. Assume that $j_1=0$ and $j_4\leq n$. Since we have defined $U_{0:n}=-1$ the random rectangle $R$ can not intersect $I_{\infty}$ at its left side or at one of its two left corners, they do not belong to $\recto{}$. So if an intersection on the boundary of the rectangle takes place, coordinates have to be involved that satisfy the assumptions of the almost sure diffuseness and we refer to the arguments presented in the proof of Proposition \ref{thm:AlmostSureInterxectionInInterior}. The same strategy succeeds in the cases $j_1\geq 1$ and $j_4=n+1$ or $j_1=0$ and $j_4=n+1$. In the latter case one only needs to argue with the RVs $U_{j_2}$ and $U_{j_3}$ which always fulfill the almost sure-diffuseness assumption. 
	\end{proof}
	
	\section{applications}\label{sec:connections}
	
	In this section we will connect our results for exchangeable interval hypergraphs and erased-interval processes to exchangeable hierarchies on $\bN$ in the sense of \cite{fohapi}, to the Martin boundary of R\'emy's tree growth chain in the sense of \cite{egw2} and to composition structures in the sense of \cite{gnedin}. At the end, we will present an outlook for future research.
	
	\subsection{Hierarchies and Schr{\"o}der trees}
	
	\begin{definition}
		A \emph{hierarchy} on $[n]$ is a subset $\hH\subseteq \cP([n])$ such that $\emptyset\in\hH,\{j\}\in\hH$ for every $j\in[n]$, $[n]\in\hH$ and such that for all $e,f\in\hH$ it holds that $e\cap f\in\{e,f,\emptyset\}$. Let $\bH(n)$ be the set of all hierarchies on $[n]$.	
	\end{definition}
	 Hierarchies on $[n]$ are equivalent to leaf-labeled unordered rooted trees in which every internal node has at least two descendants (see \cite{fohapi}). Every such tree can be embedded into the plane: for every internal node one chooses an ordering on the descendants. Now the nodes of that ordered tree get equipped with the canonical lexicographic ordering. This yields a linear order $l$ on $[n]$: $i$ is smaller then $j$ w.r.t. $l$ if and only if the leaf labeled with $i$ is smaller than the leaf labeled with $j$ with respect to the lexicographic ordering. Thereby the hierarchy $\hH$ becomes an interval hypergraph. So $\bH(n)\subseteq \bIH(n)$ for every $n$. Hierarchies are closed under restriction and relabeling, this is immediate from the definition. 
	 \begin{definition}
	 	An exchangeable hierarchy on $\bN$ is an exchangeable interval hypergraph $(H_n)_{n\in\bN}$ such that $H_n\in\bH(n)$ for every $n$. Let $\cMEH$ be the space of all possible laws of exchangeable hierarchies on $\bN$. 
	 \end{definition}
 	
 	In \cite{fohapi} the authors provided two de Finetti-type characterization theorems for exchangeable hierarchies on $\bN$: At first they worked out a description via \emph{sampling from real trees}. Given any exchangeable hierarchy on $\bN$ they constructed a real tree and a probability measure concentrated on the leafs of that tree and then that they proved that the law of the exchangeable hierarchy is the same as the law of the sequence of finite combinatorial subtrees obtained by sampling at iid position  according to the probability measure on that tree. We will not give further details here and refer the reader to \cite[Theorem 5]{fohapi}. From this result they obtained a second representation result, sampling from interval hierarchies on $[0,1)$: these are subsets $\cH\subseteq \cP([0,1))$ such that every $e\in \cH$ is an interval (w.r.t. to the usual linear order on $[0,1)$), such that $\{x\}\in \cH$ for all $x\in [0,1)$, $[0,1)\in\cH$ and such that $e,f\in \cH$ implies $e\cap F\in\{\emptyset,e,f\}$. Denote the space of all interval hierarchies on $[0,1)$ by $\bIHH$. This space was then equipped with a measurability structure: The authors considered the $\sigma$-field generated by restriction to finite sets, that is given some $\cH\in\bIHH$ and a finite subset $A\subset [0,1)$ they considered $\cH_{|A}:=\{A\cap e:e\in \cH\}$. Their representation result reads as follows, where we restate it in a slightly different but equivalent form in which tail $\sigma$-fields are replaced by exchangeable $\sigma$-fields:
 	
 	\begin{theorem*}[\cite{fohapi}, Theorem $4$]
 		Let $(U_i)_{i\in\bN}$ be an $U$-process and let $\cH\in\bIHH$. Let $\law^{\text{ih}}(\cH)$ be the distribution of  
 		$$\Big(\big\{\{i\in[n]:U_i\in e\}:e\in \cH\big\}\cup\{\emptyset\}\Big)_{n\in\bN}.$$
 		Then 
 		\begin{enumerate}
 			\item[a)] $\law^{\text{ih}}(\cH)\in\ex(\cMEH)$ for every $\cH\in\bIHH$ and the map $$\bIHH\rightarrow\ex(\cMEH), H\mapsto \law^{\text{ih}}(\cH)$$ is surjective.
 			\item[b)] For any exchangeable hierarchy $H=(H_n)_{n\in\bN}$ on $\bN$ there is a random $H$-measurable interval hierarchy $\cH$ on $[0,1)$ such that the conditional law of $H$ given the exchangeable $\sigma$-field of $H$ is almost surely equal to $\law^{\text{ih}}(\cH)$. 
 		\end{enumerate} 
 	\end{theorem*}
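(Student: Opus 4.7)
The plan is to reduce this statement to the Corollary at the end of Section~\ref{sec:mainresults} via a correspondence between interval hierarchies on $[0,1)$ and a natural subclass of $\bISi$. Call $K \in \bISi$ \emph{laminar} if $(0,1) \in K$ and for any two off-diagonal points $(x_1,y_1),(x_2,y_2) \in K$ the open intervals $(x_1,y_1)$ and $(x_2,y_2)$ are nested or disjoint. Given $\cH \in \bIHH$, define $K_\cH \in \bISi$ to be the closure in $\recto{}$ of $\diago{}$ together with the endpoint pairs of the non-singleton edges of $\cH$. Since proper overlap is an open condition on endpoint pairs, being laminar is a closed condition on pairs of points in $\recto{}$; hence $K_\cH$ is laminar, and $(0,1) \in K_\cH$ because $[0,1) \in \cH$.

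The key step is to verify $\law^{\text{ih}}(\cH) = \law^{\text{ih}}(K_\cH)$, where the right-hand side is the sampled law defined by \eqref{eq:defehyi}. For each non-singleton $e \in \cH$ with endpoints $x,y$, $\{i \in [n] : U_i \in e\} = \{i \in [n] : x<U_i<y\}$ almost surely, since $\bP(U_i \in \{x,y\}) = 0$. Points added by the closure are limits of endpoint pairs $(x_m,y_m) \to (x,y)$, and at any fixed level $n$ the edge $\{i \in [n] : x<U_i<y\}$ agrees almost surely with $\{i \in [n] : x_m<U_i<y_m\}$ for all sufficiently large $m$, so no new edges arise. Part (a) now follows: $\law^{\text{ih}}(\cH) = \law^{\text{ih}}(K_\cH) \in \ex(\cME)$ by the Corollary, and the laminarity of $K_\cH$ together with $(0,1) \in K_\cH$ force the sampled object to be a hierarchy, yielding $\law^{\text{ih}}(\cH) \in \ex(\cMEH)$. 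For surjectivity, start with $P \in \ex(\cMEH) \subseteq \ex(\cME)$ and apply the Corollary to obtain $K \in \bISi$ with $P = \law^{\text{ih}}(K)$. Then $K$ must be laminar: a properly overlapping pair $(x_1,y_1),(x_2,y_2) \in K$ with $x_1<x_2<y_1<y_2$ would, with positive probability for $n$ large (each of $(x_1,x_2),(x_2,y_1),(y_1,y_2)$ receiving some $U_i$), produce two sampled edges in proper overlap, contradicting the hierarchy property; the requirement $[n] \in H_n$ similarly forces $(0,1) \in K$. From such a $K$ one reads off $\cH \in \bIHH$.

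For (b), apply the surjectivity in Theorem~\ref{thm:mainMAP} to lift $H$ to an EIP $(I,\eta)$ with $H_n = S_n(I_n)$, and apply Theorem~\ref{thm:mainEIP} to obtain $I_\infty = \lim_n n^{-1}I_n \in \bISi$ almost surely. The conditional law of $(I,\eta)$ given $\cF_\infty$ equals $\law(I_\infty)$, which pushes forward under the affine surjection $\cMI \to \cME$ to show that the conditional law of $H$ given $\sigma(I_\infty)$ equals $\law^{\text{ih}}(I_\infty)$. Conditional on $H$ being a hierarchy, the laminarity argument of paragraph~2 applies almost surely to $I_\infty$, and then $I_\infty$ corresponds to a random interval hierarchy $\cH$ on $[0,1)$ with the required property.

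The main obstacle is the measurability claim in (b): the hierarchy $\cH$ must be measurable with respect to $H$ itself, while the construction above passes through $(I,\eta)$ which involves the auxiliary permutation process $S$. The key is to verify that laminar compact sets in $\bISi$ correspond bijectively to their ergodic sampling laws in $\ex(\cMEH)$, and that the terminal $\sigma$-field $\cF_\infty$ of $(I,\eta)$ projects under $\cMI \to \cME$ onto the exchangeable $\sigma$-field of $H$. These together imply that $I_\infty$, and hence $\cH$, is almost surely a measurable function of the exchangeable $\sigma$-field of $H$, which is a sub-$\sigma$-field of $\sigma(H)$.
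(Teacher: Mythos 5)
The paper does not prove this theorem; it cites it from \cite{fohapi} and instead proves a related \emph{improvement} of part (a), namely Corollary~\ref{thm:mainEIHier}, using the auxiliary space $\bSTi$. Your proposal is essentially an attempt to derive the FHP statement from the paper's machinery, so there is no paper proof to match line by line; the right comparison is with Lemma~\ref{lemma:bst} and Corollary~\ref{thm:mainEIHier}.

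Your ``laminar'' condition is word-for-word the paper's definition of $\bSTi$ (Schr\"oder trees on $(0,1)$), so for part (a) you are essentially reproving Lemma~\ref{lemma:bst} and Corollary~\ref{thm:mainEIHier} and adding the translation step $\bIHH \to \bSTi$, $\cH\mapsto K_\cH$. That translation step is the genuinely new content, and it is plausible but underargued. The claim $\law^{\text{ih}}(\cH)=\law^{\text{ih}}(K_\cH)$ is justified in your writeup by ``$\bP(U_i\in\{x,y\})=0$'' for a \emph{fixed} edge, but the set of endpoints of non-singleton edges of $\cH$ can be uncountable, so a union bound over edges is unavailable and the a.s.\ agreement of the two sampled processes is not immediate; you would need an argument along the lines of the paper's Proposition~\ref{thm:AlmostSureInterxectionInInterior}, exploiting that the set of ``bad'' endpoint values (those isolated from one side) is countable. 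This gap is repairable but needs to be filled.

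Part (b) has a more serious problem. Your argument rests on the assertion that ``laminar compact sets in $\bISi$ correspond bijectively to their ergodic sampling laws in $\ex(\cMEH)$''. This is false, and the paper says so explicitly after Corollary~\ref{thm:mainEIHier}: ``our representation is far from unique as well; many different elements in $\bISi$ describe the same ergodic exchangeable hierarchy.'' (Indeed, already the map $\bIHH\to\ex(\cMEH)$ is noted in the paper to be ``far from being injective.'') As a consequence, $I_\infty$ is \emph{not} measurable with respect to $H$: the EIP $(I,\eta)$ lifting $H$ involves the auxiliary randomness of $\eta$ (equivalently of $S$ or $U$), and two different realizations of that auxiliary randomness can produce different $I_\infty$'s representing the same sampled law. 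Also note that the surjectivity in Theorem~\ref{thm:mainMAP} is a statement about \emph{laws}; it does not hand you an EIP on the same probability space with $H_n=S_n(I_n)$ pathwise, so ``lift $H$ to an EIP'' already requires an enlargement-of-probability-space argument before you can even talk about $I_\infty$. The correct way to make part (b) go through from the paper's results is to invoke a Borel-measurable selection for the continuous surjection $\bSTi\to\ex(\cMEH)$ (Federer--Morse or Kuratowski--Ryll-Nardzewski), compose with the translation $\bSTi\to\bIHH$, and apply it to the $\sigma(H)$-measurable random ergodic component $\mu(H)\in\ex(\cMEH)$; the claimed bijectivity is neither true nor needed.
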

 	
 	The map in $a)$ is far from being injective. In fact, the cardinality of $\bIHH$ is strictly larger than the cardinality of $\ex(\cMEH)$. Hence it is not possible to introduce a metric on $\bIHH$ that would turn it into a complete separable metric space. We will offer an improvement of statement a) below that avoids this. For this we obverse that $\cMEH$ is a simplex and by definition, it is a subset of $\cME$. But it is not just included: $\cMEH$ is a \emph{closed face} in the simplex $\cME$, in particular  $\ex(\cMEH)\subseteq\ex(\cME)$. We will use this fact to deduce a representation result concerning exchangeable hierarchies from our representation result concerning $\cME$. We will perform this deduction by passing to erased-type objects at first: For some linear order $l$ on $[n]$ let $\bH(n,l)$ be the set of all hierarchies that are interval hypergraphs w.r.t. $l$. As it is the case with interval hypergraphs, for every $n$ and every $\hH\in\bH(n)$ there is some bijection $\pi$ such that $\pi(\hH)$ is an interval hypergraph w.r.t. the usual linear order $<$. 
	\begin{definition}
		A hierarchy $\hT$ on $[n]$ that is an interval hypergraph w.r.t. to the usual linear order $<$ is called a \emph{Schr{\"o}der tree}. Let $\bST(n):=\bH(n,<)$ be the set of Schr{\"o}der trees on $[n]$. 
	\end{definition}
	Schr{\"o}der trees on $[n]$ are usually introduced as rooted ordered trees with exactly $n$ leafs in which every internal node has at least two descendants (see \cite{ARS}). Our definition is equivalent: Given any rooted ordered tree with exactly $n$ leafs one can enumerate the leafs from $1,\dots,n$ in the lexicographic ordering. Now to every node of the tree we attach the set of numbers of those leaves that are descendants of that node. Every such set of nodes is an interval. We collect all these intervals into a set and include the empty set. The result is an element of $\bST(n)$, that determines the tree-structure in a unique way. By definition every Schr{\"o}der tree on $[n]$ is an interval system on $[n]$ as well, so $\bST(n)\subseteq \bIS(n)$. One can directly see whether an element $\hI\in\bIS(n)$ is a Schr{\"o}der tree: this is the case if and only if for every $[a_1,b_1],[a_2,b_2]\in \hI$ with $a_1<a_2\leq b_1$ it holds that $b_2\leq b_1$, i.e. iff intervals do not overlap. This reflects the property that, in any tree, different subtrees are either disjoint or included. Schr{\"o}der trees are stable under removing elements according to $\phi^{n+1}_n$. If $\hT\in\bST(n)$ and $\vj\in[k:n]$, then $\phi^n_k(\hT,\vj)\in\bIS(k)$ is the ordered subtree induced at the leaves $\vj$. The leaves are then renamed by $[k]$ in a strictly increasing manner; see Fig. \ref{fig:schroedertree} for a visualization of some Schr{\"o}der tree. 

	\begin{figure}
		\centering
		\includegraphics[width=0.95\textwidth]{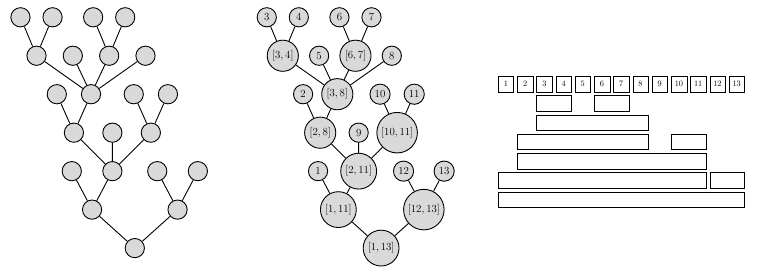}
		\caption{On the left a Schr{\"o}der tree with $13$ leafs. Next the canonical labeling of that tree obtained from the lexicographic order of the leafs. On the right the representation as an interval system.}
		\label{fig:schroedertree}
	\end{figure}
	
	\begin{definition}
		An \emph{erased-Schr{\"o}der tree process} is an erased-interval process $(T_n,\eta_n)_{n\in\bN}$ such that $T_n\in\bST(n)$ for every $n$. Let $\cMS$ be the space of all possible laws of erased-Schr{\"o}der tree processes. 
	\end{definition}
	As in the case above, $\cMS$ is not just a simplex and a subset of $\cMI$, but also a closed face in $\cMI$; in particular, $\ex(\cMS)\subseteq \ex(\cMI)$. Since we have identified $\ex(\cMI)$ with the space $(\bISi,\dha)$, we only have to find that subspace of $\bISi$ that yields Schr{\"o}der trees. Since $\bISi$ was the analogue of $\bIS(n)$ with $n\rightarrow\infty$, the analogue for $\bST(n)$ with $n\rightarrow\infty$ is straightforward to obtain: 
	
	\begin{definition}
		$K\in\bISi$ is called a \emph{Schr{\"o}der tree on $(0,1)$} iff $(0,1)\in K$ and for every $(x_1,y_1),(x_2,y_2)\in K$ with $x_1<x_2<y_1$ it holds that $y_2\leq y_1$. Denote by $\bSTi$ the set of all Schr{\"o}der trees on $(0,1)$.		
	\end{definition} 

	\begin{lemma}\label{lemma:bst}
		Schr{\"o}der trees form a substructure of interval systems that satisfy the following consistency properties:
		\begin{enumerate}
			\item[i)] $\bSTi$ is a closed subset of $\bISi$.
			\item[ii)]  If $K\in\bSTi, k\in\bN, (u_1,\dots,u_k)\in[0,1]^k_<$ then $\phi^{\infty}_k(K,u_1,\dots,u_k)\in\bST(k)$.
			\item[iii)]  For $n\in\bN$ and $\hT\in\bST(n)$ it holds that $n^{-1}\hT\in\bSTi$. 
		\end{enumerate}
	\end{lemma}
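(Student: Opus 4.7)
The plan is to verify each of the three parts by directly translating the Schröder non-overlap property ``$x_1 < x_2 < y_1 \Rightarrow y_2 \leq y_1$'' through the relevant operation, exploiting the fact that this is a closed condition on pairs of coordinates. I would handle (iii) first as it is the most concrete. The point $(0,1)$ lies in $n^{-1}\hT$ because $[1,n] \in \hT$ (every hierarchy contains the full set), and $\diago{} \subseteq n^{-1}\hT$ holds by construction. For non-overlap I would inspect pairs of non-diagonal points, which correspond to pairs of non-singleton intervals $[a,b] \in \hT$ under $[a,b] \mapsto ((a-1)/n, b/n)$; the hierarchy property of $\hT$ (overlapping intervals must be nested) transfers directly. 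Pairs involving a diagonal point are automatic since $x_1 < x_2 < y_1 = x_1$ cannot hold.

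For (i), I would take a sequence $K_n \in \bSTi$ with $\dha(K_n, K) \to 0$ and use Lemma \ref{lemma:convhausdorff} throughout. Since $(0,1) \in K_n$ for every $n$, the lemma immediately gives $(0,1) \in K$. For the non-overlap condition on $K$, given $(x_1, y_1), (x_2, y_2) \in K$ with $x_1 < x_2 < y_1$, I would extract approximating sequences $(x_i^n, y_i^n) \in K_n$, note that the strict inequalities $x_1^n < x_2^n < y_1^n$ persist for all sufficiently large $n$, apply the Schröder property on each $K_n$ to obtain $y_2^n \leq y_1^n$, and pass to the limit to conclude $y_2 \leq y_1$.

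For (ii), I would use the characterization in equation (\ref{eq:samplingREP}). The inclusion $[1,k] \in \phi^\infty_k(K, u_1, \ldots, u_k)$ is witnessed by the point $(0,1) \in K$, which lies in the rectangle $(u_0, u_1) \times (u_k, u_{k+1}) = (-1, u_1) \times (u_k, 2)$ thanks to the boundary conventions $u_0 = -1$ and $u_{k+1} = 2$. For non-overlap, given $[a_1, b_1], [a_2, b_2] \in \phi^\infty_k(K, u_1, \ldots, u_k)$ with $a_1 < a_2 \leq b_1$, I would pick witnesses $(x_i, y_i) \in K$ from the defining rectangles, use the interleaving $u_{a_1} \leq u_{a_2-1}$ and $u_{a_2} \leq u_{b_1}$ to obtain $x_1 < u_{a_1} \leq u_{a_2-1} < x_2 < u_{a_2} \leq u_{b_1} < y_1$, apply the Schröder property on $K$ to deduce $y_2 \leq y_1$, and finally rule out $b_2 > b_1$ by noting that this would force $y_1 < u_{b_1+1} \leq u_{b_2} < y_2$, contradicting $y_2 \leq y_1$. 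No step poses a genuine obstacle; the crux is simply recognizing that the Schröder condition is a closed algebraic relation on pairs that commutes with Hausdorff limits, the sampling map $\phi^\infty_k$, and the scaling $\hI \mapsto n^{-1}\hI$.
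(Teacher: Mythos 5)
Your proposal is correct and follows essentially the same three-step verification as the paper: checking $(0,1)$ (or $[k]$) is present, then unwinding the non-overlap condition through the respective operation, using Lemma~\ref{lemma:convhausdorff} for part (i) and equation~(\ref{eq:samplingREP}) for part (ii) exactly as the paper does. (One small slip: non-diagonal points of $n^{-1}\hT$ correspond to \emph{all} intervals $[a,b]\in\hT$ under $[a,b]\mapsto((a-1)/n,b/n)$, including singletons $[a,a]$; this does not affect the argument since such pairs cannot violate the non-overlap condition.)
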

	\begin{proof}
		i):~Let $(K_n)_{n\in\bN}$ be a convergent sequence in $\bSTi$ with limit $K\in\bISi$. We need to show that $K$ is a Schr{\"o}der tree on $(0,1)$. It is obvious that $(0,1)\in K$. Let $z_1=(x_1,y_1),z_2=(x_2,y_2)\in K$ with $x_1<x_2<y_1$. Since $\dha(K_n,K)\rightarrow 0$ there are  $z^n_1=(x^n_1,y^n_1),z^n_2=(x^n_2,y^n_2)\in K_n$ such that $z^n_1\rightarrow z_1$ and $z^n_2\rightarrow z_2$ as $n\rightarrow\infty$. Since $x_1<x_2<y_1$ it holds that $x^n_1<x^n_2<y^n_1$ for all but finitely many $n$. Since all $K_n$ are Schr{\"o}der trees, it follows that $y^n_2\leq y^n_1$ for all but finitely many $n$. This implies $y_2\leq y_1$. Hence $K$ is a Schr{\"o}der tree. 
		
		ii):~Let $\hT:=\phi^{\infty}_k(K,u_1,\dots,u_k)$ and $[a_1,b_1],[a_2,b_2]\in \hT$ with $a_1<a_2\leq b_1$. Hence there are some $(x_1,y_1),(x_2,y_2)\in K$ such that 
		$$u_{a_1-1}<x_1<u_{a_1}<u_{b_1}<y_1<u_{b_1+1}~~\text{and}~~u_{a_2-1}<x_2<u_{a_2}<u_{b_2}<y_2<u_{b_2+1}.$$
		Since $a_1\leq a_2-1$ it holds that $u_{a_1}\leq u_{a_2-1}$ and so $x_1<x_2$. Further, since $a_2\leq b_1$ it holds that $u_{a_2}\leq u_{b_1}$ and so $x_2<y_1$. So $x_1<x_2<y_1$ and because $K$ is assumed to be a Schr{\"o}der tree it holds that $y_2\leq y_1$. Hence $u_{b_2}<u_{b_1+1}$ and so $b_2\leq b_1$. Furthermore it holds that $[n]\in\hT$, since $(0,1)\in K$. This shows that $\hT$ is a Schr{\"o}der tree. 
		
		iii):~Let $(x_1,y_1),(x_2,y_2)\in n^{-1}\hT$ with $x_1<x_2<y_1$. By definition of $n^{-1}\hT$ there are some $[a_1,b_1],[a_2,b_2]\in\hT$ with $(x_i,y_i)=((a_i-1)/n,b_i/n)$. One obtains $a_1<a_2\leq b_1$. Since $\hT$ is a Schr{\"o}der $b_2\leq b_1$ and hence $y_2\leq y_1$. Furthermore, because $[n]\in\hT$ also $(0,1)\in n^{-1}\hT$. Hence $n^{-1}\hT\in \bSTi$. 
	\end{proof}

	Lemma \ref{lemma:bst} and Theorem \ref{thm:mainEIP} directly yield a concrete description of erased-Schr{\"o}der tree processes and in that way a description of exchangeable hierarchies on $\bN$. The latter serves as an improvement of part a) of the theorem given in \cite{fohapi}. 
	
	\begin{corollary}\label{thm:mainEST}
		For every $K\in\bSTi$ one has $\law(K)\in\ex(\cMS)$ and the map $\bSTi\rightarrow\ex(\cMS),K\mapsto \law(K)$ is a homeomorphism. One has the following concrete representation: Let $(T,\eta)=(T_n,\eta_n)_{n\in\bN}$ be an erased-Schr{\"o}der tree process. Then $n^{-1}T_n$ converges almost surely as $n\rightarrow\infty$ towards some $\bSTi$-valued random variable $T_{\infty}$. Let $U=(U_i)_{i\in\bN}$ be the $U$-process corresponding to $\eta$. Then $T_{\infty}$ and $U$ are independent and one has the equality of processes
		\begin{equation*}
		(T_n,\eta_n)_{n\in\bN}~=~\Big(\phi^{\infty}_n(T_{\infty},U_{1:n},\dots,U_{n:n}),\eta_n\Big)_{n\in\bN}~~\text{almost surely.}
		\end{equation*}
		In particular, for every erased-Sch{\"o}der tree process $(T,\eta)$ the conditional law of $(T,\eta)$ given the terminal $\sigma$-field $\cF_{\infty}$ is $\law(T_{\infty})$ almost surely and $T_{\infty}$ generates $\cF_{\infty}$ almost surely.
	\end{corollary}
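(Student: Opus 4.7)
The plan is to derive this corollary as a direct specialization of Theorem~\ref{thm:mainEIP}, transferring all three of its assertions (the homeomorphism, the almost sure convergence, and the almost sure representation) from $\bISi$ to its closed subspace $\bSTi$ by means of the three consistency properties packaged in Lemma~\ref{lemma:bst}. The underlying structural fact I need is that $\cMS$ is a closed face of the simplex $\cMI$: the event $\{T_n \in \bST(n)\ \text{for every}\ n\}$ is closed in the path space and invariant under the Markov dynamics $\theta$ from (\ref{eq:theta}), because $\bST(n+1)$ is stable under $\phi^{n+1}_n$. A closed face inherits its extreme points from the ambient simplex, so $\ex(\cMS) = \cMS \cap \ex(\cMI)$.

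For the forward direction, fix $K \in \bSTi$. Theorem~\ref{thm:mainEIP} already gives $\law(K) \in \ex(\cMI)$, while Lemma~\ref{lemma:bst}(ii) ensures that $\phi^{\infty}_n(K, U_{1:n}, \ldots, U_{n:n}) \in \bST(n)$ almost surely for every $n$, so $\law(K)$ is concentrated on sequences of Schr\"oder trees and lies in $\cMS$. Combining these two facts yields $\law(K) \in \ex(\cMS)$.

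For the almost sure representation, let $(T, \eta)$ be an arbitrary erased-Schr\"oder tree process and apply Theorem~\ref{thm:mainEIP} to it, viewed as an erased-interval process. This produces a $\bISi$-valued random variable $I_\infty = \lim_n n^{-1} T_n$ almost surely, independent of the $U$-process $U$ corresponding to $\eta$, together with the pathwise identity $(T_n, \eta_n) = (\phi^{\infty}_n(I_\infty, U_{1:n}, \ldots, U_{n:n}), \eta_n)$ almost surely. By Lemma~\ref{lemma:bst}(iii) each approximant $n^{-1} T_n$ already lies in $\bSTi$, and by Lemma~\ref{lemma:bst}(i) the subspace $\bSTi$ is closed in $\bISi$; hence the limit $T_\infty := I_\infty$ takes values in $\bSTi$ almost surely. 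The statements concerning the conditional law of $(T,\eta)$ given $\cF_\infty$ and the generation of $\cF_\infty$ by $T_\infty$ transfer verbatim from Theorem~\ref{thm:mainEIP}.

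Finally, the restriction of $K \mapsto \law(K)$ to $\bSTi$ is continuous and injective, both inherited from Theorem~\ref{thm:mainEIP}; surjectivity onto $\ex(\cMS)$ follows because every element of $\ex(\cMS) \subseteq \ex(\cMI)$ equals $\law(K)$ for some $K \in \bISi$, and the previous paragraph forces that $K$ to lie in $\bSTi$. Since $\bSTi$ is compact (closed inside the compact space $\bISi$) and $\ex(\cMS)$ is Hausdorff, a continuous bijection between them is automatically a homeomorphism. The only mildly delicate step in the whole argument is the verification of the face property $\ex(\cMS) = \cMS \cap \ex(\cMI)$; everything else is essentially bookkeeping once Theorem~\ref{thm:mainEIP} and Lemma~\ref{lemma:bst} are at our disposal.
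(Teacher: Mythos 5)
Your proposal is correct and follows exactly the route the paper takes: the paper notes that $\cMS$ is a closed face of $\cMI$ (so $\ex(\cMS)\subseteq\ex(\cMI)$) and then asserts that the corollary follows directly from Lemma~\ref{lemma:bst} and Theorem~\ref{thm:mainEIP}, which is precisely the specialization argument you spell out. Your version simply makes the bookkeeping explicit — the face property, the use of Lemma~\ref{lemma:bst}(i)--(iii) to keep the limit, the sampled trees, and the scaled approximants inside $\bSTi$, and the compactness argument for the homeomorphism — so it matches the paper's proof in substance.
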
 
	
	See Fig. \ref{fig:phiinftyB} for an illustration of the overall procedure.

	\begin{corollary}\label{thm:mainEIHier}
		Let $(U_i)_{i\in\bN}$ be an $U$-process and let $K\in\bSTi$. Let $\law^{\text{ih}}(K)$ be the distribution of  
		$$\Big(\big\{\{j\in[n]:x<U_j<y\}:(x,y)\in K\big\}\cup\big\{\{j\}:j\in[n]\big\}\cup\{\emptyset\}\Big)_{n\in\bN}$$
		Then the following holds
		\begin{enumerate}
			\item[a')] $\law^{\text{ih}}(K)\in\ex(\cMEH)$ for every $K\in\bSTi$ and the map $$\bSTi\rightarrow\ex(\cMEH), K\mapsto \law^{\text{ih}}(K)$$ is surjective and continuous.
		\end{enumerate} 
	\end{corollary}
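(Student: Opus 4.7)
The plan is to combine the preceding corollary (characterizing $\ex(\cME)$ via $\bISi$), Corollary \ref{thm:mainEST} (giving the analogous description $\bSTi \cong \ex(\cMS)$), and the affine surjection $\cMI \to \cME$ from Theorem \ref{thm:mainMAP}. The key bridge is that relabeling a Schr\"oder tree by a permutation produces a hierarchy, and conversely every hierarchy on $[n]$ is such a relabeling.

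First I would verify that $\law^{\text{ih}}(K) \in \ex(\cMEH)$ for every $K \in \bSTi$. By Lemma \ref{lemma:bst}(ii) the sampled object $T_n := \phi^{\infty}_n(K, U_{1:n}, \dots, U_{n:n})$ is a Schr\"oder tree, and comparing with equation \eqref{eq:defehy} the $n$-th coordinate of the process defining $\law^{\text{ih}}(K)$ equals $S_n(T_n)$ almost surely, where $S$ is the permutation process attached to $U$; hence it is a hierarchy. Therefore $\law^{\text{ih}}(K) \in \cMEH$. The preceding corollary gives $\law^{\text{ih}}(K) \in \ex(\cME)$, and since $\cMEH$ is a closed face of $\cME$ (as already noted in the excerpt), one has $\ex(\cMEH) = \cMEH \cap \ex(\cME)$, yielding the claim. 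Continuity is automatic: the map is the restriction of the continuous $\bISi \to \ex(\cME)$ to the closed subset $\bSTi \subseteq \bISi$ (closedness by Lemma \ref{lemma:bst}(i)).

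For surjectivity I would factor the map as $\bSTi \xrightarrow{\cong} \ex(\cMS) \xrightarrow{f} \cMEH$, where the first arrow is the homeomorphism from Corollary \ref{thm:mainEST} and $f$ is the restriction to $\cMS$ of the affine surjection in Theorem \ref{thm:mainMAP}. That $f$ lands in $\cMEH$ is the hierarchy-under-relabeling observation above. That $f(\cMS) = \cMEH$ follows by rerunning the surjectivity construction of Theorem \ref{thm:mainMAP} with the reference object $\hI_{\hH}$ chosen inside $\bST(n)$ for every $\hH \in \bH(n)$; such a choice exists because every rooted unordered tree admits a planar embedding, and the limiting law stays in $\cMS$ since $\bST(k)$ is closed in $\bIS(k)$ for every $k$ and $\cMS$ is a closed face of $\cMI$. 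Given $P \in \ex(\cMEH)$, pick any preimage $Q \in \cMS$ and decompose $Q = \int Q' \, d\mu(Q')$ with $\mu$ supported on $\ex(\cMS)$. Affinity yields $P = \int f(Q') \, d\mu(Q')$ with $f(Q') \in \cMEH$, and extremality of $P$ in $\cMEH$ forces $f(Q') = P$ for $\mu$-almost every $Q'$. Applying the homeomorphism of Corollary \ref{thm:mainEST} to such a $Q'$ produces $K \in \bSTi$ with $\law^{\text{ih}}(K) = f(\law(K)) = P$.

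The main obstacle is the non-injectivity of $\bISi \to \ex(\cME)$, which prevents a direct argument that any $K \in \bISi$ with $\law^{\text{ih}}(K) \in \cMEH$ automatically lies in $\bSTi$. Routing the proof through $\cMS$ together with the ergodic decomposition sidesteps this and produces an ergodic preimage already inside $\cMS$, which then corresponds via Corollary \ref{thm:mainEST} to a genuine Schr\"oder tree on $(0,1)$.
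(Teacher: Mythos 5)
Your argument is correct and follows the route the paper itself sketches: obtain the homeomorphism $\bSTi\cong\ex(\cMS)$ as a special case of Theorem \ref{thm:mainEIP} via Lemma \ref{lemma:bst} (this is Corollary \ref{thm:mainEST}), and then push forward through the affine surjection of Theorem \ref{thm:mainMAP} restricted to the closed face $\cMS\subseteq\cMI$, landing in the closed face $\cMEH\subseteq\cME$. Your re-run of the Theorem \ref{thm:mainMAP} surjectivity construction with $\hI_{\hH}\in\bST(n)$ and the ergodic-decomposition step to extract an extreme preimage are exactly the needed fill-ins, and they are sound.

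One small remark: your closing concern is somewhat overstated. If $\law^{\text{ih}}(K)\in\cMEH$ then necessarily $K\in\bSTi$, since two overlapping points of $K$ (say $(x_1,y_1),(x_2,y_2)$ with $x_1<x_2<y_1<y_2$) would be sampled simultaneously with positive probability, producing overlapping edges and destroying the hierarchy property, and $[n]\in H_n$ for all $n$ together with compactness of $K$ forces $(0,1)\in K$. So a direct argument using only the preceding corollary would also be feasible; your detour through $\cMS$ is cleaner and matches the author's stated intent, but the non-injectivity of $\bISi\to\ex(\cME)$ is not what blocks the direct route.
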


	\begin{figure}[h]
		\centering
		\includegraphics[width=0.95\textwidth]{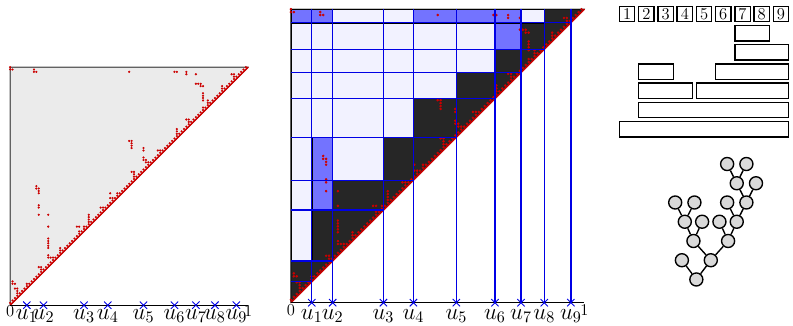}
		\caption{On the left a realization of $\color{red}n^{-1}I_n\color{black}$ for $I_n\sim \unif(\bBT(n))$ with $n=100$. On the right the binary tree $\phi^n_9(\color{red}n^{-1}I_n\color{black},\color{blue}u_1\color{black},\color{blue}\dots\color{black},\color{blue}u_9\color{black})$ for some $(\color{blue}u_1\color{black},\color{blue}\dots\color{black},\color{blue}u_9\color{black})\in[0,1]^9_<$, once pictured as a set of intervals and once in in the usual way as a tree.}
		\label{fig:phiinftyB}
	\end{figure}

	If one identifies every point $(x,y)\in K\in\bSTi$ with the open interval $(x,y)\subseteq (0,1)$ and the diagonal points with singletons, then one can regard every Schr{\"o}der tree on $(0,1)$ as a interval hierarchy on $(0,1)$ and hence $\bSTi\subseteq \bIHH$. The space $\bSTi$ is much 'smaller' and more structured then the large space $\bIHH$, since $(\bSTi,\dha)$ is a compact metric space. Although we reduced the cardinality of the space used to describe all ergodic exchangeable laws, our representation is far from unique as well; many different elements in $\bISi$ describe the same ergodic exchangeable hierarchy. Note that we obtained this result without using real trees. 
	
	\subsection{Binary trees}
	
	A binary tree on $[n]$ is a rooted ordered trees with exactly $n$ leafs in which every internal node has exactly two descendants (as a consequence, there are $n-1$ internal nodes). Thus we can introduce binary trees as subsets of Schr{\"o}der trees: A tree is binary iff for all choices of three disjoint subtrees there exists a fourth subtree that includes exactly two of the former and is disjoint to the third. This can be checked for leaves, which are the subtrees of size one. We present the following equivalent definition for binary trees, first in the finite case and then in the limit:
	\begin{definition}
		A Schr{\"o}der tree $\hT\in\bST(n)$ is called a \emph{binary tree}, if for every $1\leq j_1<j_2<j_3\leq n$ there is some $[a,b]\in\hT$ with either $a\leq j_1<j_2\leq b<j_3$ or $j_1<a\leq j_2<j_3\leq b$. Let $\bBT(n)\subseteq\bST(n)$ be the set of all binary trees on $[n]$. 
	\end{definition}

	\begin{definition}
		Let $U_1,U_2,U_3$ be iid uniform RVs. An element $K\in\bSTi$ is called \emph{binary tree on $(0,1)$} if $\phi^{\infty}_3(K,U_{1:3},U_{2:3},U_{3:3})$ is almost surely a binary tree on $[3]$.
	\end{definition}

	\begin{lemma}\label{lemma:bbtree}
		Binary trees form a substructure of Schr{\"o}der trees that satisfy the following consistency properties:
		\begin{enumerate}
			\item[i)] $\bBTi$ is a closed subset of $\bSTi$.
			\item[ii)] If $K\in\bBTi, (U_i)_{i\in\bN}$ is an $U$-process and $k\in\bN$ then $\phi^{\infty}_k(K,U_{1:k},\dots,U_{k:k})\in\bBT(k)$ almost surely. 
			\item[iii)] If $(\hT_n)_{n\in\bN}$ is a sequence with $\hT_n\in\bBT(n)$ and such that $n^{-1}\hT_n$ converges towards some $K$, then $K\in\bBTi$. 
		\end{enumerate}
	\end{lemma}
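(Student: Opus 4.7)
The approach is to reduce all three parts via a single combinatorial characterization: for $\hT \in \bST(k)$, one has $\hT \in \bBT(k)$ if and only if for every $\vec{j} = (j_1, j_2, j_3) \in [3:k]$ the size-three restriction $\phi^k_3(\hT, \vec{j})$ lies in $\bBT(3)$. I would establish this first, by noting that the binary condition at three leaves $j_1 < j_2 < j_3$ demands an interval of $\hT$ grouping $\{j_1, j_2\}$ away from $j_3$ (which contributes $[1,2]$ to the restriction) or $\{j_2, j_3\}$ away from $j_1$ (which contributes $[2,3]$), and that $\bBT(3) = \{\hT' \in \bST(3) : [1,2] \in \hT'~\text{or}~[2,3] \in \hT'\}$. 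This reduces everything to the case $k = 3$, which is exactly the case controlled by the defining property of $\bBTi$.

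For part (i), fix $K_n \to K$ in $\bSTi$ with $K_n \in \bBTi$ and an independent $U$-process; Lemma \ref{lemma:continuity} gives $\phi^{\infty}_3(K_n, U_{1:3}, U_{2:3}, U_{3:3}) \to \phi^{\infty}_3(K, U_{1:3}, U_{2:3}, U_{3:3})$ almost surely, and convergence in the discrete space $\bIS(3)$ is eventual constancy; each pre-limit term lies in $\bBT(3)$ almost surely by the definition of $\bBTi$, so the limit does too and $K \in \bBTi$. For part (iii), take $\hT_n \in \bBT(n)$ with $n^{-1}\hT_n \to K$; Lemma \ref{lemma:bst} delivers $K \in \bSTi$. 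Setting $J^n_i := \lfloor n U_{i:3} \rfloor + 1$ (well defined almost surely since $nU_{i:3} \notin \bN$ a.s.), one has $\vec{J}^n \in [3:n]$ for all large $n$ almost surely, and a direct check using the rectangle description (\ref{eq:samplingREP}) and the explicit form of $n^{-1}\hT_n$ shows $\phi^{\infty}_3(n^{-1}\hT_n, U_{1:3}, U_{2:3}, U_{3:3}) = \phi^n_3(\hT_n, \vec{J}^n)$ almost surely. The characterization places the right-hand side in $\bBT(3)$, and Lemma \ref{lemma:continuity} passes the membership to the limit.

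Part (ii) is the real content: for $K \in \bBTi$ show $\phi^{\infty}_k(K, U_{1:k}, \dots, U_{k:k}) \in \bBT(k)$ almost surely. The characterization plus a union bound over the finite set $[3:k]$ reduces this to proving, for each fixed $\vec{j} = (j_1, j_2, j_3)$, that $\phi^k_3(\phi^{\infty}_k(K, U_{1:k}, \dots, U_{k:k}), \vec{j}) \in \bBT(3)$ almost surely. My plan is to identify this restriction almost surely with the simpler object $\phi^{\infty}_3(K, U_{j_1:k}, U_{j_2:k}, U_{j_3:k})$. Unrolling (\ref{eq:samplingREP}) and (\ref{eq:samplingREPfin}), the latter asks whether $K$ meets the ``big'' open rectangle $(U_{j_{a-1}:k}, U_{j_a:k}) \times (U_{j_b:k}, U_{j_{b+1}:k})$, while the former asks whether $K$ meets one of the finitely many ``small'' open sub-rectangles $(U_{A-1:k}, U_{A:k}) \times (U_{B:k}, U_{B+1:k})$ indexed by $j_{a-1} < A \leq j_a$ and $j_b \leq B < j_{b+1}$; the big closed rectangle is exactly the union of the small closed rectangles. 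Applying Lemma \ref{lemma:ractanglecut} to the big rectangle and to each of the finitely many small rectangles makes the closed and open intersection conditions equivalent almost surely and yields the identification. To finish, $(U_{j_1:k}, U_{j_2:k}, U_{j_3:k})$ has a continuous (Dirichlet-type) density on $[0,1]^3_<$, so its law is absolutely continuous with respect to the law of the order statistics of three iid uniforms, and the defining property of $\bBTi$ transfers verbatim.

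The hard part is the identification step in (ii): the two interval systems $\phi^k_3(\phi^{\infty}_k(K, U), \vec{j})$ and $\phi^{\infty}_3(K, U_{j_1:k}, U_{j_2:k}, U_{j_3:k})$ are genuinely not equal pointwise, since they can disagree precisely when $K$ meets the big rectangle only along the internal cut lines $\{x = U_{A:k}\}$ or $\{y = U_{B:k}\}$, and only the randomness of $U$ together with its independence from $K$, routed through Lemma \ref{lemma:ractanglecut}, rules this exceptional configuration out almost surely.
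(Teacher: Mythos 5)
Your overall strategy coincides with the paper's: reduce everything via the observation that $\hT \in \bST(k)$ is binary iff all $\phi^k_3$-restrictions lie in $\bBT(3)$, together with $\bBT(3) = \{\hT' \in \bST(3) : [1,2]\in\hT' \text{ or } [2,3]\in\hT'\}$. That characterization is correct. Parts (i) and (iii) are essentially the paper's argument. For (iii) the paper proceeds by checking directly that $\phi^{\infty}_3(n^{-1}\hT_n,u_1,u_2,u_3)\in\bBT(3)$ whenever the $u_i$ avoid the countable set of cut points $\{(a-1)/n,\,b/n : [a,b]\in\hT_n\}$ and $|u_1-u_3|>2/n$, then passes to the limit via Lemma~\ref{lemma:continuity}; your version with $J^n_i := \lfloor nU_{i:3}\rfloor+1$ and the identity $\phi^{\infty}_3(n^{-1}\hT_n,U_{1:3},U_{2:3},U_{3:3}) = \phi^n_3(\hT_n,\vec J^n)$ a.s.\ packages the same content in a different but equally correct form.

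For part (ii) your argument is valid but noticeably heavier than it needs to be. You identify $\phi^k_3(\phi^{\infty}_k(K,U_{1:k},\dots,U_{k:k}),\vec j)$ a.s.\ with $\phi^{\infty}_3(K,U_{j_1:k},U_{j_2:k},U_{j_3:k})$ by comparing open versus closed rectangles and invoking Lemma~\ref{lemma:ractanglecut} several times, and then use absolute continuity of the law of $(U_{j_1:k},U_{j_2:k},U_{j_3:k})$ with respect to the law of three-point uniform order statistics. That identification is the delicate point you correctly flag, and your treatment of it is sound (the big closed rectangle is indeed the union of the small closed rectangles, Lemma~\ref{lemma:ractanglecut} applies to each, and the Dirichlet density is strictly positive on $[0,1]^3_<$). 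However, there is a shorter route that sidesteps Lemma~\ref{lemma:ractanglecut} and absolute continuity entirely: write $U_i = V_{\tau(i)}$ with order statistics $(V_1,\dots,V_k)$ independent of the rank permutation $\tau$. Then $I_k = \phi^{\infty}_k(K,V_1,\dots,V_k)$ depends only on $(V_1,\dots,V_k)$, while the random index vector $\vec j^{\pi}_3$ appearing in Lemma~\ref{lemma:technical}~ii) (with $\pi=\tau^{-1}$) depends only on $\tau$; hence $\vec j^{\pi}_3$ is uniform on $[3:k]$ and \emph{independent} of $I_k$. Lemma~\ref{lemma:technical}~ii) gives $\phi^k_3(I_k,\vec j^{\pi}_3) = \phi^{\infty}_3(K,U_{1:3},U_{2:3},U_{3:3}) \in \bBT(3)$ a.s.\ by definition of $\bBTi$, and decomposing over the values of $\vec j^{\pi}_3$ shows $\bP(\phi^k_3(I_k,\vec j)\in\bBT(3))=1$ for every fixed $\vec j\in[3:k]$, which by the characterization gives $I_k\in\bBT(k)$ a.s. This is presumably the argument the paper has in mind when it states part (ii) in one line. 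Your proof buys nothing extra here, but it is correct; the independence route is the cleaner one.
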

	\begin{proof}
		i):~Let $K_n$ be a sequence in $\bBTi$ converging towards some $K\in\bSTi$. By Lemma \ref{lemma:continuity} the sequence $T_n=\phi^{\infty}_3(K_n,U_{1:3},U_{2:3},U_{3:3})$ converges almost surely towards $T=\phi^{\infty}_3(K,U_{1:3},U_{2:3},U_{3:3})$. Since all $T_n$ are almost surely binary by definition, so is $T$ and hence $K\in\bBTi$.
		
		ii):~This follows from the fact that a finite tree $\hT\in\bST(k)$ is binary iff $\phi^k_3(\hT,\vj)$ is binary for every $\vj\in[3:k]$.
		
		iii):~Given any $\hT\in\bBT(n)$ define the set of points $A=\{(a-1)/n:[a,b]\in \hT\}\cup\{b/n:[a,b]\in\hT\}$. Now if $(u_1,u_2,u_3)\in[0,1]^3_<$ is such that $u_i\notin A$ for $i=1,2,3$ and $|u_1-u_3|>2/n$, then $\phi^{\infty}_3(\hT,u_1,u_2,u_3)\in\bBT(3)$. Now let $\hT_n\in\bBT(n)$ and $U_1,U_2,U_3$ be iid uniform on $[0,1]$. Let $A_n$ be the $A$-set corresponding to $\hT_n$ and $B=\cup_nA_n$. Since $B$ is countable, $U_{i:3}\notin B$ almost surely for all $i=1,2,3$. Almost surely there is an $N$ such that $|U_{1:3}-U_{3:3}|>2/n$ for all $n\geq N$. Consequently, $\phi^{\infty}_3(n^{-1}\hT_n,U_{1:3},U_{2:3},U_{3:3})$ is almost surely binary for all but finitely many $n$. Hence with Lemma \ref{lemma:continuity} the limit $\phi^{\infty}_3(K,U_{1:3},U_{2:3},U_{3:3})$ is almost surely binary and so $K\in\bBTi$. 
	\end{proof}	
	
	Again one obtains as a special case of Theorem \ref{thm:mainEIP} an almost sure characterization of erased-binary tree processes:
	
	\begin{corollary}\label{thm:mainEBT}
		For every $K\in\bBTi$ one has $\law(K)\in\ex(\cMB)$ and the map $\bBTi\rightarrow\ex(\cMB),K\mapsto \law(K)$ is a homeomorphism. One has the following concrete representation: Let $(T,\eta)=(T_n,\eta_n)_{n\in\bN}$ be an erased-binary tree process. Then $n^{-1}T_n$ converges almost surely as $n\rightarrow\infty$ towards some $\bBTi$-valued random variable $T_{\infty}$. Let $U=(U_i)_{i\in\bN}$ be the $U$-process corresponding to $\eta$. Then $T_{\infty}$ and $U$ are independent and one has the equality of processes
		\begin{equation*}
		(T_n,\eta_n)_{n\in\bN}~=~\Big(\phi^{\infty}_n(T_{\infty},U_{1:n},\dots,U_{n:n}),\eta_n\Big)_{n\in\bN}~~\text{almost surely.}
		\end{equation*}
		In particular, for every erased-binary tree process $(T,\eta)$ the conditional law of $(T,\eta)$ given the terminal $\sigma$-field $\cF_{\infty}$ is $\law(T_{\infty})$ almost surely and $T_{\infty}$ generates $\cF_{\infty}$ almost surely.
	\end{corollary}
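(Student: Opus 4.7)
My plan is to deduce this corollary from Theorem \ref{thm:mainEIP} in the same way that Corollary \ref{thm:mainEST} was deduced for Schr\"oder trees, with Lemma \ref{lemma:bbtree} playing the role that Lemma \ref{lemma:bst} played there. The strategy is to identify $\bBTi$ as the ``sub-parameter space'' of $\bISi$ that parametrizes exactly the ergodic erased-binary tree processes, and then transport the homeomorphism of Theorem \ref{thm:mainEIP} along this inclusion.

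First I will show that $K\mapsto\law(K)$ maps $\bBTi$ into $\ex(\cMB)$. For $K\in\bBTi$, Theorem \ref{thm:mainEIP} already gives $\law(K)\in\ex(\cMI)$, and Lemma \ref{lemma:bbtree}(ii) shows that under $\law(K)$ each coordinate $T_n=\phi^{\infty}_n(K,U_{1:n},\dots,U_{n:n})$ almost surely lies in $\bBT(n)$, so $\law(K)\in\cMB$. Since any extreme point of $\cMI$ that happens to lie in the convex subset $\cMB\subseteq\cMI$ is automatically extreme in $\cMB$, we obtain $\law(K)\in\ex(\cMB)$.

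Next I will establish the almost sure representation and surjectivity simultaneously. Any erased-binary tree process $(T,\eta)$ is in particular an erased-interval process, so Theorem \ref{thm:mainEIP} applies verbatim: $n^{-1}T_n$ converges almost surely in $(\bISi,\dha)$ to a random variable $T_{\infty}$, one has $T_k=\phi^{\infty}_k(T_{\infty},U_{1:k},\dots,U_{k:k})$ a.s.\ for every $k$, and $T_{\infty}$ is independent of the associated $U$-process. The additional input specific to binary trees is Lemma \ref{lemma:bbtree}(iii): since $T_n\in\bBT(n)$ for every $n$ by assumption, the limit $T_{\infty}$ almost surely lies in $\bBTi$. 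This yields the stated representation, and the conditional-law and $\sigma$-field generation statements are inherited from Theorem \ref{thm:mainEIP}. For surjectivity of $K\mapsto\law(K)$ onto $\ex(\cMB)$: given $P\in\ex(\cMB)\subseteq\ex(\cMI)$, Theorem \ref{thm:mainEIP} gives a unique $K\in\bISi$ with $P=\law(K)$; since $P$ is ergodic, the limit $T_{\infty}$ is almost surely equal to the deterministic element $K$, and Lemma \ref{lemma:bbtree}(iii) forces $K\in\bBTi$.

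For the homeomorphism I will use that $\bBTi$ is a closed subset of the compact space $\bISi$ by Lemma \ref{lemma:bbtree}(i), and that $\bISi\to\ex(\cMI)$ is a homeomorphism by Theorem \ref{thm:mainEIP}; the restriction of a homeomorphism between compact Hausdorff spaces to a closed subspace is a homeomorphism onto its image, and by the previous paragraph that image is exactly $\ex(\cMB)$. I expect no serious obstacle in writing this out: the only step that is not purely formal is the implication ``$T_n\in\bBT(n)$ for all $n$ $\Rightarrow$ $T_{\infty}\in\bBTi$ a.s.'', but this has already been settled in Lemma \ref{lemma:bbtree}(iii), whose proof exploits the genericity of iid uniform samples to detect binariness of the random compact limit from binariness of the finite trees.
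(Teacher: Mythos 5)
Your argument is correct and follows exactly the route the paper intends: the paper does not spell out a proof for this corollary, merely saying it is a special case of Theorem \ref{thm:mainEIP} obtained via Lemma \ref{lemma:bbtree}, in parallel to how Corollary \ref{thm:mainEST} is obtained from Lemma \ref{lemma:bst}. You have filled in precisely the implicit steps -- using part (ii) of the lemma to land in $\cMB$, part (iii) to force the a.s.\ limit and the parametrizing $K$ into $\bBTi$, the face/extremality bookkeeping to match $\ex(\cMB)$ with the image, and part (i) plus the existing homeomorphism to get the topological statement -- so this is the same approach, just made explicit.
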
 

	\begin{remark}
		One could introduced exchangeable \emph{binary} hierarchies on $\bN$: some exchangeable hierarchy $H=(H_n)_{n\in\bN}$ on $\bN$ is called binary if every $H_n$ is binary as a tree. One could have obtained as a consequence of Corollary \ref{thm:mainEBT} the exact analogue of Corollary \ref{thm:mainEIHier} with $\bBTi$ instead of $\bSTi$.  
	\end{remark}

	\subsection{Martin boundaries and limits of ordered discrete structures}
	
	We will give a very short definition of Martin boundary that is adapted best to our already used choice of symbols and refer the reader to \cite{choi2017doob, egw2, gerstenberg, vershik2015equipped} for more details. We introduce this concept for interval systems first and then relate this to Martin boundaries associated with Schr{\"o}der trees and finally with binary trees.  
	
	For any $1\leq k\leq n$ let $\vec{J}$ be a random vector uniformly distributed on $[k:n]$. For any $\hI_k\in\bIS(k)$ and $\hI_n\in\bIS(n)$ define 
	\begin{equation}\label{eq:backwardsprob}
	\gamma(\hI_k,\hI_n):=\bP(\phi^n_k(\hI_n,\vec{J})=\hI_k)=\frac{1}{\binom{n}{k}}\#\Big\{\vj\in[k:n]:\phi^n_k(\hI_n,\vj)=\hI_k\Big\}
	\end{equation}
	and for $k>n$ set $\gamma(\hI_k,\hI_n):=0$. The value $\gamma(\hI_k,\hI_n)$ is obtained by counting how often the smaller interval system $\hI_k$ is \emph{embedded} into the larger interval system $\hI_n$ and divides this amount by the maximal possible number of such embeddings. One can think of $\gamma(\hI_k,\hI_n)$ to be the \emph{density of the small $\hI_k$ in the large $\hI_n$}. This interpretation is in line with a emerging field in the area of limits of combinatorial objects, most famously discussed for graph limits. The connection to exchangeability and related areas is a commonly used tool that helps to understand the limiting behaviors of such density numbers as the size of the large object tends to infinity, see \cite{dija, austin, lovasz, hkmrs}. 
	
	The object $\gamma$ and erased-interval processes are linked as follows: For any erased-interval process $(I_n,\eta_n)_{n\in\bN}$ the first coordinate process $(I_n)_{n\in\bN}$ is a \emph{Markov chain with co-transition probabilities $\gamma$}, that is $\bP(I_k=\hI_k|I_n=\hI_n)=\gamma(\hI_k,\hI_n)$ for all $1\leq k\leq n, \hI_k\in\bIS(k)$ and $\hI_n\in\bIS(n)$ with $\bP(I_n=\hI_n)>0$. By Kolmogorov's extension theorem the opposite is true in the following sense: To any Markov chain $(\hat I_n)_{n\in\bN}$ with $\hat I_n\in\bIS(n)$ and co-transition probabilities given by $\gamma$ there is a unique (in law) erased-interval process $(I_n,\eta_n)_{n\in\bN}$ such that $(\hat I_n)_{n\in\bN}$ and $(I_n)_{n\in\bN}$ have the same distribution. 
	
	Given any sequence $(\hI_n)_{n\in\bN}$ of interval systems with $\hI_n\in\bIS(m_n)$ for some sequence $m_n\rightarrow\infty$ one says that this sequence is \emph{$\gamma$-convergent} iff $\gamma(\hI,\hI_n)$ converges as $n\rightarrow\infty$ for every $k,\hI\in\bIS(k)$. We think of the pointwise defined functions $\lim\nolimits_{n\rightarrow\infty}\gamma(\cdot,\hI_n):\bigcup\nolimits_{k\geq 1}\bIS(k)\rightarrow[0,1]$ to be the limit objects associated to $\gamma$-convergent sequences. The set of all functions $\bigcup\nolimits_{k\geq 1}\bIS(k)\rightarrow[0,1]$ obtainable in this way constitutes the \emph{Martin boundary associated to $\gamma$}. This Martin boundary can be described equivalently as a set of laws: For any $\gamma$-convergent sequence $(\hI_n)_{n\in\bN}$ there exists a unique (in law) erased-interval process $(I_n,\eta_n)_{n\in\bN}$ such that 
	\begin{equation}\label{eq:convergence}
	\bP(I_k=\hI_k)=\lim\nolimits_{n\rightarrow\infty}\gamma(\hI_k,\hI_n)~~\text{for all}~k\in\bN,\hI_k\in\bIS(k).
	\end{equation}
	This again is a consequence of Kolmogorov's existence theorem. We identify the Martin boundary associated to $\gamma$ with the set of all laws of EIPs that fulfill (\ref{eq:convergence}) for some $\gamma$-convergent sequence and we define this set as $\partial(\cMI)$. So in particular, $\partial(\cMI)\subseteq\cMI$. General theory yields that $\partial(\cMI)$ is always a closed subset of $\cMI$ and that every extreme point of $\cMI$ is a point in the Martin boundary as well, so $\ex(\cMI)\subseteq \partial(\cMI)\subseteq\cMI$. It is often the case that extreme points and Martin boundary coincide and this is also the case here: We will not present a proof of this fact but direct the reader to \cite{gerstenberg}, Lemma $4.2.$. There it was shown that the Martin boundary and the set of extreme points coincide in the context of (general) erased-word processes. The proof given there only uses exactly the same features that are present for interval systems. In fact, the proof presented in \cite{gerstenberg} was largely inspired by the proof presented in \cite{egw2} showing that Martin boundary and extreme points coincide in the context of R\'emy's tree growth chain. Given the equality of extreme points and Martin boundaries we directly obtain the following corollary to Theorem \ref{thm:mainEIP}:
	
	\begin{corollary}
		Let $(U_i)_{i\in\bN}$ be a $U$-process. For any $\gamma$-convergent sequence $(\hI_n)_{n\in\bN}$ of interval systems there exists a unique $K\in\bISi$ such that 
		$$\lim\nolimits_{n\rightarrow\infty}\gamma(\hI,\hI_n)=\bP(\phi^{\infty}_k(K,U_{1:k},\dots,U_{k:k})=\hI)$$
		holds for every $k\in\bN, \hI\in\bIS(k)$. This map $\bISi\rightarrow \partial(\cMI)$ yields a homeomorphic description of the Martin boundary of interval systems with respect to $\gamma$.
	\end{corollary}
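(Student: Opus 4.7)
The plan is to obtain the corollary as a direct consequence of Theorem \ref{thm:mainEIP} combined with the already cited equality $\ex(\cMI)=\partial(\cMI)$, using nothing beyond Kolmogorov's extension theorem and the almost sure representation of EIPs. The essential observation is that a $\gamma$-convergent sequence is, by definition, exactly the data needed to pin down a unique law in $\partial(\cMI)$, and then Theorem \ref{thm:mainEIP} translates that law into a unique $K\in\bISi$.

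First I would establish existence and uniqueness of $K$. Let $(\hI_n)_{n\in\bN}$ be $\gamma$-convergent. The limits $p_k(\hI):=\lim_{n\to\infty}\gamma(\hI,\hI_n)$ form a consistent family of probability measures on the spaces $\bIS(k)$ in the sense of Kolmogorov: consistency under the co-transitions $\gamma$ is preserved under pointwise limits since the co-transitions are finite convex combinations. Applying Kolmogorov's extension theorem together with the Markov structure induced by $\gamma$ (as noted in the excerpt, every Markov chain with co-transitions $\gamma$ extends uniquely in law to an EIP via the prescribed uniform eraser coordinates) produces a unique $P\in\cMI$ with $P(I_k=\hI)=p_k(\hI)$ for all $k,\hI$. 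By definition $P\in\partial(\cMI)$, and by the cited equality $\partial(\cMI)=\ex(\cMI)$. Theorem \ref{thm:mainEIP} then supplies a unique $K\in\bISi$ with $\law(K)=P$. To verify the displayed formula, let $(I,\eta)$ be an EIP with law $\law(K)$ and $(U_i)_{i\in\bN}$ the $U$-process corresponding to $\eta$; since $\law(K)$ is ergodic, the a.s.\ limit $I_\infty=\lim_n n^{-1}I_n$ is the deterministic element $K$, and Theorem \ref{thm:mainEIP} yields
\begin{equation*}
\bP\bigl(I_k=\hI\bigr)=\bP\Bigl(\phi^{\infty}_k(K,U_{1:k},\dots,U_{k:k})=\hI\Bigr),
\end{equation*}
which matches $p_k(\hI)$ by construction.

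Second, I would address the homeomorphism claim. The map $\bISi\to\ex(\cMI)$, $K\mapsto\law(K)$, is a homeomorphism by Theorem \ref{thm:mainEIP}, and by the equality $\ex(\cMI)=\partial(\cMI)$ the same map is a homeomorphism onto $\partial(\cMI)$ equipped with its subspace topology inherited from $\cMI$. The preceding paragraph identifies this map with the assignment that sends a $\gamma$-convergent sequence (via its limit function) to the corresponding element of $\bISi^{-1}$; so the map in the statement is precisely the inverse of the Theorem \ref{thm:mainEIP} homeomorphism restricted to $\partial(\cMI)$, hence a homeomorphism itself.

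The only genuine subtlety is the appeal to $\ex(\cMI)=\partial(\cMI)$, which the excerpt explicitly defers to \cite{gerstenberg, egw2}; once that is taken as given, all remaining steps are bookkeeping on top of Theorem \ref{thm:mainEIP}. A minor point I would check carefully is that the Kolmogorov extension from the marginal family $p_k$ really does land inside $\cMI$ (so that the eraser marginals are uniform and independent of the future $\sigma$-field), but this is forced because the co-transitions $\theta$ of (\ref{eq:theta}) factor the $\gamma$ co-transitions through uniform erasers, so any Markov chain $(\hat I_n)$ with co-transitions $\gamma$ lifts uniquely in law to a Markov chain on $\bIS(n)\times[n+1]$ with co-transitions $\theta$, i.e.\ to an EIP.
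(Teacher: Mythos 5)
Your proposal follows the paper's route exactly: the corollary is derived from Theorem \ref{thm:mainEIP} together with the cited equality $\ex(\cMI)=\partial(\cMI)$, using Kolmogorov's extension theorem to identify a $\gamma$-convergent sequence with a unique law in $\partial(\cMI)$ and then invoking the bijection $K\mapsto\law(K)$. The only blemish is a slip at the end: the map $\bISi\rightarrow\partial(\cMI)$ named in the statement \emph{is} $K\mapsto\law(K)$ (the Theorem \ref{thm:mainEIP} homeomorphism, with codomain relabelled via $\ex(\cMI)=\partial(\cMI)$), not its inverse; what sends a $\gamma$-convergent sequence to $K$ is the inverse, so the two sentences should not be conflated, but this does not affect the substance.
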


	Since Schr{\"o}der trees and binary trees are both stable under removing elements according to $\phi^n_k$ one directly obtains the following
	
	\begin{corollary}
		Let $(U_i)_{i\in\bN}$ be a $U$-process. For any $\gamma$-convergent sequence $(\hT_n)_{n\in\bN}$ of Schr{\"o}der trees there exists a unique $K\in\bSTi$ such that 
		$$\lim\nolimits_{n\rightarrow\infty}\gamma(\hT,\hT_n)=\bP(\phi^{\infty}_k(K,U_{1:k},\dots,U_{k:k})=\hT)$$
		holds for every $k\in\bN, \hT\in\bST(k)$. This map $\bSTi\rightarrow \partial(\cMS)$ yields a homeomorphic description of the Martin boundary of Schr{\"o}der trees with respect to $\gamma$.
	\end{corollary}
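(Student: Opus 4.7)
The plan is to deduce this corollary from the preceding one (the Martin boundary description for interval systems) by restricting the homeomorphism $\bISi \to \partial(\cMI)$ to the closed subset $\bSTi$ and verifying that this restriction has image exactly $\partial(\cMS)$.

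First, I view $(\hT_n)_{n\in\bN}$ as a $\gamma$-convergent sequence of interval systems via $\bST(m_n)\subseteq\bIS(m_n)$ and apply the preceding corollary to obtain a unique $K\in\bISi$ satisfying the stated limit formula. The key step is to verify $K\in\bSTi$. Let $(I_n,\eta_n)_{n\in\bN}$ be the ergodic EIP associated with $K$ via Theorem \ref{thm:mainEIP}, so that $\bP(I_k=\hI)=\lim_n\gamma(\hI,\hT_n)$ for every $k\in\bN$ and every $\hI\in\bIS(k)$. Since $\phi^n_k$ maps Schröder trees to Schröder trees, one has $\gamma(\hI,\hT_n)=0$ for any $\hI\in\bIS(k)\setminus\bST(k)$, hence $\bP(I_k\in\bST(k))=1$ for every $k$. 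By Lemma \ref{lemma:bst}, iii), this gives $n^{-1}I_n\in\bSTi$ almost surely. Theorem \ref{thm:mainEIP} yields $n^{-1}I_n\to K$ almost surely in $(\bISi,\dha)$, and since $\bSTi$ is a closed subset of $\bISi$ by Lemma \ref{lemma:bst}, i), the almost sure limit $K$ lies in $\bSTi$. Uniqueness of $K$ is inherited from the injectivity of the map $\bISi\to\partial(\cMI)$.

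For the homeomorphism claim, the map $\bSTi\to\partial(\cMS)$, $K\mapsto\law(K)$, is the restriction of the homeomorphism $\bISi\to\partial(\cMI)$ from the preceding corollary, so it is continuous and injective. Surjectivity onto $\partial(\cMS)$ is immediate from the existence argument above: every law in $\partial(\cMS)$ arises from a $\gamma$-convergent sequence of Schröder trees and therefore corresponds to some $K\in\bSTi$. Since $\bSTi$ is compact as a closed subset of the compact space $(\bISi,\dha)$, a continuous bijection from $\bSTi$ onto the Hausdorff space $\partial(\cMS)$ is automatically a homeomorphism.

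The main obstacle is the step $K\in\bSTi$; the remaining ingredients are essentially bookkeeping built on the preceding corollary and Lemma \ref{lemma:bst}. That obstacle is resolved cleanly by combining the almost sure Hausdorff convergence $n^{-1}I_n\to K$ from Theorem \ref{thm:mainEIP} with the closedness of $\bSTi$ in $\bISi$ and the stability of Schröder trees under the finite restriction maps $\phi^n_k$, which forces all $I_n$ to lie almost surely in $\bST(n)$ and thus funnels the limit into $\bSTi$.
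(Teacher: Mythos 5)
Your argument is correct, and it fleshes out nicely what the paper dispatches with the one-liner ``since Schr\"oder trees are stable under removing elements according to $\phi^n_k$ one directly obtains\ldots''. The two routes are not quite identical. The paper's implicit proof is: re-invoke the equality $\ex(\cMS)=\partial(\cMS)$ (by the same argument cited from \cite{gerstenberg} for interval systems, which only uses stability under $\phi^n_k$) and then combine with the homeomorphism $\bSTi\rightarrow\ex(\cMS)$ of Corollary~\ref{thm:mainEST}. You instead take the interval-system Martin boundary corollary as a black box, use stability of $\bST$ under $\phi^n_k$ to force $\gamma(\hI,\hT_n)=0$ for $\hI\notin\bST(k)$, so that the ergodic EIP associated with the limit has $I_k\in\bST(k)$ a.s., and then funnel $K=\lim n^{-1}I_n$ into $\bSTi$ via Lemma~\ref{lemma:bst}~(iii) and closedness of $\bSTi$ (Lemma~\ref{lemma:bst}~(i)). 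This gives the surjection $\bSTi\twoheadrightarrow\partial(\cMS)$ without re-establishing the extreme-points-equal-boundary fact for Schr\"oder trees; rather, you recover that equality as a byproduct. The payoff of your route is that it is self-contained relative to the interval-system corollary and requires no appeal to analogy; the cost is that it is slightly longer than the one the paper gestures at.

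One small point worth making explicit: to know that the map $K\mapsto\law(K)$ is even well-defined as a map \emph{into} $\partial(\cMS)$ for \emph{every} $K\in\bSTi$ (not just those arising from some given $\gamma$-convergent sequence of Schr\"oder trees), invoke Corollary~\ref{thm:mainEST} to get $\law(K)\in\ex(\cMS)$ together with the general inclusion $\ex(\cMS)\subseteq\partial(\cMS)$. You use this implicitly; stating it closes the loop. The compactness-plus-Hausdorff argument at the end is exactly right.
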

	
	\begin{corollary}\label{thm:mbb}
		Let $(U_i)_{i\in\bN}$ be a $U$-process. For any $\gamma$-convergent sequence $(\hT_n)_{n\in\bN}$ of binary trees there exists a unique $K\in\bBTi$ such that 
		$$\lim\nolimits_{n\rightarrow\infty}\gamma(\hT,\hT_n)=\bP(\phi^{\infty}_k(K,U_{1:k},\dots,U_{k:k})=\hT)$$
		holds for every $k\in\bN, \hT\in\bBT(k)$. This map $\bBTi\rightarrow \partial(\cMB)$ yields a homeomorphic description of the Martin boundary of binary trees with respect to $\gamma$.
	\end{corollary}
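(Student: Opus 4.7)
The plan is to combine Corollary \ref{thm:mainEBT} (the homeomorphism $\bBTi \to \ex(\cMB)$ via $K \mapsto \law(K)$) with the general principle that, in this projective-system setting, the Martin boundary coincides with the set of extreme points, $\partial(\cMB) = \ex(\cMB)$. Once both of these are in hand, the corollary follows by composition: a $\gamma$-convergent sequence of binary trees determines a point of $\partial(\cMB)$, hence of $\ex(\cMB)$, hence a unique $K \in \bBTi$.

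More concretely, first I would start from a $\gamma$-convergent sequence $(\hT_n)_{n\in\bN}$ with $\hT_n \in \bBT(m_n)$ and $m_n \to \infty$. The pointwise limits $\hT \mapsto \lim_n \gamma(\hT,\hT_n)$ are consistent with the co-transition kernel $\gamma$, so Kolmogorov's extension theorem produces a Markov chain $(I_k)_{k\in\bN}$ with $I_k \in \bIS(k)$, co-transitions $\gamma$, and marginals given by these limits; attaching to it the canonically coupled eraser process yields an EIP whose law lies in $\cMI \cap \partial(\cMI) = \cMI \cap \ex(\cMI)$ (using the equality cited from \cite{gerstenberg}). Because each $\hT_n$ is binary and binarity is preserved by $\phi^n_k$ at the finite level (the same check behind Lemma \ref{lemma:bbtree} ii)), $\gamma(\hI,\hT_n) = 0$ whenever $\hI$ is non-binary, so every marginal of $(I_k)$ is supported on $\bBT(k)$; thus $(I,\eta)$ is actually an erased-binary tree process, and its law is in $\cMB$. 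Since $\cMB$ is a closed face in $\cMI$, the relation $\ex(\cMI) \cap \cMB = \ex(\cMB)$ places this law in $\ex(\cMB) = \partial(\cMB)$. Applying the homeomorphism from Corollary \ref{thm:mainEBT} produces the unique $K \in \bBTi$ with $\law(K)$ equal to this limit law, which by definition of $\law(K)$ means precisely $\lim_n \gamma(\hT,\hT_n) = \bP(\phi^{\infty}_k(K,U_{1:k},\dots,U_{k:k}) = \hT)$ for all finite $\hT \in \bBT(k)$.

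Finally, I need the homeomorphism assertion. The map $\bBTi \to \partial(\cMB)$ is just the composition of the homeomorphism $\bBTi \to \ex(\cMB)$ from Corollary \ref{thm:mainEBT} with the equality $\ex(\cMB) = \partial(\cMB)$, so bijectivity and bicontinuity are immediate once we equip $\partial(\cMB)$ with its natural topology (pointwise convergence of $\hT \mapsto \gamma^*(\hT)$, equivalent to weak convergence of the associated Markov laws). The main obstacle is the equality $\partial(\cMB) = \ex(\cMB)$: the inclusion $\ex \subseteq \partial$ is general simplex theory, while the reverse inclusion requires adapting the argument from \cite[Lemma 4.2]{gerstenberg} (itself modeled on \cite{egw2}). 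That adaptation only uses that (a) the co-transitions $\gamma$ factor through the uniformly random $\vec{J} \in [k\!:\!n]$, (b) the state space is stable under the multi-step erasure maps $\phi^n_k$, and (c) the $U$-sampling representation of Corollary \ref{thm:mainEBT} identifies extreme laws with deterministic $K \in \bBTi$; all three hold here. Everything else is bookkeeping, and the Schr\"oder-tree corollary stated just above binds together with this one by exactly the same template.
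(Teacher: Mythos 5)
Your proposal is correct and follows essentially the same route as the paper: the paper likewise obtains this corollary directly by combining the homeomorphism of Corollary~\ref{thm:mainEBT}, the stability of $\bBT(n)$ under $\phi^n_k$ (so $\gamma$-limits of binary trees stay supported on binary trees, and $\cMB$ is a closed face of $\cMI$), and the equality $\partial = \ex$ cited from \cite{gerstenberg}. You have merely spelled out the bookkeeping that the paper compresses into ``one directly obtains the following.''
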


	\begin{exmp}
		In \cite{egw2} two examples of $\gamma$-convergent sequences of binary trees $(\hT_n)_{n\in\bN}$ were considered: 
		\begin{enumerate}
			\item Spine trees: $\hT_n\in\bBT(n)$ is binary of hight $n-1$ that grows from the root left-right-left-right-\dots.
			\item Complete trees: $\hT_n\in\bBT(2^n)$ is the complete binary tree of hight $n$. 
		\end{enumerate}
		Both sequences are $\gamma$-convergent. The limit of spine trees is given by $$K=\{(x,1-x):0\leq x\leq 0.5\}\cup\diago{}$$ and the limit of complete trees is given by $$K=\bigcup_{n\geq 0}\Big\{\left(0,\frac{1}{2^n}\right),\left(\frac{1}{2^n},\frac{2}{2^n}\right),\left(\frac{2}{2^n},\frac{3}{2^n}\right),\dots,\left(\frac{2^n-1}{2^n},1\right)\Big\}\cup\diago{}.$$
		\end{exmp}

	\begin{remark}
		A general theory that can be applied to prove the equality of Martin boundaries and extreme points in the present situations is presented in the author's Ph.D. thesis \cite{gerstenbergdiss}. 
	\end{remark}

	\begin{remark}
		One should be able to prove that a sequence $(\hI_n)_{n\in\bN}$ with $\hI_n\in\bIS(m_n)$ and $m_n\rightarrow\infty$ is $\gamma$-convergent iff $m_n^{-1}\hI_n$ converges in $(\bISi,\dha)$. 
	\end{remark}

	Next we present R\'emy's tree growth chain and translate the notion of Martin boundary used in \cite{egw2} to our situation. R\'emy's tree growth chain is a Markov chain $(T_n)_{n\in\bN}$ with $T_n\in\bBT(n)$ for every $n$, that can be obtained as follows: $T_1\in\bBT(1)$ is the unique binary tree consisting only of a root vertex. The transitions are as follows: Given some binary tree $\hT\in\bBT(n)$ one chooses one of the $2n-1$ nodes in $\hT$ uniform at random. Let $v$ be that node. Then one cuts the subtree with root $v$ off and puts it aside. At the position of $v$ one places the unique binary tree with two leaves (the 'cherry'). The put-aside subtree then is placed at one of the two leaves of the cherry, chosen with equal probability. The resulting tree is binary by construction and has $n+1$ leaves. The resulting process $T=(T_n)_{n\in\bN}$ is R\'emy's tree growth chain. In \cite{egw2} it was shown that $T$ has co-transition probabilities $\gamma$ and each $T_n$ is \emph{uniform} on $\bBT(n)$. The Martin boundary associated to R\'emy's Tree Growth chain described in \cite{egw2} is equivalent to the above introduced Martin boundary of binary trees associated to $\gamma$. To obtain a description of that Martin boundary the authors first used the Kolmogorov existence theorem to construct \emph{labeled infinite R\'emy bridges}. Such an object is basically a process $(T_n,S_n)_{n\in\bN}$ according to an erased-binary tree process $(T_n,\eta_n)_{n\in\bN}$ in which $S$ is the permutation process according to $\eta$. The variables $\eta$ appeared in their work as well, but were called '$L_i$' instead of '$\eta_i$', see \cite[Lemma 5.3.]{egw2}. Arriving at labeled infinite R\'emy bridges (equivalent to erased-binary tree processes) they constructed what they called \emph{exchangeable didendritic systems}, certain infinite combinatorial exchangeable objects, see \cite[Definition 5.8.]{egw2}. They then reduced the problem of describing the Martin boundary to the task of describing ergodic didendritic systems by showing that Martin boundary and extreme points coincide. For every ergodic didendritic system they introduce certain indicator arrays that inherit exchangeability and then employ the classical de Finetti theorem to obtain almost sure convergence. The resulting almost sure limits were then used to construct a \emph{binary real tree with a probability measure concentrated on the leafs} ('Rt' for short). The construction of that Rt basically depends on $(S_n(T_n))_{n\in\bN}$ alone, which  according to Theorem \ref{thm:mainMAP} is an exchangeable (binary) hierarchy on $\bN$. Consequently, the construction of the Rt takes place in the same situation as it has been done in \cite{fohapi}, which explains the resemblances of the two papers. However, in contrast to \cite{fohapi}, the random finite objects under consideration are now \emph{ordered} trees, the transition from $\cL((T_n,S_n)_{n\in\bN})$ to $\cL((S_n(T_n))_{n\in\bN})$ is not injective. As a consequence, the constructed Rt is not sufficient to distinguish all points in the Martin boundary associated to binary trees. The missing information was described in \cite[Section 8]{egw2} and used an higher order Aldous-Hoover-Kallenberg representation result. However, higher-order randomization is not really needed, see \cite[Lemma 8.1.]{egw2}. The final representation result they obtain is a surjective description of the Martin boundary of R\'emy's tree growth chain, see \cite[Theorem 8.2.]{egw2}. A similar approach is applied in \cite{choi2017doob, evans2016radix} to describe Martin boundaries in different situations. 
	
	\subsection{Compositions}
	
	We present a further examples of a nicely embedded substructure: compositions. In \cite{gnedin} \emph{exchangeable compositions of $\bN$} have been analyzed. A composition of $\bN$ is a tuple $C=(P,l)$, where $P=\{e_1,e_2,\dots\}$ is a partition of $\bN$ and $l$ is a linear order on $P$. Given any finite bijection $\pi\in\bS_{\infty}$ one can relabel the composition: $\pi(C):=(\pi(P),\pi(l))$ where $\pi(P)=\{\pi(e_1),\pi(e_2),\dots\}$ and $\pi(e_i)\pi(l)\pi(e_j):\Leftrightarrow e_ile_j$. Now $\pi(C)$ is a composition as well. One can topologize the space of compositions via finite restrictions: Given any $n\in\bN$ one introduces $C_{|n}$ where the partition is restricted to $[n]$ and the linear order is restricted to the images of the remaining partition blocks. An \emph{exchangeable composition of $\bN$} is a random composition $\Pi$ of $\bN$ such that $\pi(\Pi)\sim\Pi$ for every $\pi\in\bS_{\infty}$. Let $\cMC$ be the simplex of laws of exchangeable compositions. Gnedin \cite{gnedin} obtains a homeomorphic description of $\ex(\cMC)$ in terms of open subsets of the unit interval. The set of open subsets was topologized in an explicit way that turns the description into a homeomorphic one.

	This is linked to erased-interval processes as follows: An \emph{interval partition} of $[n]$ is an element $\hP\in\bIS(n)$ such that all non-singleton intervals $[a,b],[c,d]\in\hP$ are disjoint. Let $\bIP(n)$ be the set of all interval partitions of $[n]$. As above, the family $\bIP(n), n\in\bN$ is stable under sampling via $\phi^n_k$. In \cite[p. 1439]{gnedin} it is explained that describing the simplex $\cMC$ is equivalent (affinely homeomorphic) to describing the simplex of all laws of Markov chains of growing interval-partitions $(P_n)_{n\in\bN}$ with co-transition probabilities $\gamma$. Now this simplex of laws is equivalent to the simplex of laws of erased-interval partition processes. The latter can be described by the subspace $\bIP(\infty)\subseteq \bISi$ consisting of all $K$ such that for all $(x,y),(x',y')\in K$ with $x<y$ and $x'<y'$ it holds that either $y\leq x'$ or $y'\leq x$. The homeomorphism is analogous to the Corollaries \ref{thm:mainEST} and \ref{thm:mainEBT}. Now every open subset $U$ of the unit interval is a countable union of disjoint open intervals, $U=\cup_{i}(x_i,y_i)$. If one reads every $(x_i,y_i)$ as a point in $\recto{}$ and one then passes from $U$ to $\{(x_i,y_i):i\}\cup\diago{}$ one obtains a homeomorphism between the space describing $\ex(\cMC)$ in \cite{gnedin} and the space $(\bIP(\infty),\dha)$. 
	
	\subsection{Outlook}
	
	Let $T=(T_n)_{n\in\bN}$ be R\'emy's tree growth chain. It is part of our main theorem that $n^{-1}T_n$ converges almost surely in the space $(\bSTi,\dha)$ towards some $\bBTi$-valued random variable $T_{\infty}$. 
	
	\begin{definition}
		$T_{\infty}$ is called the \emph{Brownian binary tree on $(0,1)$}.
	\end{definition}
	
	It is a well known fact that the normalized \emph{exploration paths} associated with $(T_n)_{n\in\bN}$ converge almost surely towards a (the same for all) Brownian excursion $E$, see \cite{marchal} and \cite{mm}. In particular, the law of R\'emy's tree growth chain is not ergodic, since Brownian excursion is not degenerate. It is known that the terminal $\sigma$-field generated by $T$ is almost surely equal to the $\sigma$-field generated by $E$ and the same is true for the $\sigma$-field generated by $T_{\infty}$. Hence $\sigma(T_{\infty})=\sigma(E)$ almost surely: One can express $T_{\infty}$ as a measurable function of $E$ and vice versa, almost surely. In future work we aim to describe this in detail. We plan to compare the map $\phi^{\infty}_k:\bSTi\times[0,1]^k_<\rightarrow\bST(k)$ with maps used to build trees from \emph{excursions} (see \cite[Chapter III, Section 3.]{legall}). Excursions are commonly used to describe real trees, we will present a way to describe certain Schr{\"o}der trees on $(0,1)$ by excursion and compare these to the real tree obtained from the same excursion.
	
	We plan to investigate some possible universality properties of (the law of) $T_{\infty}$: There are well known connections of large uniform pattern avoiding permutations and Brownian excursion (see \cite{horisl}), which may be explained with the help of the Brownian tree on $(0,1)$. We finish our considerations concerning trees with the following conjecture, which is due to a comparison of the left hand side of Fig. \ref{fig:phiinftyB} and \cite[Figure 3]{horisl}: 
	
	\begin{conjecture}
		Let $n\in\bN$ and $S_n$ be a uniform $231$-avoiding permutation of $[n]$. Consider the normalized graph of $S_n$, defined as $\text{gr}(S_n):=\{(i/n,S_n(i)/n):i\in[n]\}$. We consider $\text{gr}(S_n)$ as a random compact set, thus taking values in the space $(\cK([0,1]^2),\dha)$. As $n\rightarrow\infty$ the sequence $\text{gr}(S_n)$ converges in law towards the Brownian binary tree on $(0,1)$, where convergence in law is with respect to the Hausdorff topology $\dha$ on $\cK([0,1]^2)$.  
	\end{conjecture}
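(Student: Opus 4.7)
The plan is to exploit the classical recursive bijection between $231$-avoiding permutations and binary trees to reduce the conjecture to Corollary~\ref{thm:mainEBT}. Given $\pi\in\text{Av}_n(231)$ with $\pi(i_1)=n$, the $231$-avoiding condition forces the sub-word $\pi(1)\cdots\pi(i_1-1)$ to be a $231$-avoiding permutation of $\{1,\dots,i_1-1\}$ and $\pi(i_1+1)\cdots\pi(n)$ to be one of $\{i_1,\dots,n-1\}$; recursing on these sub-words and declaring the gap between leaves $i_1$ and $i_1+1$ to be the root-split yields a bijection $\Psi_n:\text{Av}_n(231)\to\bBT(n+1)$. Hence if $S_n$ is uniform on $\text{Av}_n(231)$, then $T_{n+1}:=\Psi_n(S_n)$ is uniform on $\bBT(n+1)$ and thereby has the marginal law of R\'emy's tree growth chain. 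Corollary~\ref{thm:mainEBT} then yields $(n+1)^{-1}T_{n+1}\to T_{\infty}$ in law in $(\bBTi,\dha)$, and \emph{a fortiori} in $(\cK([0,1]^2),\dha)$ after the inclusion $\recto{}\subseteq [0,1]^2$.

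It remains to show that $\dha\bigl(\text{gr}(S_n),(n+1)^{-1}T_{n+1}\bigr)\to 0$ in probability. Under $\Psi_n$ each position $i\in[n]$ is in bijection with the unique internal node $v_i$ of $T_{n+1}$ whose recursive split falls at gap $i$; writing $[a,b]$ for the leaf interval of $v_i$ and $[L,R],[L',R']$ for the position and value ranges of the sub-permutation at $v_i$, one has $a=L$, $b=R+1$, $S_n(i)=R'$, $i=g_{v_i}\in[L,R]$ and $L-L'=R-R'=d_{v_i}$, where $d_{v_i}$ counts right-child moves along the root-to-$v_i$ path in $T_{n+1}$. A short computation shows that the graph point $(i/n,R'/n)$ attached to $v_i$ lies within distance $O((d_{v_i}+1)/n)$ of the tree point attached to the \emph{right child} of $v_i$ in $(n+1)^{-1}T_{n+1}$, interpreted as a near-diagonal leaf point when the right subtree is a single leaf. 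This handles the ``graph-into-tree'' half of the Hausdorff comparison.

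The main obstacle is the reverse direction: tree points corresponding to internal nodes that are \emph{left} children of their parent must also be shown to admit nearby graph points. A natural candidate for a left-child internal node $v$ is a graph point at some position on the leftmost spine of $v$'s subtree, but matching coordinates requires controlling the cumulative right-depth along the descent. Closing this gap appears to require combining the almost-sure convergence of $(n+1)^{-1}T_{n+1}$ to $T_\infty$ established here with quantitative bounds showing that, under $\unif(\bBT(n+1))$, both the height and the maximal right-depth grow only of order $\sqrt{n}$ (a consequence of the convergence of uniform binary trees to the Brownian CRT \cite{marchal,mm}), together with a detailed analysis of the recursive structure of $231$-avoiding permutations in the spirit of \cite{horisl}. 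Tightness of $\cL(\text{gr}(S_n))$ in $(\cK([0,1]^2),\dha)$ is automatic by compactness of $[0,1]^2$, so the substantive task is the identification of the limit law as $\cL(T_{\infty})$.
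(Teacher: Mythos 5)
The statement you are addressing is presented in the paper as an open \emph{conjecture}; the author explicitly writes that this comparison ``is due to a comparison of the left hand side of Fig.~\ref{fig:phiinftyB} and \cite[Figure 3]{horisl}'' and defers the details to future work. There is therefore no proof in the paper to compare against, and the relevant question is whether your proposal actually closes the conjecture.

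Your overall plan is the natural one, and the two ingredients you assemble are correct as far as they go: the recursive position-of-the-maximum bijection $\Psi_n:\text{Av}_n(231)\to\bBT(n+1)$ does push the uniform law on $231$-avoiders forward to the uniform law on $\bBT(n+1)$, which is the one-dimensional marginal of R\'emy's chain, and Corollary~\ref{thm:mainEBT} then yields $(n+1)^{-1}T_{n+1}\Rightarrow T_\infty$ in $(\bBTi,\dha)$. The reduction to showing $\dha\bigl(\text{gr}(S_n),(n+1)^{-1}T_{n+1}\bigr)\to 0$ in probability is exactly the right way to exploit this. Your bookkeeping for the ``graph-into-tree'' half, matching the graph point at position $i$ with the tree point of the right child of the internal node $v_i$ up to an error $O((d_{v_i}+1)/n)$ controlled by the right-depth, is also sound, and the uniform height bound $O_P(\sqrt{n})$ for uniform binary trees makes this error vanish uniformly.

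However, you yourself flag the genuine gap, and it is a real one: the ``tree-into-graph'' half of the Hausdorff comparison is not established. The tree points coming from internal nodes that are \emph{left} children of their parent have no directly paired graph point, and your candidate (descending the leftmost spine of the left subtree until the split produces a usable graph point) shifts the second coordinate downward by an amount you do not control. It is not obvious that this can be repaired merely by invoking the $O_P(\sqrt{n})$ height bound, because the obstruction is not a uniformly small additive error but the possible absence of \emph{any} graph point near a left-child tree point whose left subtree is macroscopic. One would need either (a) a sharper combinatorial identity showing each tree point of $(n+1)^{-1}T_{n+1}$ is within $o(1)$ of some graph point, or (b) an argument that the Hausdorff closure of the \emph{right-child} tree points already recovers all of $T_\infty$, which is a nontrivial statement about the limiting random compact set. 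As it stands the proposal identifies the correct reduction and the correct obstacle, but does not prove the conjecture; the paper itself also leaves it open.
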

	
	We were able to deduce certain results for subclasses of interval systems, since they are included in a 'nice way' that is somehow consistent with our most important operations, see Lemmas \ref{lemma:bst} and \ref{lemma:bbtree}. In future research one could look for more such 'nicely embedded' substructures of interval systems.

	\begin{acknowledgements}
		The author would like to thank his Ph.D. supervisor\\ Rudolf Gr{\"u}bel for many interesting discussions during the last years and for a lot of very helpful comments concerning this paper.
	\end{acknowledgements}


\begin{thebibliography}{99}
		\bibitem[ARS]{ARS} L. Alonso, J.L. Remy, R. Schott. "Uniform generation of a Schr{\"o}der tree". In: \emph{Inf.Process.Lett.} 64(1997), pp.305-308.
		
		\bibitem[Au]{austin} T. Austin. "On exchangeable random variables and the statistics of large graphs and hypergraphs". In: \emph{Probability Surveys} 5(2008), pp.80-145.
		
		\bibitem[BBI]{burago} D. Burago, Y. Burago and S. Ivanov. \emph{A Course in Metric Geometry}. American Mathematical Soc., 2001.
		
		\bibitem[CE]{choi2017doob} H. Choi and S.N. Evans. "Doob--Martin compactification of a Markov chain for growing random words sequentially". In: \emph{Stochastic Processes and their Applications} 7.127 (2017), pp.2428-2445.
		
		\bibitem[DHJ]{dhj} P. Diaconis, S. Holmes and S. Janson. "Interval Graph Limits". In: \emph{Annals of Combinatorics} 17 (2013), pp.27-52.
		
		\bibitem[DJ]{dija} P. Diaconis and S. Janson. "Graph Limits and Exchangeable Random Graphs". In: \emph{Rendiconti di Matematica} VII 28 (2008), pp.33-61.
		
		\bibitem[EGW]{egw2} S.N. Evans, R. Gr{\"u}bel and A. Wakolbinger. "Doob--Martin boundary of R{\'e}my’s tree growth chain". In: \emph{The Annals of Probability} 45.1 (2017), pp.225-277.
		
		\bibitem[EW]{evans2016radix} S.N. Evans and A. Wakolbinger. "Radix sort trees in the large". In: \emph{arXiv preprint arXiv:1603.07385} (2016).
		
		\bibitem[FHP]{fohapi} N. Forman, C. Haulk and J. Pitman. "A representation of exchangeable hierachies by sampling from random real trees". In: \emph{J. Probab. Theory Relat. Fields} (2017), \url{https://doi.org/10.1007/s00440-017-0799-4}.
		
		\bibitem[Ge18]{gerstenbergdiss} J. Gerstenberg. "Austauschbarkeit in Diskreten Strukturen: Simplizes und Filtrationen". Ph.D. thesis, Leibniz Univsersit{\"a}t Hannover, Germany. 2018.
		
		\bibitem[Ge17]{gerstenberg} J. Gerstenberg. "General Erased-Word Processes: Product-Type Filtrations, Ergodic Laws and Martin Boundaries". In: \emph{arXiv preprint arXiv:1712.00384} (2017).
		
		\bibitem[GGH]{GGH} J. Gerstenberg, R. Gr{\"u}bel, K. Hagemann. "A boundary theory approach to de Finetti's theorem". In: \emph{arXiv preprint arXiv:1610.02561} (2016).		 
		
		\bibitem[Gl]{glasner2003ergodic} E. Glasner. \emph{Ergodic theory via joinings}. American Mathematical Soc., 2003.
		
		\bibitem[Gn]{gnedin} A. Gnedin. "The representation of composition structures". In: \emph{The Annals of Probability} 25.3 (1997), pp.1437-1450.
		
		\bibitem[Gr]{gruebel} R. Gr{\"u}bel. "Persisting randomness in randomly growing discrete structures: Graphs
		and search trees". In: \emph{Discrete Math. Theor. Comput. Sci.} 18 (2015), pp.1-23.
		
		\bibitem[HRS]{horisl} C. Hoffman, D. Rizzolo and E. Slivken. "Pattern Avoiding Permutations and Brownian Excursion Part I: Shapes
		and Fluctuations". In: \emph{Random Structures and Algorithms} 50.3 (2017), pp.394–419.
		
		\bibitem[HKMRS]{hkmrs} C. Hoppen, Y. Kohayakawa, C.G. Moreira, B. Rath and R.M. Sampaioe. "Limits of permutation sequences". In: \emph{Journal of Combinatorial Theory, Series B} 103.1 (2013).
			
		\bibitem[Ka]{kallenberg} O. Kallenberg. \emph{Probabilistic symmetries and invariance principles}. Springer, New York. 2006.
			
		\bibitem[Ki]{kingman} J.F.C. Kingman. "The representation of partition structures". In: \emph{J. Lond. Math. Soc.(2)} 18.2 (1978), pp.374-380.
				
		\bibitem[La]{laurent2016filtrations} S. Laurent. "Filtrations of the erased-word processes". In: \emph{S{\'e}minaire de Probabilit{\'e}s XLVIII}. Springer, 2016, pp.445-458.
		
		\bibitem[LeG]{legall} J.-F. Le Gall. \emph{Spatial Branching Processes, Random Snakes and Partial Differential
			Equations}. Birkh{\"a}user, Basel. 1999.
		
		\bibitem[Le]{Leuridan} C. Leuridan. "Poly-adic filtrations, standardness, complementability and maximality". In: \emph{The Annals of Probability} 45.2 (2017), pp.1218-1246.
		
		\bibitem[Lo]{lovasz} L. Lovász. \emph{Large Networks and Graph Limits}. Amer. Math. Soc., Providence, RI, 2012. 
		
		\bibitem[Ma]{marchal} P. Marchal. "Constructing a sequence of random walks strongly converging to Brownian motion". In: \emph{Discrete Mathematics and Theoretical Computer Science AC} (2003), pp.181-190.
		
		\bibitem[MM]{mm} J.-F. Marckert and A. Mokkadem. "The depth first processes of Galton-Watson trees converge to the same Brownian excursion". In: \emph{The Annals of Probability} 31.3 (2003), pp.1655-1678.
		
		\bibitem[Mo]{Moore} J.I. Moore. "Interval hypergraphs and D-interval hypergraphs". In: \emph{Discrete Mathematics} 17 (1977), pp.173-179.
		
		\bibitem[Ph]{phelps} R.R. Phelps. \emph{Lectures on Choquet's theorem}. Springer Verlag Berlin Heidelberg, 2001. 
		
		\bibitem[Ve]{vershik2015equipped} A. Vershik. "Equipped graded graphs, projective limits of simplices, and their boundaries". In: \emph{Journal of Mathematical Sciences} 209.6 (2015), pp.860-873.
		
		\bibitem[vW]{weizsacker1983exchanging} H. von Weizs{\"a}cker. "Exchanging the order of taking suprema and countable intersections of $\sigma$-algebras". In: \emph{Annales de l'IHP Probabilit{\'e}s et statistiques}. Vol.19. 1. 1983, pp.91-100.
		
	\end{thebibliography}
\end{document}